\documentclass[12pt]{amsart}
\usepackage{amssymb,amsthm,amsmath,amstext}
\usepackage{mathrsfs}  
\usepackage{bm}        
\usepackage{mathtools} 

\usepackage[left=1.28in,top=.8in,right=1.28in,bottom=.8in]{geometry}

\usepackage{graphicx}
\usepackage[all]{xy}

\theoremstyle{plain}
\newtheorem{theorem}{Theorem}[section]
\newtheorem{prop}[theorem]{Proposition}
\newtheorem{lemma}[theorem]{Lemma}

\newtheorem{cor}[theorem]{Corollary}


\theoremstyle{definition}

\newtheorem{remark}[theorem]{Remark}
\newtheorem{assumptions}[theorem]{Assumptions}
\newcommand{\sheaf}[1]{\mathscr{#1}}

\renewcommand{\AA}{\sheaf{A}}

\newcommand{\PP}{\sheaf{P}}
\newcommand{\XX}{\sheaf{X}}
\newcommand{\YY}{\sheaf{Y}}

\newcommand{\QQ}{\sheaf{Q}}

\newcommand{\Brl}{{}_\ell\mathrm{Br}}
\newcommand{\Brn}{{}_n\mathrm{Br}}


\newcommand{\Z}{\mathbb Z}

\DeclareMathOperator{\Br}{\mathrm{Br}}

\usepackage{hyperref}
\usepackage{color}

\hypersetup{colorlinks=true,linkcolor=blue,anchorcolor=blue,citecolor=blue,letterpaper=true}

\begin{document}

\title[ Third Galois Cohomology group]
{ Third Galois Cohomology group of function fields of curves over number fields} 

\author[Suresh]{V.\ Suresh}
\address{Department of Mathematics \& Computer Science \\ %
Emory University \\ %
400 Dowman Drive~NE \\ %
Atlanta, GA 30322, USA}
\email{suresh@mathcs.emory.edu}

\date{}

\begin{abstract} Let $K$ be a  number field or a $p$-adic field 
and $F$ the function field of a curve over $K$. Let $\ell$ be a prime. 
Suppose that $K$ contains a primitive $\ell^{\rm th}$ root of unity.
If $\ell =  2$ and $K$ is a number field,  then assume that $K$  is a totally imaginary number field.
In this article we show that  every element in  $H^3(F, \mu_\ell^{\otimes 3})$ is a symbol.  
This leads to the finite generation of  the Chow group of zero-cycles on a quadric fibration   of a curve
over a totally imaginary number field.   
\end{abstract} 
 
\maketitle

\def\ZZ{${\mathbf Z}$}
\def\ih{${\mathbf H}$}
\def\RR{${\mathbf R}$}
\def\IF{${\mathbf F}$}
\def\QQ{${\mathbf Q}$}
\def\IP{${\mathbf P}$}

\section{Introduction} 
Let $F$ be a field and $\ell$ a prime not equal to the  characteristic of $F$.
For $n \geq 1$, let $H^n(F, \mu_\ell^{\otimes n})$ be the $n^{\rm th}$ Galois cohomology 
group with coefficients in $\mu_\ell^{\otimes n}$.  We have $  F^*/F^{*\ell} \simeq H^1(F, \mu_\ell)$.
For $a \in F^*$, let $(a ) \in H^1(F, \mu_\ell)$ denote the image of the class of $a$ in $F^*/F^{*\ell}$.
Let $a_1, \cdots , a_n \in F^*$. The cup product   $( a_1 ) \cdots (a_n ) 
\in H^n(F, \mu_\ell^{\otimes n})$ is called a {\it symbol}. A theorem of
 Voevodsky (\cite{Voe}) asserts that every element in $H^n(F,
\mu_\ell^{\otimes n})$ is a sum of symbols. 
Let $\alpha \in H^n(F, \mu_\ell^{\otimes n})$. The {\it symbol length} of $\alpha$ 
 is defined as the smallest $m$ such that $\alpha $
is a sum of $m$ symbols in $H^n(F, \mu_\ell^{\otimes n})$. 

A consequence of class field theory is that if $K$ is a global  field or a local field, then 
every element in $H^n(K, \mu_\ell^{\otimes n})$ is a symbol.

Let $K$ be a $p$-adic field and   $F$   the function of a curve over   $K$. 
Suppose that $K$ contains a primitive $\ell^{\rm th}$ root of unity.
If $\ell \neq p$, then it was proved in (\cite{Su1}, cf. \cite{Eric}) that 
 the symbol length of  every element in $H^2(F, \mu^{\otimes 2}_\ell)$ is at most 2. 
 If  $p \neq \ell$,  then it was proved in (\cite{PS2}, cf. \cite{PS4}) that every element in 
 $H^3(F, \mu_\ell^{\otimes 3})$ is a symbol. 
If $\ell = p$, then it was proved in (\cite{PS3}) that  for every central simple 
 algebra $A$ over $F$, the index of $A$ divides the square of the period of $A$.
 In particular if  $p = 2$,  then  the symbol length of every element in  $H^2(F, \mu^{\otimes 2}_2)$ is 
 at most 2.  Since $u(F) = 8$ (\cite{HB}, \cite{L},  cf. \cite{PS3}), it follows that every element in 
$H^3(F, \mu_2^{\otimes 3})$ is a symbol.

If $F$ is the function field of a curve over a global field of
positive characteristic $p$, $\ell \neq p$ and $F$ contains a primitive $\ell^{\rm th}$ root of unity,  then 
it was proved in (\cite{PS4}) that every element in $H^3(F, \mu_\ell^{\otimes 3})$ is a symbol.  

Let $K$ be a    number field and $F$ the function field of   a curve over $K$.
In (\cite{Su}), it was proved that given finitely any elements $\alpha_1, \cdots , \alpha_ r \in H^3(F, \mu_2^{\otimes 3})$, 
there exists $f \in F^*$ such that $\alpha_i = (f) \cdot \beta_i$ for some $\beta_i \in H^2(F, \mu_2^{\otimes 2})$. 
In particular if there exists an integer $N$ such that  the symbol length of every element 
in $H^2(F, \mu_2^{\otimes 2})$ is bounded by $N$, then the symbol length of every element in $H^3(F, \mu_2^{\otimes 3})$
is bounded by $N$. In (\cite{LPS}), it was proved that such an integer  $N$ exists under the hypothesis that a conjecture of 
Colliot-Th\'el\`ene  on the existence of 0-cycles of degree 1 holds. 
However it is still open whether such $N$ exists.

In this paper we prove the following (see \ref{imaginary})

\begin{theorem} Let $K$ be a  global field   or a local field
and $F$ the function field of a curve over $K$. Let $\ell$ be a prime not equal to char$(K)$. 
Suppose that $K$ contains a primitive $\ell^{\rm th}$ root of unity.
If $\ell \neq 2$ or $K$ is a local field or $K$ is a global field with  no real orderings, 
 then every element in  $H^3(F, \mu_\ell^{\otimes 3})$ is a symbol.  \end{theorem}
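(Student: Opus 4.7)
The cases where $K$ is local, or $K$ is a global field of positive characteristic, are handled by the cited results \cite{PS2}, \cite{PS3}, \cite{PS4}, so the essentially new content is $K$ a number field containing $\mu_\ell$, with $K$ totally imaginary whenever $\ell = 2$. Under this hypothesis the archimedean places contribute trivially to the relevant cohomology: real places do not exist when $\ell = 2$, and $H^i(\mathbb{R}, \mu_\ell^{\otimes n}) = 0$ for $i > 0$ when $\ell$ is odd. This is precisely what makes local-global methods effective.

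The overall plan is to mirror the positive-characteristic strategy of \cite{PS4} on a regular proper arithmetic-surface model $\mathcal{X} \to \Spec O_K$ of $F$, reducing the symbol question in degree 3 to a symbol question in degree 2 on a cyclic degree-$\ell$ extension. Given $\alpha \in H^3(F, \mu_\ell^{\otimes 3})$, I would first use the decomposition of \cite{Su} (for $\ell = 2$) and an analogous argument on $\mathcal{X}$ based on moving lemmas (for odd $\ell$) to write $\alpha = (f) \cdot \beta$ with $f \in F^*$ and $\beta \in H^2(F, \mu_\ell^{\otimes 2})$. Setting $E = F(\sqrt[\ell]{f})$, the cyclic Hochschild-Serre sequence gives $\ker(H^2(F) \to H^2(E)) = (f) \cdot H^1(F, \mu_\ell)$, so modulo this kernel $\beta$ is determined by $\beta|_E$; moreover, modifying $\beta$ by $(f) \cdot (c)$ alters $\alpha$ only by $(f)(f)(c)$, which vanishes for $\ell$ odd and lies in a controlled subgroup for $\ell = 2$. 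A standard descent argument then shows that it suffices to realize $\beta|_E$ as a symbol in $H^2(E, \mu_\ell^{\otimes 2})$ whose representing classes descend to $F$.

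For this second step, I would study $\beta|_E$ via its residues on a regular proper arithmetic-surface model of $E$, using Kato's local-global principle for $H^3$ (which is clean in the totally-imaginary or odd-$\ell$ case) together with the $p$-adic function-field bound of symbol length $\leq 2$ on $H^2$ from \cite{Su1}, \cite{PS2}. By choosing $f$ in the initial decomposition so that its divisor is in general position with respect to the ramification support of $\alpha$ and $\beta$ on $\mathcal{X}$—and exploiting weak approximation on $\Spec O_K$ to prescribe local behavior at each non-archimedean place of $K$—one arranges that $\beta|_E$ is locally a symbol everywhere, and then patches these local representations in the style of Harbater-Hartmann-Krashen to produce a global symbol. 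The main obstacle is the coordinated choice of $f$: it must simultaneously produce a $\beta$ whose local residues at each non-archimedean place are symbol-compatible, and whose local symbol descriptions over $E$ assemble into a single global symbol that descends to $F$. The totally-imaginary-or-odd-$\ell$ hypothesis is essential at the patching stage, because it eliminates the real-place 2-torsion obstructions (of Witt-ring / Stiefel-Whitney type) that otherwise would obstruct local-global injectivity for $H^3$.
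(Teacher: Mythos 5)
Your proposal inverts the order of operations that the paper uses, and the inversion is where the argument breaks down. You propose to first write $\alpha = (f)\cdot\beta$ using the decomposition of \cite{Su} (and some unspecified analogue for odd $\ell$), and then to show that $\beta$, after a modification by $(f)\cdot H^1(F,\mu_\ell)$, can be taken to be a symbol. But making $\beta$ a symbol is precisely the hard open problem the paper describes in its introduction: after \cite{Su}, the bottleneck is a uniform bound on symbol length in $H^2$ for function fields of curves over number fields, which \cite{LPS} reduces to a conjecture of Colliot-Th\'el\`ene but which remains open. There is no ``standard descent argument'' converting the datum $\beta|_E$ into a symbol that descends to $F$; you have exchanged the degree-3 symbol question for the degree-2 symbol question over a field of the same type, which is no easier. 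The paper instead \emph{first} constructs a symbol $\alpha=[a,b)$ whose specializations along the ramification divisors $C_i$ of $\zeta$ equal $\partial_{C_i}(\zeta)$, and only \emph{then} solves for $f$ so that $\zeta - \alpha\cdot(f)$ has vanishing residues at every codimension-one point (Theorem~\ref{local_global}); the injectivity in Kato's complex (and his local-global theorem at the completions) then forces $\zeta = \alpha\cdot(f)$. In this order the problem is local-global in nature and tractable; in your order it is not.

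Beyond the structural issue, your proposal does not address the technical obstacle that motivates the paper's new method. You plan to ``study $\beta|_E$ via its residues on a regular proper arithmetic-surface model of $E$,'' and to appeal to $p$-adic symbol-length bounds from \cite{Su1}, \cite{PS2}. But on an arithmetic surface over a number field there are always closed fibers of residue characteristic $\ell$, and the paper explicitly notes that at such points there is no residue homomorphism on $H^2(F,\mu_\ell^{\otimes 2})$ that cuts out the unramified Brauer group; the prior arguments in \cite{PS2}, \cite{PS4} avoided this only because they assumed $\ell$ was invertible on the base. The key new idea, absent from your proposal, is the Epp-type choice of $a$ of the form $1 - u(\rho-1)^\ell$ (Lemma~\ref{epp}, Corollary~\ref{epp_general}, Lemma~\ref{choice_of_a_general}, Assumption~B4), which controls the extension $[a)$ at the residue-characteristic-$\ell$ points and lets one run the argument without any residue map on $H^2$. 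Finally, the paper's global step is Kato's exactness/injectivity for the complex in Proposition~\ref{complex} together with \cite[Theorem 0.8(2)]{K2}, not field patching in the sense of Harbater--Hartmann--Krashen, so that ingredient is also off target.
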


The above theorem for  $K$ a $p$-adic field and $\ell \neq p$ is proved in  (\cite{PS2}, cf. \cite{PS4}).
Our method in this  paper is uniform, it  covers both global and local fields at the same time, without the assumption  
$\ell \neq p$.

We have the following (see \ref{rank10})

\begin{cor} Let $K$ be a totally imaginary  number field and $F$ the function field of a  curve 
over $K$. Let $q$ be  a quadratic form over $F$ and $\lambda \in F^*$.
If the dimension of $q$ is at least 5, then $q \otimes <\!1, -\lambda \!>$ is isotropic. 
\end{cor}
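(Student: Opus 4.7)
The plan is to produce a form in $I^3 F$ whose Arason invariant can be analyzed via Theorem \ref{imaginary}, and then close the argument using the cohomological-dimension bound $\mathrm{cd}_2(F) \leq 3$. For brevity I write $\langle\langle a_1, \ldots, a_n\rangle\rangle := \qf{1, -a_1} \tensor \cdots \tensor \qf{1, -a_n}$ for the $n$-fold Pfister form. First I reduce to the case $\dim q = 5$ (so $\dim \phi = 10$, where $\phi := q \tensor \qf{1, -\lambda}$) by induction: if $\dim q \geq 6$, one can split off a one-dimensional subform so that $\phi \supset q' \tensor \qf{1, -\lambda}$ with $\dim q' \geq 5$, and invoke the inductive hypothesis. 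I also assume $\lambda \notin F^{*2}$ (otherwise $\qf{1, -\lambda}$ is hyperbolic and $\phi$ is trivially isotropic). Suppose for contradiction that $\phi$ is anisotropic.

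Set $\tilde\phi := \phi \perp \qf{1, -\lambda} = (q \perp \langle 1\rangle) \tensor \qf{1, -\lambda}$, an element of $I^2 F$ of dimension $12$, hyperbolic over $E := F(\sqrt{\lambda})$. Its Clifford invariant $c(\tilde\phi) \in \Brt(F)$ lies in $\ker(\Brt(F) \to \Brt(E)) = \{(\lambda, x) : x \in F^*\}$ (by Arason's theorem on Brauer kernels of quadratic extensions), and bilinearity of the symbol in its second slot allows us to write $c(\tilde\phi) = (\lambda, a)$ for a single $a \in F^*$. Putting $\omega := \tilde\phi \perp -\langle\langle\lambda, a\rangle\rangle$ gives a form in $I^3 F$ still divisible by $\qf{1, -\lambda}$. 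By Theorem \ref{imaginary}, $e_3(\omega) \in H^3(F, \mu_2^{\otimes 3})$ is a single symbol; since $\omega_E$ is hyperbolic, $e_3(\omega)$ lies in $(\lambda) \cdot H^2(F, \mu_2^{\otimes 2})$ (Arason), and the $3$-fold Pfister form representing this symbol is hyperbolic over $E$, hence divisible by $\qf{1, -\lambda}$ (by Pfister's theorem on similarity factors). Consequently $e_3(\omega) = (\lambda, b, c)$ for some $b, c \in F^*$, and $\omega \equiv \pi \pmod{I^4 F}$ where $\pi := \langle\langle\lambda, b, c\rangle\rangle$.

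Here the totally-imaginary hypothesis on $K$ enters: $\mathrm{cd}_2(K) = 2$ yields $\mathrm{cd}_2(F) \leq 3$, so $H^4(F, \mu_2^{\otimes 4}) = 0$; Voevodsky's resolution of the Milnor conjecture combined with the Arason--Pfister Hauptsatz then forces $I^4 F = 0$ in the Witt ring. So $\omega = \pi$ in $W F$, which unwinds to the Witt-ring identity
\[ \phi = \qf{-a, a\lambda} \perp \pi \]
between $10$-dimensional classes. The right-hand side is isotropic iff $a \in D_F(\pi)$ (by Pfister roundness), iff the $4$-fold Pfister form $\pi \tensor \qf{1, -a} = \langle\langle\lambda, a, b, c\rangle\rangle$ is hyperbolic, iff $(\lambda, a, b, c) = 0$ in $H^4(F, \mu_2^{\otimes 4})$ --- and that group vanishes. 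Thus $\phi$, Witt-equivalent and of the same dimension as an isotropic $10$-dimensional form, must itself be isotropic, contradicting the assumption. The main obstacle is the step that extracts a $3$-fold Pfister form divisible by $\qf{1, -\lambda}$ realizing $e_3(\omega)$; once this Pfister form is in hand, the cohomological-dimension input eliminates $I^4 F$ and the contradiction follows immediately.
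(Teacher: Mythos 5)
Your proof is correct, and while it rests on the same two pillars as the paper's argument --- the symbol theorem (\ref{imaginary}) and the vanishing $I^4F = 0$ coming from $\mathrm{cd}_2(F) \leq 3$ --- the path into $I^3F$ is genuinely different. The paper diagonalizes and rescales so that $q = \qf{-a,-b,ab,c,d}$, completes this to the Albert form $\qf{-a,-b,ab,c,d,-cd} \in I^2F$, and then tensors by $\qf{1,-\lambda}$, thereby landing in $I^3F$ with a single $12$-dimensional form; you instead take $\tilde\phi = (q \perp \qf{1}) \tensor \qf{1,-\lambda} \in I^2F$ directly (no diagonalization needed), identify its Clifford invariant as a symbol $(\lambda, a)$ using Arason's description of $\ker(\Brt(F) \to \Brt(E))$, and then kill it by subtracting $\langle\langle \lambda, a\rangle\rangle$. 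The endgame also differs: the paper exploits $I^4F = 0$ to absorb the similarity factor $-cd$ and then finishes with a raw dimension count ($10$-dimensional form Witt-equal to a $6$-dimensional one), whereas you arrive at $\phi \equiv \pi \perp \qf{-a,a\lambda}$ with $\dim = 10$ on both sides, and must invoke a second vanishing ($H^4 = 0 \Rightarrow \langle\langle \lambda,a,b,c\rangle\rangle$ hyperbolic $\Rightarrow a \in D_F(\pi)$) to see that the right-hand side is isotropic. Your route is more intrinsic and avoids the clever Albert-form completion, at the cost of invoking the Clifford invariant and one extra application of $H^4 = 0$; both are valid proofs of the stated corollary and of the more general Proposition~\ref{rank10-general} that the paper isolates.
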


For a smooth integral variety  $X$ over a field $k$, let $CH_0(X)$ be the Chow group of 0-cycles modulo 
rational equivalence. If $k$ is  a number field and $X$ a smooth projective geometrically integral   curve, 
then   Mordell-Weil theorem asserts that $CH_0(X)$ is finitely generated.
 
 Let $C$ be a smooth projective geometrically integral curve over a field
$k$. Let $X \to C$ be a (admissible) quadric fibration (cf. \cite{CTSk}). Let $CH_0(X/C)$ be the kernel of the natural 
homomorphism $CH_0(X) \to CH_0(C)$.  If char$(k) \neq 2$, Colliot-Th\'el\`ene and Skorobogatov   identified  $CH_0(X/C)$ with 
a certain sub-quotient of $k(C)^*$ (\cite{CTSk}). From this identification it follows that $CH_0(X/C)$ is a 2-torsion group.
Thus $CH_0(X/C)$ is finitely generated if and only if it is finite. 
Suppose that $k$ is a number field. If dim$(X) \leq 2$, then the finiteness of $CH_0(X/C)$ is a result of Gros (\cite{Gros}).
If dim$(X) = 3$, then it was proved in (\cite{CTSk}, \cite{PS}) that $CH_0(X/C)$ is finite.
Thus for dim$(X) \leq 3$, $CH_0(X)$ if finitely generated. 
Colliot-Th\'el\`ene and Skorobogatov conjectured that if dim$(X) \geq 4$ and $k$ is a totally imaginary  number field, 
 then $CH_0(X/C) = 0$.
As a consequence of our main theorem, we prove the  following (see \ref{zero-cycles}).

\begin{cor} Let $K$ be a totally imaginary  number field, $C$ a smooth projective geometrically integral curve 
over $K$. Let $X \to C$ be an admissible quadric fibration.  If dim$(X) \geq 4$, then $CH_0(X/C) = 0$.
In particular $CH_0(X)$ is finitely generated.  
\end{cor}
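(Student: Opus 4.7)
The plan is to combine the preceding Corollary~\ref{rank10} with the Colliot-Th\'el\`ene--Skorobogatov description of $CH_0(X/C)$ as a subquotient of $F^{*}$, where $F = K(C)$. Let $q$ be the quadratic form over $F$ defining the generic fiber $X_\eta \subset \P^{n+1}_F$; since $\dim(X) \geq 4$ and the generic fiber is a quadric of dimension $\dim(X) - 1 \geq 3$, we have $\dim(q) \geq 5$.

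The first step is to recall from \cite{CTSk} the explicit identification of $CH_0(X/C)$ with a certain $2$-torsion subquotient of $F^{*}$ built from the generic fiber. The output of that identification, after peeling off the definitions, is that $CH_0(X/C) = 0$ is equivalent to the assertion that for every $\lambda \in F^{*}$, the form $q \otimes \qf{1,-\lambda}$ is isotropic over $F$ (this is the content of the reduction to ``$\lambda$ is a similarity factor modulo squares'' in the CT--Sk framework, and is exactly the quadratic-form translation for which admissibility of the fibration is imposed in \cite{CTSk}).

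The second step is then immediate: since $\dim(q) \geq 5$, Corollary~\ref{rank10} applies to every $\lambda \in F^{*}$ and yields the desired isotropy of $q \otimes \qf{1,-\lambda}$. Thus $CH_0(X/C) = 0$.

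For the final claim, I would use the natural exact sequence $CH_0(X/C) \to CH_0(X) \to CH_0(C) \to 0$ (the right map is surjective, since the pushforward of the class of a closed point of $C$ under a multisection realises any point of $C$ up to a multiple, and $CH_0(X/C)$ is $2$-torsion). Combined with $CH_0(X/C) = 0$, this gives $CH_0(X) \simeq CH_0(C)$. Since $C$ is a smooth projective geometrically integral curve over the number field $K$, the group $CH_0(C) \simeq \Pic(C)$ is finitely generated by the Mordell--Weil theorem applied to the Jacobian of $C$, whence $CH_0(X)$ is finitely generated. The main conceptual obstacle is the first step: correctly invoking the CT--Sk subquotient description and identifying the vanishing condition with the isotropy of $q \otimes \qf{1,-\lambda}$; once that reformulation is in hand, the rest is a clean appeal to Corollary~\ref{rank10} and Mordell--Weil.
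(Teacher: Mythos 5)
Your proof follows the same route as the paper: Corollary~\ref{rank10} gives isotropy of $q \otimes \qf{1,-\lambda}$ for every $\lambda \in F^*$, hence every $\lambda$ is a product of two values of $q$, so $N_q(F) = F^*$ and the Colliot-Th\'el\`ene--Skorobogatov subquotient $CH_0(X/C)$ of $F^*/N_q(F)$ vanishes, after which Mordell--Weil gives finite generation. Two small over-reaches, harmless here because you only use the needed direction: the cited result makes $CH_0(X/C)$ a \emph{subquotient} of $F^*/N_q(F)$, so vanishing of the latter implies (but is not asserted to be equivalent to) vanishing of the former; and the surjectivity of $CH_0(X) \to CH_0(C)$, whose multisection justification is rather vague, is unnecessary --- once $CH_0(X/C)=0$ one has an injection $CH_0(X) \hookrightarrow CH_0(C)$, and a subgroup of a finitely generated abelian group is finitely generated, which is exactly how the paper concludes.
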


Let $K$ be a local field and $p$ the characteristic of the residue field of $K$ or  a global field of
positive characteristic $p$.  Let  $F$ be   the  function field of a curve over $k$   and
$\ell$ a prime not equal to $p$.  Let us recall  that  the main ingredient in the proof of the fact that every 
element in $H^3(F, \mu_\ell^{\otimes 3})$ is a symbol (\cite{PS2})  is a certain local-global principle for divisibility of an element of 
$H^3(F, \mu_\ell^{\otimes 3})$ by a symbol  in $H^2(F, \mu_\ell^{\otimes 2})$ (\cite{PS2}, \cite{PS4}). In fact it was proved that
for a given $\zeta \in H^3(F, \mu_\ell^{\otimes 3})$ and  a symbol $\alpha \in H^2(F, \mu_\ell^{\otimes 2})$ if  for every 
discrete valuation $\nu$ of $F$ there exists $f_\nu \in F^*$ such that $\zeta - \alpha \cdot (f_\nu)$ is unramified at $\nu$, 
then there exists $f\in F^*$ such that $\zeta = \alpha \cdot (f)$. 
In the proof of  this local-global principle,   the existence of residue homomorphisms on $H^2(F, \mu_\ell^{\otimes 2})$ and
$H^3(F, \mu_\ell^{\otimes 3})$ is used. However note that if $K$ is a global field or a $p$-adic field with $\ell  = p$, 
then   there is no `residue homomorphism' on $H^2(F, \mu_\ell^{\otimes 2})$ which describes the unramified Brauer group. 
 
We now briefly explain the main ingredient of our result.
Let $K$ be a global field  or a local field and $F$ the function field of a curve over $K$.
Let $\ell$ be a prime not equal to characteristic of $K$. Suppose that $K$ contains a primitive $\ell^{\rm th}$ root
of unity. Let $\nu$ be a discrete valuation on $F$ and   $\kappa(\nu)$   the residue field at $\nu$. 
 Then there is a   residueue homomorphism $H^3(F, \mu_\ell^{\otimes 3}) \to \Brl(\kappa(\nu))$ (\cite[\S 1]{K2}).
Let $\zeta \in   H^3(F, \mu_\ell^{\otimes 3})$ and   $\alpha = (a, b) \in  H^2(F, \mu_\ell^{\otimes 2})$.    
First we show that if there is a   regular proper model $\XX$ of $F$ such that 
the triple $(\zeta, \alpha, \XX)$ satisfies    certain assumptions,  then there is a local global principle for the 
divisibility of $\zeta$ by $\alpha$  (cf. \ref{localglobal}). One of the key assumptions is that 
$a \in F^*$ has some `nice'  properties at closed   points of $\XX$ which are on  the support of the prime $\ell$ and
in the  ramification of $\zeta$ or $\alpha$ (cf. \ref{assumptions}, \ref{assumptions2}). 
These  assumptions on $a$ enable us to avoid the residue homomorphisms on 
$H^2(F, \mu_\ell^{\otimes 2})$.  Let  $\zeta \in H^3(F, \mu_\ell^{\otimes 3})$.
Suppose that $\ell \neq 2$ or $K$ is a local field or $K$ is a global field without real orderings.
Then we show that there exist a symbol  $\alpha = [a, b) \in H^2(F, \mu_\ell^{\otimes 2})$ and a regular proper model 
$\XX$ of $F$ satisfying the assumption of \S 6.   Thus, by the local global principle for the divisibility, there exists 
$f \in F^*$ such that $\zeta - \alpha \cdot (f)$ is unramified on $\XX$.  
  Then,  using a result of Kato   (\cite{K2}), 
 we arrive at the proof of  our main result (\ref{main}).
   

 \section{Preliminaries }
 
 Let $K$ be a field and  $\ell$   a prime. 
 Then every  non-trivial element  in $H^1(K, \Z/\ell)$ is represented by a pair $(L, \sigma)$, where 
 $L/K$ is a cyclic field extension of degree $\ell$ and $\sigma$ a generator of Gal$(L/K)$.
 
 Suppose $\ell \neq $ char$(K)$ and $K$ contains a primitive 
 $\ell^{\rm th}$ root of unity. Fix a primitive $\ell^{\rm th}$ root of unity $\rho \in K$.
 Let $L/K$ be a cyclic extension of degree $\ell$. 
  Then, by Kummer theory, we have $L = K(\sqrt[\ell]{a})$ for some $a \in K^*$
and $\sigma \in $ Gal$(L/K)$   given by 
 $\sigma(\sqrt[\ell]{a})  = \rho \sqrt[\ell]{a}$  is a  generator   of   Gal$(L/K)$.
  Thus we have an isomorphism $K^*/K^{*\ell} \to H^1(K, \Z/\ell\Z)$
 given by sending  the class of  $a $ in $K^*/K^{*\ell}$ to the pair $(L, \sigma)$, 
 where $L = K[X]/(X^\ell - a)$  and  $\sigma(\sqrt[\ell]{a}) =\rho\sqrt[\ell]{a}$.
 Let $a \in K^*$.   If  the image of the class of  $a$  in $H^1(F, \Z/\ell \Z)$ is $(L, \sigma)$ and $i$ is coprime to $\ell$, 
 then the image of $a^i$ is $(L, \sigma^i)$. In particular $(L, \sigma)^i = (L, \sigma^i)$ for all $i$ coprime to $\ell$. 
 
 Suppose char$(K) = \ell$  and $L/K$ is a cyclic extension of degree $\ell$. 
 Then, by Artin-Schreier theory,   $L = K[X]/(X^\ell - X + a)$
 for some $a \in K$. The element   $\sigma \in  $ Gal$(L/K)$ given by $\sigma(x) = x + 1$, 
 where $x \in L$ is the image of $X$ in $L$, is  a generator of Gal$(L/K)$. 
 Let $\wp : K \to K$ be the Artin-Schreier map $\wp(b) = b^\ell - b$. 
 We have an isomorphism $K/\wp(K) \to H^1(K, \Z/\ell\Z)$ given by sending the class of $a$ to 
 the pair $(L,  \sigma)$, where  $L = K[X]/(X^\ell - X + a)$ and $\sigma(x) = x + 1$.
 We note that if the image  the class of $a$ is    $(L, \sigma)$, then the image of the class of 
 $ia$ is $(L, \sigma^i)$ for all $1 \leq i \leq \ell -1$. 
 
 In either   case  (char$(K) \neq \ell $ or char$(K)  = \ell$), for $a \in K^*$ (or $K$), 
the pair $(L, \sigma)$ is denoted by $[ a )$. Some times, by  abuse of the notation, we also 
 denote the cyclic extension $L$ by $[a )$. 
 
 Let $R$ be a regular local ring  with field of fractions $K$, maximal ideal $m_R$ and residue field $\kappa$. 
 Let $L/K$ be a finite separable   extension and $S$ the integral closure of $R$ in $S$.
 We say that $L/K$ is {\it unramified} at $R$
 if  $S/m_RS$ is a  separable $\kappa$-algebra.  Let $R$ be a regular ring 
 with field of fractions $K$ and $L/K$ a finite separable extension. We say that $L/K$ is {\it unramified} at
 a prime ideal $P$ of $R$ if $L/K$ is unramified at the local ring  $R_P$ of $R$ at $P$.
 We say that $L/K$ is {\it unramified}  on $R$ if $L/K$ is unramified at every prime ideal of $R$. 
 If $L/K$ is unramified at a prime ideal $P$ of $R$, $S_P$ denotes the integral closure of $R_P$
 and the separable $R_P/PR_P$-algebra $S_P/PS_P$ is called the {\it residue field} of $L$ at $P$. 
 Note that $S_P/PS_P$ is a product of separable field extensions of  $R_P/PR_P$.
 If $R$ is a regular local ring, then $L/K$ is unramified at $R$ if and only if the discriminant of $L/K$ is 
 a unit in $R$ (cf. \cite[Exercise 3.9, p.24]{Milne}). 
 Thus in particular, $L/K$ is unramified on $R$ if and only is $L/K$ is unramified at 
 all height one prime ideals of $R$. 
  If $L$ is a product of fields $L_i$ with $K \subset L_i$, then we say that $L/K$ is {\it unramified} on $R$ if each $L_i/K$
is unramified on $R$.

 \begin{lemma}
 \label{rho} (\cite[Proposition 4.2.1]{CT-kato}) Let $K$ be a field with a discrete   valuation $\nu$  and  $\kappa$  
 the  residue field at $\nu$. Let $m$ be the maximal ideal of the valuation ring $R$ at $\nu$. 
 Suppose that char$(K) = 0$ and char$(\kappa) = \ell > 0$.  Suppose that $K$ contains a primitive 
 $\ell^{\rm th}$ root of unity $\rho$. Then  $\ell = x(   \rho - 1)^{\ell -1}$ for some   unit $x$  at $\nu$ with 
 $x \equiv -1 $ modulo $m$.  In particular $\nu(\rho - 1) = \frac{\nu(\ell)}{\ell -1}$. 
 \end{lemma}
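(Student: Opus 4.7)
The plan is to derive the identity directly from the factorization of the $\ell$-th cyclotomic polynomial and then reduce modulo $m$, using Wilson's theorem to pin down the residue of the resulting unit. As a preliminary observation: since $\rho$ is integral over $\Z$ (it is a root of $X^\ell - 1$), it lies in $R$; and since $\bar\rho^\ell = 1$ in $\kappa$ with $\mathrm{char}(\kappa) = \ell$, the factorization $X^\ell - 1 = (X-1)^\ell$ in $\kappa[X]$ forces $\bar\rho = 1$. Hence $\rho - 1 \in m$, so it makes sense to isolate a power of $\rho - 1$ from $\ell$.

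Starting from $X^\ell - 1 = \prod_{i=0}^{\ell-1}(X - \rho^i)$, I would divide by $X-1$ and substitute $X=1$ to obtain
$$\ell = \prod_{i=1}^{\ell-1}(1 - \rho^i).$$
Each factor can be rewritten as $1 - \rho^i = (1-\rho)(1 + \rho + \cdots + \rho^{i-1})$, so setting
$$y := \prod_{i=1}^{\ell-1}(1 + \rho + \cdots + \rho^{i-1})$$
gives $\ell = (1-\rho)^{\ell-1} y = (-1)^{\ell-1}(\rho-1)^{\ell-1}\, y$. The proposed unit is then $x := (-1)^{\ell-1} y$, and the whole content of the lemma reduces to showing $x \equiv -1 \pmod{m}$; the valuation statement follows immediately by applying $\nu$ to $\ell = x(\rho-1)^{\ell-1}$.

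The main (and only) step requiring care is this congruence, and it is where Wilson's theorem enters. Reducing modulo $m$ and using $\rho \equiv 1 \pmod m$ gives $y \equiv \prod_{i=1}^{\ell-1} i = (\ell-1)! \pmod m$. Since $\ell \in m$, Wilson's theorem yields $(\ell-1)! \equiv -1 \pmod \ell$ in $\Z$, hence also modulo $m$, so $y \equiv -1 \pmod m$. Handling the sign $(-1)^{\ell-1}$ splits into cases: for $\ell$ odd, $(-1)^{\ell-1} = 1$ so $x = y \equiv -1$; for $\ell = 2$, $y$ is the empty-range-adjusted product equal to $1$ and $(-1)^{\ell-1} = -1$, so $x = -1$. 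In both cases $x \equiv -1 \pmod m$, which is the required identity, and the displayed valuation formula follows at once. The only real delicacy is bookkeeping the sign when switching from $(1-\rho)$ to $(\rho-1)$ together with the parity of $\ell$; the rest is a direct unwinding of the cyclotomic factorization.
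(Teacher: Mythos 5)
Your argument is correct but takes a genuinely different route from the paper's. The paper works with the translated cyclotomic polynomial $g(X) = (X+1)^{\ell-1} + (X+1)^{\ell-2} + \cdots + 1$, of which $\eta = \rho-1$ is a root, and observes that after collecting terms the coefficients of $X^1,\dots,X^{\ell-2}$ are all multiples of $\ell$ (because $\binom{\ell}{i}$ is divisible by $\ell$ for $1 \le i \le \ell-1$), while the constant term is exactly $\ell$. This gives directly $\eta^{\ell-1} = -\ell\,y$ with $y \equiv 1 \pmod m$, so $x = -y^{-1}$ does the job with no sign bookkeeping. You instead start from the linear factorization $\ell = \prod_{i=1}^{\ell-1}(1-\rho^i)$, pull out $(1-\rho)^{\ell-1}$, reduce the cofactor modulo $m$ to $(\ell-1)!$, and invoke Wilson's theorem to identify it with $-1$. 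Both are fully elementary; the paper's version is a little slicker since it avoids Wilson and the $(-1)^{\ell-1}$ parity discussion, while yours has the advantage of making visible the classical identity $\ell = \prod_{i=1}^{\ell-1}(1-\rho^i)$ and the pleasant appearance of $(\ell-1)!$. One small remark: your case split on $\ell=2$ versus $\ell$ odd is unnecessary, since for $\ell=2$ one has $2 \in m$, so $1 \equiv -1 \pmod m$ and the computation $x = (-1)^{\ell-1}y \equiv -(-1)^{\ell-1}$ already lands on $-1$ in every case. You should also record explicitly that $y \not\equiv 0 \pmod m$ forces $y$, hence $x$, to be a unit of $R$ (this is implicit but worth a line).
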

 
 \begin{proof}  Let $\eta = \rho-1$.   Since  $\eta$ is a zero of  the polynomial 
 $(X + 1)^{\ell -1} + (X + 1)^{\ell -2} + \cdots + 1$ and the binomial coefficients $^\ell C_i$, $1 \leq i \leq \ell-1$
 are divisible by $\ell$, we have 
 $\eta^{\ell-1} = -\ell (\eta^{\ell-2} + b_{\ell-3}\eta^{\ell-3} + \cdots + b_1\eta + 1)$ for some 
 $b_i \in R$.   Since $\rho \equiv 1$ modulo $m$,  $\eta \in m$ and 
 hence $y = \eta^{\ell-2} + b_{\ell-3}\eta^{\ell-3} + \cdots + b_1\eta + 1
 $ is a unit in $R$ and $y  \equiv 1$ modulo $m$. Then $x = -y^{-1}$ has the required property.   
 \end{proof}

\begin{lemma}
 \label{epp1} Suppose  $R$ is  a discrete valuation ring
 with field of fractions $K$ and residue field $\kappa$.
 Suppose that char$(K) = 0$, char$(\kappa) = \ell > 0$ and $K$ contains  a primitive $\ell^{\rm th}$ root of 
 unity $\rho$.  
 Let  $u \in R$ and $\overline{u} \in \kappa$ the image of $u$. 
 If $1 - u(\rho-1)^\ell \in R^\ell$, then $X^\ell - X + \bar{u}$  has a root in  $\kappa$.  
\end{lemma}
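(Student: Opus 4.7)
The plan is to take $v \in R$ with $v^\ell = 1 - u(\rho-1)^\ell$ and produce the desired Artin--Schreier root as the reduction of $t/(\rho-1)$, where $v = 1+t$. Since $u(\rho-1)^\ell \in m$, I first observe that $\bar v^\ell = 1$ in the characteristic-$\ell$ field $\kappa$ forces $\bar v = 1$, so $v = 1 + t$ with $t \in m$. Expanding $v^\ell$ by the binomial theorem and invoking Lemma \ref{rho} to rewrite $\ell = x(\rho-1)^{\ell-1}$ with $x \in R^*$ and $\bar x = -1$ converts the given relation into
\[
-u(\rho-1)^\ell \;=\; x(\rho-1)^{\ell-1}\, t + \sum_{i=2}^{\ell-1}\binom{\ell}{i} t^i + t^\ell.
\]

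The case $u \notin R^*$ is trivial, since then $\bar u = 0$ and $0$ is a root of $X^\ell - X$, so I focus on $u$ a unit. The heart of the argument, which I expect to be the main obstacle, is a valuation comparison to pin down $\nu(t)$. Setting $s := \nu(\rho-1)$, Lemma \ref{rho} gives $\nu(\ell) = (\ell-1)s$, so the left side has valuation $\ell s$; the first right-hand term has valuation $(\ell-1)s + \nu(t)$, the last has valuation $\ell\,\nu(t)$, and the middle ones (using $\ell \mid \binom{\ell}{i}$ for $1 \le i \le \ell-1$) have valuation at least $(\ell-1)s + i\,\nu(t)$. A short case split then rules out $\nu(t) < s$ (the $t^\ell$-term would dominate at valuation $\ell\,\nu(t) < \ell s$) and $\nu(t) > s$ (every term on the right exceeds $\ell s$), forcing $\nu(t) = s$. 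Consequently $t = (\rho-1)\,y$ for some $y \in R^*$.

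Substituting $t = (\rho-1)y$ and dividing the identity through by $(\rho-1)^\ell$ yields
\[
-u \;=\; x\, y + \sum_{i=2}^{\ell-1}\binom{\ell}{i}(\rho-1)^{i-\ell}\, y^i + y^\ell,
\]
where the middle coefficients have valuation at least $(\ell-1)s + (i-\ell)s = (i-1)s \geq s > 0$ and therefore drop out modulo $m$. Reducing mod $m$ and using $\bar x = -1$ collapses the equation to $\bar y^\ell - \bar y + \bar u = 0$, exhibiting $\bar y \in \kappa$ as the required root of $X^\ell - X + \bar u$.
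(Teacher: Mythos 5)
Your proof is correct and follows essentially the same route as the paper's: reduce to $u$ a unit, write the $\ell$-th root as $1+t$ with $t$ in the maximal ideal, use Lemma \ref{rho} to rewrite $\ell$ as $x(\rho-1)^{\ell-1}$, pin down $\nu(t)=\nu(\rho-1)$ by a valuation comparison, then substitute $t=(\rho-1)y$, divide by $(\rho-1)^\ell$ and reduce mod $m$. The only cosmetic difference is that the paper absorbs the intermediate binomial terms into a single factor $\ell d y$ with $y\equiv 1\pmod m$ before doing the two-term valuation comparison, whereas you keep those middle terms explicit and rule them out one by one; the underlying argument is identical.
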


\begin{proof} Let  $m$ be  the maximal ideal of $R$. 
Suppose that $u \in m$. Then $\bar{u} = 0$ and  $X^\ell - X$ has a root in $\kappa$. 

Suppose that $u \in R$ is a unit.
Suppose $1 - u(\rho-1)^\ell \in R^\ell$. 
  Let $z \in R$  with  $z^ \ell =   1 - u(\rho  -1)^\ell  \in R$. Since $\rho -1 \in m$, 
 $1 - u(\rho  -1)^\ell$ is a unit in $R$ and hence $z$ is a unit in $R$ with    $z^\ell \equiv 1$ modulo $m$. 
 Since char$(\kappa) = \ell$, 
 $z \equiv  1 $ modulo $m$. Thus  $z = 1 + d$ for some $d \in m$.  
 Since $z^\ell = (1 + d)^\ell = 1 + \ell d +  \cdots + d^\ell$, 
 all the binomial  coefficients are divisible by $\ell$ and $d \in m$, we have 
 $z^\ell = 1 + \ell dy + d^\ell$ for some unit $y \in R$ with $y \equiv 1 $ modulo $m$. 
 Since  $z^ \ell = 1 -  u(\rho - 1)^\ell$,  we have  $ \ell dy + d^\ell  =   -u(\rho - 1)^\ell$.

 We claim that $\nu(d) = \nu(\rho -1)$.  Suppose that $\nu(\ell d) = \nu(d^\ell)$. Then 
 $\nu(\ell) + \nu(d) = \ell \nu(d)$ and hence $\nu(d) = \frac{\nu(\ell)}{\ell-1} = \nu(\rho -1)$ (\ref{rho}). 
 Suppose that $\nu(\ell d) < \nu(d^\ell)$. Then $\nu(\ell dy + d^\ell) = \nu(\ell d) = \nu(\ell) + \nu(d)$.
 Since $  \ell dy + d^\ell  =   -u(\rho - 1)^\ell$,   $\nu(\ell) + \nu(d) = \ell \nu(\rho -1)$
 and hence $\nu(d) = \ell \nu(\rho -1) - \nu(\ell) = \ell \frac{\nu(\ell)}{\ell -1} - \nu(\ell) = \frac{\nu(\ell)}{\ell -1} = \nu(\rho-1)$. 
 Suppose that $\nu(\ell dy) > \nu(d^\ell)$. Then $\ell \nu(\rho -1) = \nu(d^\ell) =\ell \nu(d)$ and hence 
 $\nu(d) = \nu(\rho -1)$. 
 
 Since $\nu(d) = \nu(\rho -1)$, we have $d = w(\rho -1)$ for some unit $w \in R$. 
 By (\ref{rho}), we have $\ell = x(\rho -1)^{\ell-1}$ with $x \equiv  -1$ modulo $m$.
 Thus $-u(\rho-1)^\ell = \ell d y + d^\ell =  xy w (\rho -1)^\ell + w^\ell (\rho -1)^\ell $
 and hence $-u = w^\ell + xy w$. Since $x \equiv -1$ modulo $m$ and $y \equiv 1 $ modulo $m$, 
 we have $\overline{w}^\ell - \overline{w} + \overline{u} = 0$.  
  In particular $X^\ell - X + \bar{u}$ has a root in   $\kappa$.
\end{proof}

We  have   the following (cf. \cite[Proposition 1.4]{epp})
 
 \begin{prop}
 \label{epp} 
 Suppose  $R$ is  a discrete valuation ring
 with field of fractions $K$ and residue field $\kappa$.
 Suppose that char$(K) = 0$, char$(\kappa) = \ell > 0$ and $K$ contains  a primitive $\ell^{\rm th}$ root of 
 unity $\rho$.  
 Let  $u \in R$ be a unit and  $L  = K[X]/(X^\ell - (1 - u (\rho - 1) ^\ell))$. 
   Let $S$ be the integral closure of 
 $R$ in $L$.   Then $L/K$ is unramified at $R$ 
 and  \\
$\bullet$ if  $X^\ell - X + \overline{u}$ is irreducible in $\kappa[X]$,  then there is a unique maximal ideal
$m_S$ of $S$ with $S/m_S \simeq  
  \kappa[X]/(X^\ell - X + \overline{u})$, where $\overline{u}$ is the image of $u$ in $\kappa$ \\
  $\bullet$ if  $X^\ell - X + \overline{u}$ is reducible in $\kappa[X]$, then the maximal ideal $m_R$ of 
  $R$ splits into product of $\ell$ maximal ideals   of $R$. 
   \end{prop}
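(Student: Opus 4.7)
The strategy is to make a change of variable that turns the defining polynomial of $L$ into a monic Artin--Schreier-type polynomial over $R$. Set $Y = (X-1)/(\rho-1)$, so $X = 1 + Y(\rho-1)$. Substituting into $P(X) = X^\ell - (1 - u(\rho-1)^\ell)$, the constant terms cancel and one obtains
\[
P\bigl(1 + Y(\rho-1)\bigr) = \sum_{i=1}^{\ell} \binom{\ell}{i}\,Y^i (\rho-1)^i + u(\rho-1)^\ell .
\]
Writing $\binom{\ell}{i} = \ell\, c_i$ for $1 \le i \le \ell-1$ with $c_i \in \Z$ and $c_1 = 1$, and using Lemma~\ref{rho} to replace $\ell$ by $x(\rho-1)^{\ell-1}$, every term becomes divisible by $(\rho-1)^\ell$. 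Dividing through gives
\[
R(Y) := Y^\ell + xY + u + \sum_{i=2}^{\ell-1} x c_i (\rho-1)^{i-1} Y^i \;\in\; R[Y],
\]
which is monic of degree $\ell$, and the change of variable furnishes an $R$-algebra isomorphism between $R[Y]/(R(Y))$ and the $R$-subalgebra $R[(X-1)/(\rho-1)]$ of $L$.

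Next I would verify that $T := R[Y]/(R(Y))$ is étale over $R$. Since all the coefficients $x c_i (\rho-1)^{i-1}$ for $i \ge 2$ lie in the maximal ideal $m_R$, the reduction of $R(Y)$ modulo $m_R$ is $Y^\ell + \bar x Y + \bar u = Y^\ell - Y + \bar u$ (because $\bar x = -1$ by Lemma~\ref{rho}). This Artin--Schreier polynomial is separable over $\kappa$, so its discriminant is a unit in $\kappa$; hence the discriminant of $R(Y)$ is a unit in $R$, and $T$ is finite étale of rank $\ell$ over $R$. In particular $T$ is reduced and is a product of discrete valuation rings dominating $R$; since it is a finite $R$-algebra of rank $\ell = \dim_K L$ contained in $L$, it must coincide with the integral closure $S$ of $R$ in $L$. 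This immediately gives the unramifiedness claim.

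Finally I would split into the two cases according to the factorization of $\bar R(Y) = Y^\ell - Y + \bar u$ in $\kappa[Y]$. A standard fact about Artin--Schreier polynomials is that if $\alpha$ is a root then so is $\alpha + 1, \alpha + 2, \ldots, \alpha + \ell - 1$, so $Y^\ell - Y + \bar u$ is either irreducible of degree $\ell$ or splits into $\ell$ distinct linear factors over $\kappa$. In the irreducible case, $S/m_R S \cong \kappa[Y]/(Y^\ell - Y + \bar u)$ is a field, so $m_R S$ is the unique maximal ideal $m_S$ of $S$ with the asserted residue field. In the split case, $S/m_R S \cong \kappa^\ell$ by the Chinese remainder theorem, so $m_R S$ is the intersection of $\ell$ distinct maximal ideals of $S$, each with residue field $\kappa$, proving the second conclusion.

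The main obstacle is the polynomial manipulation in the first step: one has to recognize that the substitution $X = 1 + Y(\rho-1)$ is the right one and carefully track powers of $\rho - 1$ so that the result is monic with integral coefficients and has a separable Artin--Schreier reduction. Once this normal form is in hand, everything else follows from standard étale descent.
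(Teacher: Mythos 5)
Your proof is correct and takes a genuinely different, and in some ways cleaner, route than the paper. The paper first reduces to the case that $R$ is complete and $L$ is a field, invokes Lemma~\ref{epp1} (a Hensel-type argument plus careful valuation bookkeeping) to conclude that $X^\ell - X + \overline{u}$ acquires a root in $S/m_S$, and then finishes by degree-counting; the reducible case requires a separate binomial computation showing $1-u(\rho-1)^\ell \in R^{*\ell}$. You instead go directly to the normalization: the change of variable $Y = (X-1)/(\rho-1)$, together with Lemma~\ref{rho} to absorb the factor $\ell$ into $(\rho-1)^{\ell-1}$, produces a monic $R(Y)\in R[Y]$ with separable reduction $Y^\ell - Y + \overline{u}$, so that $T=R[Y]/(R(Y))$ is standard étale over $R$, hence regular, hence normal, hence equal to $S$. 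This computes $S/m_RS \cong \kappa[Y]/(Y^\ell - Y + \overline{u})$ uniformly, and the final case split is just the standard Artin--Schreier dichotomy. Your route avoids the completion reduction and avoids Lemma~\ref{epp1} entirely, at the cost of the one-line binomial manipulation (which in fact repackages the computation the paper does inside Lemma~\ref{epp1} and the reducible case). One small imprecision: $T$ need not literally be a \emph{product} of discrete valuation rings --- it could be a connected semilocal Dedekind domain with several maximal ideals (e.g.\ $\Z_{(5)}[i]$ over $\Z_{(5)}$). What you actually need, and what étaleness does give you, is that $T$ is regular, hence normal; a normal finite $R$-subalgebra of $L$ with $T\otimes_R K = L$ must equal the integral closure $S$. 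With that phrasing corrected the argument is airtight.
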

 
 \begin{proof}    Without loss of generality we assume that $R$ is complete and $L$ is a field. 
 Then $S$ is a complete discrete valuation  ring.  Let $m_R$ be the maximal ideal of $R$ and
 $m_S$ the maximal ideal of $S$. Then $m_RS = m_S$.

 Suppose that $X^\ell - X + \overline{u}$ is irreducible in $\kappa[X]$.
 Since  $ 1 - u(\rho-1)^\ell \in S$,  by (\ref{epp1}), 
  $X^\ell - X - \overline{u}$ has a root in $S/m_S$. 
 Since $[S/m_S : \kappa]  \leq \ell$,   $S/m_S \simeq \kappa[X]/(X^\ell - x + \overline{u})$ 
 and hence $m_S$ is the unique maximal ideal of  $S$ with  $L/K$   unramified at $R$.

 Suppose that $X^\ell - X + \overline{u}$ is reducible in $\kappa[X]$. 
 Since char$(\kappa) = \ell$, $X^\ell - X + \overline{u}$ has $\ell$ distinct roots in  $\kappa$.
 Since $R$ is complete,  $X^\ell - X + \overline{u}$ has a root $w$ in $R$.
 Let $d = w(\rho-1) \in R$. Then, as in the proof of (\ref{epp1}),  we have 
 $ (1 + d)^ \ell =  1 + \ell d y + d^\ell$ for some $y \in R$ with $y \equiv 1$ modulo $m_R$. 
 By (\ref{rho}), we have $\ell = x(\rho-1)^{\ell-1}$ for some $x \in R$ with $x \equiv -1$ modulo $m_R$. 
 Since $ w^\ell =  w   -u$ and $d = w(\rho-1)$, we have 
 $$
 \begin{array}{rcl}
  (1 + d)^\ell = 1  + \ell d y + d^\ell  & =  &  1 + \ell w(\rho-1) y  + w^\ell (\rho-1)^\ell \\
  &  = & 1 +   \ell w (\rho-1) y  +  w(\rho-1)^\ell - u (\rho-1)^\ell \\
  & = & 1  + x y w (\rho-1)^\ell + w(\rho-1)^\ell  - u(\rho-1)^\ell  \\
  & = & 1 + w (\rho-1)^\ell (xy + 1) - u(\rho-1)^\ell .
  \end{array}
 $$
 Since $xy + 1 \equiv 0$ modulo $m$, we have 
 $(1 + d)^\ell = 1 - u(\rho-1)^\ell$ modulo $(\rho-1)^\ell m$ and hence 
 $1 - u(\rho-1)^\ell \in R^{*\ell}$ (cf. \cite[\S 0.3]{epp})
  \end{proof}

 \begin{cor}
 \label{epp_general}
 Suppose that $A$ is a regular local ring of dimension two  with  field of fractions $F$, 
  maximal ideal  $m$ and 
 residue field $\kappa$. Suppose that char$(F) = 0$,  char$(\kappa) = \ell > 0$ and 
 $F$ contains a primitive $\ell^{\rm th}$ root of  unity $\rho$. 
 Let $u \in A$ be a unit and $L = F[X]/(X^\ell - (1 - u (\rho - 1)^\ell))$. Suppose that 
 $L$ is a field. 
 Let $S$ be the 
 integral closure of $A$ in $L$.  Then $L/F$ is unramified on $A$ and $S/mS \simeq 
 \kappa[X]/(X^p - X + \overline{u})$, where $\overline{u}$ is the image of $u$
in $\kappa$.   
 \end{cor}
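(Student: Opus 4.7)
The plan is to reduce the unramifiedness claim to the DVR case of Proposition~\ref{epp} by working at each height-one prime of $A$, and then to identify $S$ explicitly as $A[W]/(g(W))$ for a monic polynomial $g \in A[W]$ of degree $\ell$ whose reduction modulo $m$ is $W^\ell - W + \overline{u}$. Two preliminary observations are needed. First, since $\overline{\rho}^\ell = 1$ in the characteristic-$\ell$ field $\kappa$ one has $\overline{\rho} = 1$, so $\rho - 1 \in m$ and $c := 1 - u(\rho - 1)^\ell$ is a unit in $A$ congruent to $1$ modulo $m$. Second, from $\ell = \prod_{i = 1}^{\ell - 1}(1 - \rho^i)$ and $1 - \rho^i = -(\rho - 1)(1 + \rho + \cdots + \rho^{i - 1})$ one obtains $\ell = (-1)^{\ell - 1}(\rho - 1)^{\ell - 1} U$ for an element $U \in A$ which reduces modulo $m$ to $(\ell - 1)! \equiv -1$ by Wilson's theorem. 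Hence $x := \ell/(\rho - 1)^{\ell - 1}$ lies in $A^*$ and $\overline{x} = (-1)^\ell = -1$ in $\kappa$.

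For the unramifiedness of $L/F$ on $A$, I would apply the criterion recalled in the preliminaries that a finite separable extension is unramified on a regular ring if and only if it is unramified at every height-one prime. At a height-one prime $P \ni \ell$, this is exactly Proposition~\ref{epp} applied to the DVR $A_P$. At a height-one prime $P \not\ni \ell$, both $\ell$ and $c$ are units in $A_P$, so the discriminant of $X^\ell - c$ is a unit in $A_P$, and the subring $A_P[y] \subseteq S \otimes_A A_P$ (where $y \in S$ satisfies $y^\ell = c$) is étale over $A_P$, hence integrally closed and therefore equal to the integral closure.

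For the residue-ring computation, choose $y \in S$ with $y^\ell = c$ and set $w := (y - 1)/(\rho - 1) \in L$. Expanding $(1 + (\rho - 1) w)^\ell = c$ by the binomial theorem, using $\ell \mid \binom{\ell}{i}$ for $1 \leq i \leq \ell - 1$ to substitute $\ell = x(\rho - 1)^{\ell - 1}$ in each middle term, and dividing through by $(\rho - 1)^\ell$, one arrives at the monic relation
\[
  g(w) = 0, \quad g(W) := W^\ell + W\,h(W) + u \in A[W], \quad h(W) := x \sum_{i = 1}^{\ell - 1} \tfrac{1}{\ell}\tbinom{\ell}{i}(\rho - 1)^{i - 1} W^{i - 1}.
\]
Only the $i = 1$ term of $h$ survives modulo $m$, with constant value $\overline{x} = -1$; hence $\overline{g}(W) = W^\ell - W + \overline{u}$ in $\kappa[W]$, and an analogous computation gives $\overline{g'}(W) = -1$. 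In particular $w$ is integral over $A$, so $w \in S$.

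Finally, put $B := A[W]/(g(W))$, a free $A$-algebra of rank $\ell$ with the map $B \to S$ sending $W \mapsto w$. Since $g$ has degree $\ell = [L : F]$, the induced map $B \otimes_A F \to L$ is an isomorphism, so in particular $B$ is a domain with $\Frac(B) = L$. Every maximal ideal of the finite $A$-algebra $B$ lies over $m$, and $g'(w) \equiv -1 \pmod{mB}$, so $g'(w) \in B^*$; hence $B$ is étale over $A$, therefore regular, therefore integrally closed in $L$. Thus $B = S$, and $S/mS = B/mB = \kappa[W]/(W^\ell - W + \overline{u})$. The main technical point is the globalization of Lemma~\ref{rho} to the ring $A$ — specifically, showing $x \in A^*$ with $\overline{x} = -1$ — once that is in hand, the explicit polynomial $g$ writes itself down and the étale criterion $g'(w) \in B^*$ follows from the congruence $\overline{g'} = -1$.
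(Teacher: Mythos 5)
Your proof is correct, and it takes a genuinely different route from the paper's for the residue computation. The paper, after establishing unramifiedness prime by prime, fixes a regular system of parameters $(\pi,\delta)$, observes that $S/\pi S$ is regular semi-local, and then splits into the cases $\ell\in(\pi)$ and $\ell\notin(\pi)$, re-applying Proposition~\ref{epp} at the one-dimensional stage $A/(\pi)$ before reducing once more modulo $\delta$. You instead globalize Lemma~\ref{rho} to the two-dimensional ring $A$ directly (via $\ell=\prod_{i=1}^{\ell-1}(1-\rho^i)$ and Wilson's theorem, a clean self-contained alternative to the paper's binomial argument) and then manufacture an explicit standard-\'etale presentation $S=A[W]/(g)$ from the substitution $w=(y-1)/(\rho-1)$, reading off $S/mS=\kappa[W]/(\overline g)=\kappa[W]/(W^\ell-W+\overline u)$ with no case distinction and no dependence on a choice of parameters. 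This buys uniformity and an explicit model of $S$; in fact it slightly over-proves the statement, since once $g'(w)\in B^{\times}$ is established, the \'etaleness of $B=S$ over $A$ already yields unramifiedness at \emph{every} prime, so your separate appeal to Proposition~\ref{epp} at height-one primes containing $\ell$ in the first paragraph is logically redundant. The one delicate step, which you handle correctly, is the identification $B=S$: you need $B$ to be a domain with $\Frac(B)=L$ (which follows from freeness of $B$ over $A$, so $B\hookrightarrow B\otimes_A F$, together with $g$ being the degree-$\ell$ monic minimal polynomial of $w$ over $F$), and that an \'etale algebra over the regular ring $A$ is regular, hence normal, hence integrally closed in its fraction field.
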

 
 \begin{proof} Since char$(\kappa) = \ell$ and $\rho^\ell = 1$, $1 -\rho$ is in the maximal ideal of $A$
 and hence $1 - u (\rho - 1)^\ell$ is a unit in $A$. 
 Let $P$ be a prime ideal of $A$ of height one. 
Suppose char$(A/P) \neq \ell$. 
Since $1 - u (\rho - 1)^\ell$ is a unit in $A$,   $L/K$ is unramified at 
$P$.  If char$(R/P) = \ell$, then  by (\ref{epp}), $L/K$ is unramified at $P$.
Thus $L/K$ is unramified  on $A$. 
 
Let  $m = (\pi, \delta)$ be the  maximal ideal of $A$. 
Since $L/K$ is unramified on $A$, $S/\pi S$ is a regular semi-local ring (cf. \cite[Proposition 3.17, p.27]{Milne}). 
 Suppose that char$(A/(\pi)) \neq \ell$. 
Since $1 - u (\rho - 1)^\ell$ is a unit at $\pi$, $L/K$ is unramified at $\pi$
and $S \otimes_A A_{(\pi)}/(\pi) \simeq  (A_{(\pi)}/(\pi))[X]/(X^\ell - (1 - \overline{u}(\overline{\rho}-1)^\ell))$, 
where $\bar{~}$ denotes the image modulo $(\pi)$. 
 Hence by (\ref{epp}), $S/(\pi, \delta)S = \kappa[X]/(X^\ell - X + \overline{u})$.
 Suppose that char$(A/(\pi)) = \ell$. Then, by (\ref{epp}),  the field of fractions of 
 $S/\pi S$ is the field of fractions of $(A/(\pi))[X]/(X^\ell - X + \overline{u})$.
  Since $u$ is a unit in $A/(\pi)$,
 $A/(\pi)[X]/(X^\ell - X + \overline{u})$ is a regular local ring and hence 
 $S/\pi S \simeq A/(\pi)[X]/(X^\ell - X + \overline{u})$.
 Hence  $S/(\pi, \delta)S = \kappa[X]/(X^\ell - X + \overline{u})$. 
 \end{proof}

Let $R$ be a regular ring  of dimension at most 2 with field of fractions $K$ and $\ell$ a prime.
If $\ell$ is  not equal to char$(K)$, then assume that $K$ contains a primitive 
$\ell^{\rm th}$ root of unity $\rho$.  Suppose   $L = [a)$ is  a cyclic extension  of $K$ of degree $\ell$.  
Let $P$ be a prime ideal of $R$, $\kappa(P) = R_P/PR_P$  and $S_P$ the integral closure of $R_P$ in $L$. 
Suppose char$(\kappa(P)) \neq \ell$.  Then $L = K[X]/(X^\ell - a)$ and hence $S_P/PS_P \simeq \kappa(P)[X]/(X^\ell -\overline{a})$
where $\overline{a}$ is the image of $a$ in $\kappa(P)$. 
Suppose char$(\kappa(P)) = \ell$, char$(K) \neq \ell$ and $a  = 1 - u(\rho-1)^\ell$ for some $u \in R_P$.  
Then, by (\ref{epp}, \ref{epp_general}),  $S_P/PS_P \simeq \kappa(P)[X]/(X^\ell -X + \overline{u})$.
Suppose char$(\kappa(P))$ = char$(K)  = \ell$ and $a \in R_P$. Then $L = K[X]/(X^\ell -X + a)$ and
hence  $S_P/PS_P \simeq \kappa(P))[X]/(X^\ell - X + \overline{a})$. 
Thus, in either case, $S_P/PS_P$ is either a cyclic field extension of degree $\ell$ over $\kappa(P)$
 or the split extension of degree $\ell$ over $\kappa(P)$ and we denote these $S_P/PS_P$ by 
 $[a(P))$.    If $P = (\pi)$ for some 
 $\pi \in R$, then we also denote $[ a(P) ) $ by $[ a(\pi) )$.  
 If $P$ induces a discrete valuation $\nu$ on $K$, then we also denote $[a(P))$ by $[a(\nu))$.
  For an element $b \in R$, we also denote the image of $b$ in $R/P$ by $b(P)$. 
  If $b \in R$ and $c \in R/P$, we write $b = c \in R/P$ for $b \equiv c $ modulo $P$. 
  
 \begin{lemma}
 \label{choice_of_a_general} Let $A$ be a semi local regular ring of dimension at most two with field of fractions $F$. 
 Let $\ell$ be a prime not equal to the characteristic of $F$. Suppose that $F$ contains a primitive $\ell^{\rm th}$ root of 
 unity.   For each maximal ideal $m$ of
 $A$, let $[u_m)$ be  a cyclic extension of $A/m$ of degree $\ell$.  Then there exists   $a \in A$ such that  \\
 $\bullet$ $[a )$ is unramified on $A$ with residue field $[u_m)$ at each maximal ideal $m$ of $A$  \\
 $\bullet$ if $\ell  = 2$ and   $A/m$ is finite for all maximal ideals $m$ of $A$, 
 then  $a$ can be chosen to be  a sum of two squares in $A$.  
  \end{lemma}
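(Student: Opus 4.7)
I will construct $a$ via the Chinese Remainder Theorem, prescribing its reduction at each maximal ideal of $A$ in a way that allows Propositions~\ref{epp} and \ref{epp_general} to control the local behavior. List the maximal ideals as $m_1, \ldots, m_r$ and pick a lift $\widetilde{u}_i \in A$ of $u_{m_i}$, chosen to be a unit in $A_{m_i}$ (which is possible since $u_{m_i} \neq 0$ in $A/m_i$). Define $m_i$-primary ideals $I_i \subset A$ by $I_i := m_i$ if $\mathrm{char}(A/m_i) \neq \ell$ and $I_i := ((\rho-1)^\ell m_i A_{m_i}) \cap A$ if $\mathrm{char}(A/m_i) = \ell$; note $\rho - 1 \in m_i$ in the latter case by Lemma~\ref{rho}. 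Since the $I_i$ are pairwise comaximal, CRT yields $a \in A$ with $a \equiv \widetilde{u}_i \pmod{I_i}$ in the first case and $a \equiv 1 - \widetilde{u}_i(\rho-1)^\ell \pmod{I_i}$ in the second. In both cases $a$ reduces to a unit modulo $m_i$ (to $u_{m_i}$ or to $1$), so $a \in A^*$.

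To verify $[a)$ is unramified on $A$ with residue $[u_{m_i})$ at each $m_i$, I inspect each height-one prime $P$ of $A$. Every such $P$ lies inside some $m_i$. If $\mathrm{char}(\kappa(P)) \neq \ell$, then $a \in A_P^*$ and $\ell$ is invertible in $\kappa(P)$, so the Kummer extension $F[X]/(X^\ell - a)$ is \'etale over $A_P$, hence unramified. If $\mathrm{char}(\kappa(P)) = \ell$, then $\ell \in P \subset m_i$ forces $\mathrm{char}(A/m_i) = \ell$; the prescribed congruence lets me write $a = 1 - w(\rho-1)^\ell$ in $A_{m_i}$ with $w = \widetilde{u}_i + t$ for some $t \in m_i A_{m_i}$, so $w \in A_{m_i}^*$ with $\overline{w} = u_{m_i}$. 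Proposition~\ref{epp_general} then gives that $[a)$ is unramified on $A_{m_i}$ with residue $[u_{m_i})$ at $m_i$, and Proposition~\ref{epp} applied in the DVR $A_P$ gives unramifiedness at $P$. When $\mathrm{char}(A/m_i) \neq \ell$, the residue at $m_i$ is $(A/m_i)[X]/(X^\ell - u_{m_i}) = [u_{m_i})$ directly from $\overline{a} = u_{m_i}$.

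For the second bullet, assume $\ell = 2$ and every $A/m_i$ is finite. I realize $a$ as $x^2 + y^2$ by making local choices for $x, y$ and combining via CRT. At $m_i$ with odd residue characteristic every element of the finite field $A/m_i$ is a sum of two squares, so pick $x_i, y_i \in A$ with $x_i^2 + y_i^2 \equiv u_{m_i} \pmod{m_i}$. At $m_i$ with residue characteristic $2$, Frobenius on $A/m_i$ is bijective, so $u_{m_i}$ has a square root $\overline{z}_0 \in A/m_i$; lifting to $z_0 \in A$ and taking $x_i = 1$, $y_i = 2 z_0$ yields $x_i^2 + y_i^2 = 1 + 4 z_0^2 \equiv 1 - 4 \widetilde{u}_i \pmod{4 m_i}$, because $z_0^2 + \widetilde{u}_i$ reduces to $u_{m_i} + u_{m_i} = 0$ in $A/m_i$. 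CRT now supplies $x, y \in A$ with $x \equiv x_i$, $y \equiv y_i \pmod{I_i}$ for every $i$, and $a := x^2 + y^2$ satisfies all the previous congruences, so the unramifiedness verification above applies verbatim.

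The most delicate point will be confirming unramifiedness at height-one primes $P \subsetneq m_i$ with $\mathrm{char}(\kappa(P)) = \ell$: one must ensure the form $a = 1 - w(\rho-1)^\ell$ with $w$ a unit persists when passing from the two-dimensional $A_{m_i}$ down to the DVR $A_P$. This descends automatically from the $A_{m_i}$-level computation, which is precisely why $I_i$ is chosen to encode the congruence inside $A_{m_i}$ rather than merely modulo $m_i$.
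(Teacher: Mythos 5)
Your overall strategy mirrors the paper's (use epp/epp\_general to control the residue at characteristic-$\ell$ places, and CRT to glue local prescriptions), but there is a genuine gap in the CRT step, and the paper uses a small but essential trick to avoid it.

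You assert that the ideals $I_i$ are $m_i$-primary and pairwise comaximal. This is false in general when $\mathrm{char}(A/m_i) = \ell$. The ideal $I_i = \bigl((\rho-1)^{\ell}\, m_i A_{m_i}\bigr) \cap A$ has associated primes beyond $m_i$: any height-one prime $P$ with $\rho-1 \in P \subset m_i$ contains $I_i$, so $I_i$ is not $m_i$-primary. More importantly, if two maximal ideals $m_i, m_j$ both have residue characteristic $\ell$ and there is a height-one prime $P$ with $\rho - 1 \in P \subset m_i \cap m_j$ (which happens whenever $m_i$ and $m_j$ lie over a common prime above $\ell$ in a two-dimensional arithmetic situation, e.g.\ the semilocalization of $\Z[\rho][t]$ at $(\mathfrak p, t)$ and $(\mathfrak p, t-1)$ with $\mathfrak p \mid \ell$), then $I_i + I_j \subset P \neq A$, so the $I_i$ are not comaximal and the CRT as you invoke it does not apply. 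Although your targets $1 - \widetilde u_i(\rho-1)^\ell$ happen to agree modulo $I_i + I_j$ (their difference lies in $(\rho-1)^{\ell}A \subset I_i + I_j$), that compatibility alone only gives the patching for two ideals; for three or more non-comaximal ideals the exactness of the relevant \v{C}ech sequence needs a separate argument, which is not supplied.

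The paper sidesteps this cleanly: it does not try to patch $a$ directly. Instead it chooses, for each $m$, an element $b_m \in A/m$ (namely $b_m = u_m$ if $\mathrm{char}(A/m)=\ell$, and $b_m = (1-u_m)/(\rho-1)^\ell$ if $\mathrm{char}(A/m)\neq\ell$, the latter being legal since $\rho-1$ is a unit in $A/m$), lifts to $b\in A$ by CRT modulo the maximal ideals --- which \emph{are} pairwise comaximal --- and then sets $a = 1 - b(\rho-1)^{\ell}$ \emph{globally}. This single global expression forces the correct local shape at every $m$, at once: modulo a characteristic-$\neq\ell$ maximal ideal it reduces to $u_m$, and at a characteristic-$\ell$ maximal ideal it is already in the form $1 - w(\rho-1)^{\ell}$ with $w = b$. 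Your verification of unramifiedness at height-one primes is otherwise correct, and for the sum-of-two-squares part the same global-form trick applies: writing $a = (1+4x)^2 + (2y)^2 = 1 - 4(-2x - 4x^2 - y^2)$ puts $a$ in the required form with CRT applied only to $x$ and $y$ modulo the maximal ideals. If you replace your patching of $a$ by patching of $b$ in this way, the rest of your proof goes through.
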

 
 \begin{proof} Let $\rho \in F$ be a primitive $\ell^{\rm th}$ root of unity. 
  Let $m$ be a maximal ideal of $A$. If  char$(A/m) \neq \ell$, then 
 let $b_m = \frac{ 1 - u_m}{(\rho-1)^\ell} \in A/m$. If  char$(A/m) =  \ell$, then 
 let $b_m =   u_m  \in A/m$. Chose  $b \in A$ with $b = b_m \in A/m$ for all maximal ideals $m$ of $A$
 and $a = 1 - b(\rho- 1)^\ell$. Let $m$ be a maximal ideal of $A$.
 Suppose that char$(A/m) \neq \ell$.  Then, by the choice of $a$ and $b$, we have $a = 1 - b_m(\rho-1)^\ell = u_m \in A/m$.
 Thus $[a)$ is unramified at $m$ with the residue field $[u_m)$ at $m$.
 Suppose that char$(A/m) =\ell$.  Then, by  (\ref{epp}, \ref{epp_general}), $[a)$ is unramified at $m$ with the residue 
 field $[\overline{b})$. Since $b = b_m = u_m \in A/m$, the residue field of $[a)$ at $m$ is  $[u_m)$. 
 
 Suppose $\ell = 2$ and  $A/m$ is a finite field  for all maximal ideals $m$ of $A$. 
 Let $m$ be a maximal ideal of $A$. Suppose that char$(A/m) \neq 2$. Since every element of 
 $A/m$ is a sum of two  squares in $A/m$ (\cite[p. 39, 3.7]{Sc}), there exist $x_m, y_m \in A/m$ such that 
 $x_m^2 + y_m^2 = 1 - 4u_m$.  Suppose that char$(A/m) = 2$. Since $A/m$ is a finite field, 
 every element in $A/m$ is a square. Let $y_m \in A/m$ be such that $y_m^2 = u_m$. 
 Let $x, y \in A$ be such that for every maximal ideal $m$ of $A$, \\
 $\bullet$ if char$(A/m) \neq 2$, then $ x = \frac{x_m -1}{4} \in A/m$ and $y = \frac{y_m}{2} \in A/m$  \\
$\bullet$ if char$(A/m) = 2$, then $ x = 0 \in A/m$ and $y = y_m \in A/m$. \\
Let $ a = (1 + 4x)^2 + (2y)^2 \in A$.  Let $m$ be a  maximal ideal of $A$. 
Suppose char$(A/m)  \neq 2$. Then $ a = x_m^2 + y_m^2 = u_m \in A/m$
and hence $[a)$ is unramified at $m$ with residue field  at $m$ equal to $[u_m)$.
Suppose that char$(A/m) = 2$.  
Then $\frac{1 -a}{4} = u_m \in A/m$ and hence $[a)$ is unramified at $m$ with residue field 
$[u_m)$ (\ref{epp}, \ref{epp_general}). 

  \end{proof}

 \begin{lemma} 
 \label{localift}Let $R$ be a   regular ring of dimension at most 2 and $K$ its field of fractions.
 Let $\ell$ be a prime not equal to char$(K)$.  Suppose that 
 $K$ contains a primitive $\ell^{\rm th}$ root of unity $\rho$. 
 Let $L = K(\sqrt[\ell]{u})$ for some 
 $u \in R$. Let $m_1, \cdots , m_r, m_{r +1}, \cdots , m_n$ be   maximal ideals of $R$.
 Suppose that  $\kappa(m_j) = \ell$ and $L/K$ is unramified  at  $m_j$ for all $ r+1  \leq j \leq n$.
 Then there exists $v \in R$ such that  $L= K(\sqrt[\ell]{v})$, 
 $v \equiv u$ modulo $m_i$ for all $1 \leq i \leq r$ and $\frac{1 - v}{(\rho -1)^\ell}$ is a unit at $m_j$ for all $r+1 \leq j \leq n$.
  \end{lemma}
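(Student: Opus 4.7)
I will look for $v$ of the form $v=uw^{\ell}$ with $w\in R$; this automatically forces $K(\sqrt[\ell]{v})=K(\sqrt[\ell]{u})=L$. The congruence $v\equiv u\pmod{m_i}$ for $1\le i\le r$ then reduces to $w\equiv 1\pmod{m_i}$, and the condition that $(1-v)/(\rho-1)^{\ell}$ be a unit at $m_j$ becomes a local statement about $uw^{\ell}$ at $m_j$. Thus the task splits into a purely local problem at each $m_j$ (for $r+1\le j\le n$) together with an application of the Chinese Remainder Theorem.

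\textbf{Local step.} Fix $j$ with $r+1\le j\le n$, so $\operatorname{char}\kappa(m_j)=\ell$ and $L/K$ is unramified at $m_j$. I claim that there exist $w_j\in R_{m_j}^{\times}$ and a unit $t_j\in R_{m_j}^{\times}$ such that
\[
uw_j^{\ell}=1-t_j(\rho-1)^{\ell}.
\]
This is the converse of Proposition~\ref{epp_general}. To establish it one identifies the residue algebra $S_{m_j}/m_jS_{m_j}$ with $\kappa(m_j)[X]/(X^{\ell}-X+\bar b_j)$ via Artin--Schreier theory, lifts $\bar b_j$ to a unit $t_j\in R_{m_j}$ (if the residue extension is trivial one picks any unit $t_j$ whose class lies in $\wp(\kappa(m_j))$), and forms $a_j:=1-t_j(\rho-1)^{\ell}$. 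By \ref{epp_general} the extensions $K(\sqrt[\ell]{u})$ and $K(\sqrt[\ell]{a_j})$ are both unramified at $R_{m_j}$ with the same residue algebra, hence they generate the same cyclic subgroup of $K^{\times}/K^{\times\ell}$; therefore $u=a_j^{k_j}w_j^{\ell}$ for some $w_j\in K^{\times}$ and some $k_j$ coprime to~$\ell$. Using the expansion
\[
a_j^{k_j}=\bigl(1-t_j(\rho-1)^{\ell}\bigr)^{k_j}=1-k_jt_j(\rho-1)^{\ell}\bigl(1+O((\rho-1)^{\ell})\bigr),
\]
and the fact that $k_j$ is a unit in $R_{m_j}$ (since $\ell\nmid k_j$ and $\operatorname{char}\kappa(m_j)=\ell$), we may re-absorb the exponent $k_j$ into $t_j$, so after replacing $t_j$ by the unit $k_jt_j+O((\rho-1)^{\ell})$ we get $uw_j^{\ell}=1-t_j(\rho-1)^{\ell}$; comparing valuations at every height one prime of $R_{m_j}$ shows that $w_j$ can be taken to lie in $R_{m_j}^{\times}$.

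\textbf{Global assembly.} Using \ref{rho}, let $N_j$ be any integer exceeding $\ell\cdot\nu_j(\rho-1)$, so that any $w\in R$ satisfying $w\equiv w_j\pmod{m_j^{N_j}}$ will also satisfy
\[
uw^{\ell}\equiv uw_j^{\ell}=1-t_j(\rho-1)^{\ell}\pmod{m_j^{N_j}},
\]
and in particular $(1-uw^{\ell})/(\rho-1)^{\ell}$ is congruent to $t_j$ modulo $m_j$, hence a unit. By the Chinese Remainder Theorem for the pairwise coprime ideals $m_1,\dots,m_r,m_{r+1}^{N_{r+1}},\dots,m_n^{N_n}$ of $R$, choose $w\in R$ with
\[
w\equiv 1\pmod{m_i}\ (1\le i\le r),\qquad w\equiv w_j\pmod{m_j^{N_j}}\ (r+1\le j\le n),
\]
and set $v:=uw^{\ell}\in R$. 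Then $L=K(\sqrt[\ell]{v})$, $v\equiv u\pmod{m_i}$ for $i\le r$, and the local step shows $(1-v)/(\rho-1)^{\ell}\in R_{m_j}^{\times}$ for each $j\ge r+1$.

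\textbf{Main obstacle.} The only substantive difficulty is the local step: producing the Kummer representative of the form $1-t_j(\rho-1)^{\ell}$ with $t_j$ a unit, i.e., proving the converse to \ref{epp_general}. This requires the structure theorem that identifies unramified cyclic degree $\ell$ extensions of the regular local ring $R_{m_j}$ with Artin--Schreier classes on $\kappa(m_j)$, together with a careful comparison of $\ell$-power classes in $K^{\times}/K^{\times\ell}$ that uses the exact value $\nu(\rho-1)=\nu(\ell)/(\ell-1)$ from \ref{rho}. All subsequent steps are formal: once the local representatives are in hand, CRT and a valuation-level bookkeeping complete the proof.
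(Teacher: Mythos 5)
Your proof is correct and takes essentially the same approach as the paper's: both write $v=u\theta^\ell$ for some $\theta\in R$, use Propositions~\ref{epp}/\ref{epp_general} at each $m_j$ to produce a local Kummer representative of the form $1-t_j(\rho-1)^\ell$ with $u\theta_j^\ell=1-t_j(\rho-1)^\ell$ in $K_{m_j}$, and finish by a CRT approximation of the local data by a global $\theta$. You are somewhat more scrupulous about two points the paper elides: the prime-to-$\ell$ exponent $k_j$ in the comparison of Kummer classes (two degree-$\ell$ Kummer extensions coincide only up to a prime-to-$\ell$ power in $K^\times/K^{\times\ell}$), which you absorb into the unit by binomial expansion; and the $m_j$-adic approximation order $N_j$ needed so that $(1-v)/(\rho-1)^\ell$ really lands on a unit --- the paper's printed ``$\theta\equiv w_j$ modulo $m_j$'' is evidently a slip for approximating $\theta_j$ to high $m_j$-adic order, and your formulation is the correct reading.

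One caveat, shared by the paper's proof: both of you need the Artin--Schreier representative $t_j$ (the paper's $w_j$) to be a \emph{unit} at $m_j$, which is automatic when the residue algebra of $L$ at $m_j$ is a field (then $\overline{t_j}\notin\wp(\kappa(m_j))$, in particular $\overline{t_j}\neq 0$). When $L/K$ is unramified but \emph{split} at $m_j$ the class $\overline{t_j}$ must lie in $\wp(\kappa(m_j))$; your fallback of choosing a unit with class in $\wp(\kappa(m_j))$ is unavailable when $\kappa(m_j)\cong\mathbb F_\ell$, since there $\wp(\kappa(m_j))=\{0\}$. In that case the lemma's conclusion is actually unattainable (by Proposition~\ref{epp}, any $v=1-t(\rho-1)^\ell$ with $t$ a unit yields a nonsplit residue over $\mathbb F_\ell$), so the split residue case must be tacitly excluded. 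This is a soft spot in the lemma as stated, not a defect peculiar to your argument.
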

  
  \begin{proof} For a maximal ideal $m$ of $R$, let $K_m$ denote the field of factions of the completion 
  of $R$ at $m$.
   
  Let $r+1 \leq j \leq n$. Since char$(\kappa(m_j)) = \ell$ and 
  $L/K$ unramified at $m_j$, the residue field of $L$ at $m_j$ is $\kappa(m_j)[X]/(X^\ell - X + \overline{w_j})$ for some
  $w_j \in R_{m_j}$.  Since the residue field of $K[X]/(X^\ell - (1 - w_j(\rho - 1)^\ell))$ is 
  isomorphic to $\kappa(m_j)[X]/(X^\ell - X + \overline{w_j})$ (\ref{epp}, \ref{epp_general}), $L \otimes K_{m_j} 
  \simeq K_{m_j}[X]/(X^\ell - (1 - w_j(\rho - 1)^\ell))$. Since char$(K) \neq \ell$ and $L = K(\sqrt[\ell]{u})$, 
  there exists $\theta_j \in K_{m_j}$ such that $u\theta_j^\ell = 1 - w_j( \rho - 1)^\ell$. 
  Let $\theta \in R$ be such that $\theta \equiv 1 $ modulo $m_i$ for $1 \leq i \leq r$
  and $\theta \equiv w_j$ modulo $m_j$ for $r+1 \leq j \leq n$.
  Then $v = u \theta^\ell$ has the required properties.  
  \end{proof}
 
 The following is a generalization of a result of Saltman (\cite[Proposition 0.3]{S3}). 
 \begin{lemma} 
 \label{saltman_exact_seq}
 Let $A$ be a UFD. For $1 \leq i \leq n$,  let  $I_i = (a_i) \subset A$ with gcd$(a_i, a_j)   = 1$ for all $i \neq j$. 
For each $i < j$, let $I_{i j} = I_i +  I_j$. Suppose that the ideals $I_{i, j}$ are
comaximal. Then
$$
A \to \bigoplus_i A/I_i  \to \bigoplus_{i < j} A/I_{ij}
$$
 is exact, where for $i < j$, the map from $A/I_i \oplus A/I_j \to A/I_{ij}$ is 
 given by $(x, y) \mapsto x - y$.
 \end{lemma}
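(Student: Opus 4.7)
\medskip

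\noindent\textbf{Proof proposal.} The plan is to proceed by induction on $n$, treating the compatibility condition (vanishing in $\bigoplus_{i<j} A/I_{ij}$) as the gluing data to be lifted. The inclusion of the image of $A \to \bigoplus_i A/I_i$ into the kernel of the second map is automatic: for $x\in A$, the image $(x+I_i)-(x+I_j)$ vanishes in $A/I_{ij}$. The content is therefore the reverse inclusion.

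For the base case $n=2$, given $(x_1,x_2)$ with $x_1-x_2 \in I_1+I_2$, write $x_1-x_2 = y_1 + y_2$ with $y_i \in I_i$ and take $x = x_1 - y_1 = x_2 + y_2$. For the inductive step, assume compatible data $(x_1,\ldots,x_n)$ is given. By the induction hypothesis applied to the first $n-1$ components, choose $x \in A$ with $x \equiv x_i \pmod{I_i}$ for $1 \le i \le n-1$. It then remains to modify $x$ by an element of $\bigcap_{i<n} I_i$ so as to repair the congruence modulo $I_n$; that is, we need to show
$$
x - x_n \;\in\; I_n \,+\, \bigcap_{i<n} I_i.
$$

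The key step is the following. For each $i<n$ the hypothesis $x_i \equiv x_n \pmod{I_{in}}$ combined with $x \equiv x_i \pmod{I_i}$ gives $x \equiv x_n \pmod{I_{in}}$, so $x - x_n \in \bigcap_{i<n}(I_i + I_n)$. Reduce modulo $I_n$: the images $\bar I_{in} = (I_i+I_n)/I_n = (\bar a_i)$ in $A/I_n$ are pairwise comaximal, because by the standing hypothesis $I_{in}+I_{jn} = A$ for $i \ne j$. The classical Chinese Remainder Theorem in $A/I_n$ then gives
$$
\bigcap_{i<n} \bar I_{in} \;=\; \prod_{i<n} \bar I_{in} \;=\; (\bar a_1 \cdots \bar a_{n-1}),
$$
so the image of $x-x_n$ in $A/I_n$ lies in $(\overline{a_1\cdots a_{n-1}})$. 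Lifting back, $x - x_n \in I_n + (a_1\cdots a_{n-1})$. Finally, since $A$ is a UFD and the $a_i$ are pairwise coprime, $(a_1\cdots a_{n-1}) = \bigcap_{i<n} I_i$, which is the required identity. Writing $x - x_n = w + z$ with $w \in I_n$ and $z \in \bigcap_{i<n} I_i$, the element $x' := x - z$ satisfies $x' \equiv x_i \pmod{I_i}$ for $i<n$ and $x' \equiv x_n \pmod{I_n}$.

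I expect the only delicate point to be the CRT reduction in $A/I_n$: one must check that the comaximality hypothesis on the $I_{ij}$ passes to pairwise comaximality of the principal ideals $(\bar a_i)$ in the quotient $A/I_n$, and that the identification $\bigcap_{i<n} I_i = (a_1\cdots a_{n-1})$ really uses only the UFD property together with pairwise coprimality of the $a_i$. Once these two points are in place the induction runs without further obstacle.
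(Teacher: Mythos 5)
Your proposal is correct and follows essentially the same inductive strategy as the paper: apply the inductive hypothesis to the first $n-1$ components, then use the pairwise coprimality in the UFD to write $\bigcap_{i<n} I_i = (a_1\cdots a_{n-1})$, reduce modulo $I_n$, and apply CRT to the pairwise comaximal principal ideals $(\bar a_i)$ in $A/I_n$. The only cosmetic difference is that the paper phrases the key step as the ideal identity $I_1 \cap \cdots \cap I_{n-1} + I_n = \bigcap_{i<n}(I_i+I_n)$ and then formally invokes the $n=2$ case for the coprime principal ideals $(a_1\cdots a_{n-1})$ and $(a_n)$, whereas you carry out the same reduction directly to repair the congruence modulo $I_n$.
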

 
 \begin{proof} Proof by induction on $n$. The case  $n = 2$ is in  (\cite[Lemma  0.2]{S3}).
 Assume that $n \geq 3$. Let $a_i \in A/I_i$ maps to zero in $\oplus A/ I_{ij}$.
 By induction, there exists $b \in A$ such that $b = a_i \in A/I_i$ for $1 \leq i \leq n-1$.
 We claim that  $I_1 \cap \cdots \cap I_{n-1} + I_n = (I_1 + I_n) \cap \cdots \cap (I_{n-1} + I_n)$.
 Since  both sides contain $I_n$,  it is enough to prove  the equality   after 
 going modulo $I_n$. Since gcd$(a_i, a_j) = 1$ for all $i \neq j$, we have $I_1 \cap \cdots \cap I_{n-1} 
 = Aa_1\cdots a_{n-1}$ and hence $I_1 \cap \cdots  \cap I_{n-1} + I_n / I_n = (A/I_n)\overline{a}_1 \cdots 
 \overline{a}_{n-1}$. Since $I_{ij}$ are comaximal, $I_{in} /I_n = (A/I_n)\overline{a}_i$ are comaximal for 
 $1 \leq i \leq n-1$ and hence $(A/I_n)\overline{a}_1 \cdots \overline{a}_{n-1} = (A/I_n)\overline{a}_1 \cap 
 \cdots \cap (A/I_n)\overline{a}_{n-1}$. Let $b_1 \in A/(I_1 \cap \cdots \cap I_{n-1} )$ be the image of $b$. 
 Then, by the case $n = 2$,  there exists $a \in A$ such that $a = b_1 \in  A/I_1 \cap \cdots \cap I_{n-1} $ 
 and $ a =  a_n \in A/I_n$.Thus $a$ has the required properties. 
 \end{proof}

 \section{Central simple algebras}
 \label{csa}
   
 Let $K$ be a field, $L/K$ a cyclic extension of degree $n$ with $\sigma \in $ Gal$(L/K)$ a generator and $b \in  K^*$. 
 Let $(L, \sigma, b)$ denote the cyclic algebra $L \oplus Lx \oplus \cdots \oplus Lx^{n-1}$
 with relations $x^n = b$, $x \lambda  = \sigma(\lambda) x$ for all $\lambda\in L$.  
 Then $(L, \sigma, b)$ is  a central simple 
 algebra over $K$ and represents an element in the $n$-torsion subgroup  $\Brn(K)$ of 
 the Brauer group $\Br(K)$ (\cite[Theorem 18, p. 98]{Albert}). 
  Suppose that $n$ is coprime to char$(K)$ and $K$ contains  a primitive $n^{\rm th}$ root
 of unity. Then  $L = K(\sqrt[n]{a})$ for some $a \in K^*$.
 Fix a primitive  $n^{\rm th}$ root of unity $\rho$ in $K$.  Let $\sigma$ be the generator of Gal$(L/K)$ given by 
 $\sigma(\sqrt[n]{a})  = \rho \sqrt[n]{a}$.  Then, the cyclic algebra $(L, \sigma, b)$ is denoted by 
 $[a, b)$. 
 Suppose that $n$ is prime and equal to   char$(K)$. Then,   $L = K[X]/(X^n - X + a)$
 for some $a \in K$. 
   If $\sigma$ is the generator of Gal$(L/K)$  given by $\sigma(x) = x +1$, 
 then the cyclic algebra $(L, \sigma, b)$ is also denoted by $[a, b)$.

 For any   Galois module $M$  over $K$, let $H^n(K, M)$ denote the 
Galois cohomology of   $K$ with coefficients in $M$. 
 Let $K$ be a field and $\ell$ a prime. 
 Let $\Z/\ell(i)$ be the Galois modules over $K$  as in (\cite[\S 0]{K2}).
 We have a canonical isomorphisms $H^1(K, \Z/\ell) \simeq {\rm Hom}_{\rm cont}({\rm Gal}(K^{\rm ab}/K), \Z/\ell)$
 and $\Brl(K) \simeq H^2(K, \Z/\ell(1))$, where $K^{\rm ab}$ is the maximal abelian extension of $K$ (\cite[\S 0]{K2}). 

 Suppose $A$ is a regular domain   with field of fractions $F$. We say that an element $\alpha \in H^2(F, \Z/\ell(1))$
 is {\it unramified} on $A$ if $\alpha$ is represented by a central simple algebra over $F$ which comes from 
 an Azumaya algebra over $A$. If it is not unramified, then we say that $\alpha$ is {\it ramified} on $A$.
 Suppose $P$ is a prime ideal of $A$ and $\alpha \in H^2(F, \Z/\ell(1))$. We say that $\alpha$ is {\it unramified}
 at $P$ if $\alpha$ is unramified at $A_P$.  If $\alpha$ is not unramified at $P$, then 
 we say that $\alpha$ is {\it ramified} at $P$. 
  Suppose that $\alpha$ is unramified at $P$. 
 Let $\AA$ be an Azumaya algebra over $A_P$ with the class of $\AA \otimes_{A_P} F$ equal to $\alpha$.
The algebra $ \overline{\alpha} = \AA \otimes_{A_P} (A_P/PA_P)$ is called the {\it specialization} of $\alpha$ at $P$.
 Since $A_P$ is  a regular local ring, the class of $\overline{\alpha}$ is independent of the choice of 
 $\AA$. Let $a, b \in F$ and $\alpha = [a, b) \in H^2(F,\Z/\ell(1))$. If the cyclic extension $[a)$ is unramified at $P$
  and $b$ is a unit at $P$,  then $\alpha$ is unramified at $P$ and  the specialization of $\alpha$ at $P$ is $[a(P),  b(P))$,
   where $[a(P))$ is the residue field of $[a)$ at $P$ and $ b(P)$ is the image of $b$ in $A_P/PA_P$. 
 
 Suppose that $R$ is a discrete valuation ring with field of fractions $K$ and residue field $\kappa$.
 Let $\ell$ be a prime not equal to char$(K)$. Suppose that char$(\kappa) \neq \ell$  or char$(\kappa) = \ell$ with
  $\kappa =  \kappa^\ell$. 
  Then there is a {\it residue homomorphism } $\partial : H^2(K, \Z/\ell(1)) \to H^1(\kappa, \Z/\ell)$ (\cite[\S 1]{K2}).
 Further a class $\alpha \in H^2(K, \Z/\ell(1))$ is unramified at $R$ if and only if $\partial(\alpha) = 0$.
 Let $a, b \in K^*$. If $[a)$ is unramified at $R$, then $\partial([a, b)) = [a(\nu))^{\nu(b)}$, where $\nu$ is the 
 discrete valuation on $K$. In particular if $[a)$ is unramified on $R$ and $\ell$ divides $\nu(b)$, then 
 $[a, b)$ is unramified on $R$.

 \begin{lemma}
\label{2dim_unramified} (cf. \cite[Lemma 3.1]{LPS})
Let $A$ be a regular  ring of dimension 2 and $F$ its field of fractions. 
Let $\ell$ be a   prime not equal to char$(F)$ and  $\alpha \in H^2(F, \Z/\ell(1))$.
If $\alpha$ is unramified at all height one prime ideals of $A$, then 
$\alpha$ is unramified on $A$. 
 \end{lemma}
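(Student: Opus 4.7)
The plan is to reduce to the case of a regular local ring and then invoke a purity/maximal-order argument of Auslander--Goldman. Being unramified on $A$ is a local condition on $\Spec(A)$: an element of $\Br(F)$ lies in the image of $\Br(A)$ if and only if it lies in the image of $\Br(A_m)$ for every maximal ideal $m \subset A$, since Azumaya $A$-algebras are determined by their localizations at maximal ideals (and conversely patch from them). Moreover, whether $\alpha$ is unramified at a height one prime $P$ only depends on $A_P$, so the hypothesis is preserved after localization. Hence I may assume $A$ is a $2$-dimensional regular local ring with maximal ideal $m$, residue field $\kappa$, and fraction field $F$.

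Next I would represent $\alpha$ by a central simple $F$-algebra $D$ whose degree is a power of $\ell$, and pick a maximal $A$-order $\Lambda \subset D$. Since $A$ is a normal noetherian domain, $\Lambda$ is a reflexive $A$-module. For each height one prime $P$ of $A$, the local ring $A_P$ is a discrete valuation ring, and the hypothesis that $\alpha$ is unramified at $P$ combined with the classical structure of maximal orders over DVRs forces $\Lambda_P$ to be an Azumaya $A_P$-algebra; in particular $\Lambda$ is locally free in codimension one.

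Because $\Lambda$ is reflexive over the $2$-dimensional regular local ring $A$, it has depth $2$, and hence by Auslander--Buchsbaum it is a free $A$-module of finite rank. Since the Azumaya locus of a locally free $A$-algebra is open in $\Spec(A)$ and $\Lambda$ is Azumaya at every height one prime, it is Azumaya on the punctured spectrum $\Spec(A) \setminus \{m\}$. It remains only to verify that $\Lambda \otimes_A \kappa$ is central simple over $\kappa$, i.e.\ that $\Lambda$ is Azumaya at the closed point as well. This is exactly the content of the Auslander--Goldman purity theorem for the Brauer group of a $2$-dimensional regular local ring: the map $\Br(A) \to \Br(F)$ is injective and its image consists of those classes unramified at every height one prime. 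Invoking this identifies $\alpha$ with a class in $\Br(A)$, so $\alpha$ is unramified on $A$.

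The main obstacle is precisely the last step---promoting Azumaya-ness from codimension one to the closed point---which fails without the regularity and dimension hypotheses on $A$; this is where the reflexive-equals-free dichotomy in dimension two, and consequently Auslander--Goldman purity, does the essential work.
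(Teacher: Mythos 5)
The paper does not prove this lemma; it simply cites \cite[Lemma 3.1]{LPS}, which is in turn the classical Auslander--Goldman purity theorem for the Brauer group of a $2$-dimensional regular local ring. Your argument is a correct instance of the standard proof of that result, so there is no divergence of method to report.

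A small remark on your first reduction. The claim that a class in $\Br(F)$ lying in $\Br(A_m)$ for every maximal ideal $m$ automatically lies in $\Br(A)$ is not a tautological ``patching'' statement, and as phrased it begs the question a little. The clean way to make it rigorous is the very device you introduce next: choose a maximal $A$-order $\Lambda$ in $D$ from the start. For a Noetherian normal domain $A$, localization at any prime of a maximal order is again a maximal order, maximal orders over a $2$-dimensional regular \emph{local} ring representing an unramified class are Azumaya (this is the content of Auslander--Goldman), and Azumaya-ness of the fixed global $\Lambda$ is a genuinely local condition on $\Spec A$. So the argument you give in the local case already proves the semi-local statement directly, with no need for the preliminary localization step. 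In other words, you should run the maximal-order argument on $A$ itself rather than first asserting gluing and then introducing $\Lambda$. Everything else --- reflexivity of the maximal order, depth $2$ hence freeness by Auslander--Buchsbaum, openness of the Azumaya locus, and the final promotion to the closed point --- is exactly the Auslander--Goldman argument, and the fact that it is order-theoretic rather than cohomological is what makes it apply without any restriction on the residue characteristic relative to $\ell$, which is important here since the paper allows $\ell$ to equal the residue characteristic at some closed points.
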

 
 \begin{lemma}
 \label{ind_ell}
 Let $R$ be a complete discrete valued field with field of fractions $K$ and residue field 
 $\kappa$. Let $\ell$ be a prime not equal to char$(\kappa)$. Let $D$ be a central simple algebra of index $\ell$ over $K$.
Suppose that $D$ is ramified at $R$.
If  $L/K$ is  the  unramified extension of $K$ with residue field equal to the residue of $D$ at $R$, 
then $D \otimes L$ is a split algebra.  
 \end{lemma}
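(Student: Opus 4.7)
The plan is to prove the stronger statement that $L$ embeds into $D$ as a maximal subfield, from which the splitting of $D \otimes_K L$ is immediate. Replacing $D$ by its underlying central division algebra (which has the same Brauer class and the same residue at $R$), I may assume $D$ has degree $\ell$ over $K$.

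Since $K$ is complete and $\ell$ is coprime to $\mathrm{char}(\kappa)$, the valuation on $K$ extends uniquely to $D$, and $D$ contains a unique maximal $R$-order $\Lambda$, which is a non-commutative discrete valuation ring with Jacobson radical $J = J(\Lambda)$. Let $e$ be the ramification index of this extended valuation and $f = [\Lambda/J : \kappa]$ the residue dimension. The fundamental identity $ef = \ell^2$ holds, and moreover $e$ divides $\ell$, since for any uniformizer $\pi_\Lambda$ of $\Lambda$ the reduced norm $N_{D/K}(\pi_\Lambda) \in K^*$ has valuation $\ell/e$, which must be a nonnegative integer. Because $D$ is ramified at $R$ as a Brauer class, $e > 1$, so $e = f = \ell$, and $\overline{E} := \Lambda/J$ is a separable field extension of $\kappa$ of degree $\ell$.

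Next, I would identify $\overline{E}$ with the residue extension $\overline{L}/\kappa$ cut out by $\partial(D)$. By the classical description of the $\ell$-primary Brauer group of a complete discretely valued field (with residue characteristic coprime to $\ell$), any ramified class of index $\ell$ is Brauer equivalent to a cyclic algebra of the shape $(E^{\mathrm{un}}/K, \sigma, \pi)$, where $E^{\mathrm{un}}/K$ is the unramified lift of $\overline{E}$; a direct residue computation in the spirit of \S\ref{csa} shows that the residue of this cyclic algebra is the character cutting out $\overline{E}$. Comparing with the hypothesis on $\partial(D)$ then forces $\overline{E} \simeq \overline{L}$. Applying Hensel's lemma in the $J$-adically complete ring $\Lambda$, I lift a primitive element of $\overline{L}/\kappa$ to an element $\alpha \in \Lambda$ satisfying a monic separable polynomial of degree $\ell$ over $R$; the subring $R[\alpha] \subset \Lambda$ is a complete DVR with residue field $\overline{L}$, and its fraction field $K(\alpha) \subset D$ is the unique unramified degree-$\ell$ extension of $K$ with residue $\overline{L}$, namely $L$.

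Finally, since $L \subset D$ and $[L:K] = \ell = \deg D$, $L$ is a maximal subfield of $D$, whence $D \otimes_K L \simeq M_\ell(L)$ is split. The main obstacle is the identification $\overline{E} \simeq \overline{L}$, which connects the residue of the non-commutative maximal order to the Brauer-theoretic residue map; this rests on the structure theorem for Brauer groups of complete discretely valued fields with residue characteristic coprime to $\ell$ (classical, e.g.\ via Reiner's \emph{Maximal Orders}) and is the one substantive non-formal input.
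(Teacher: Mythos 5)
Your proof is correct, but it takes a genuinely different route from the paper. The paper's argument is short and cohomological: by \cite[Lemma 4.1]{PPS} one can write $D = D_0 \otimes (L, \sigma, \pi)$ with $D_0$ unramified at $R$ and $\pi$ a parameter; then \cite[Lemma 4.2]{PPS} gives the index formula $\ell = \mathrm{ind}(D) = \mathrm{ind}(D_0 \otimes L)\,[L:K]$, and since $D$ is ramified $[L:K]=\ell$, forcing $D_0 \otimes L$ to split, so $D_0 = (L,\sigma,u)$ and $D = (L,\sigma,u\pi)$ is visibly split by $L$. Your argument instead works inside the division algebra itself, via the theory of maximal orders: you pass to the unique maximal order $\Lambda$, compute $e = f = \ell$ from $ef = \ell^2$, $e \mid \ell$, and $e > 1$, identify the residue division ring $\Lambda/J$ (necessarily a field of degree $\ell$, since $\ell$ is prime, hence separable since $\ell \neq \mathrm{char}\,\kappa$) with $\overline{L}$ via the residue map on the Brauer group, and then lift this residue field into $\Lambda$ to realize $L$ as a maximal subfield of $D$. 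The paper's approach is quicker once the two cited lemmas from [PPS] are granted, and stays entirely at the level of Brauer classes; yours is more self-contained from the viewpoint of the valuation theory of division algebras over complete fields (Reiner's \emph{Maximal Orders}), and produces the slightly more concrete conclusion that $L$ actually embeds in $D$. The only step you should be a little more careful with is the Hensel lifting in the noncommutative complete local ring $\Lambda$: the existence of an ``inertia subfield'' $L \hookrightarrow \Lambda$ with residue field $\overline{L}$ is a standard result for maximal orders over complete DVRs (see Reiner, \S14), but it is not a direct application of the commutative Hensel lemma, so it is worth citing rather than invoking Hensel by name.
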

 
 \begin{proof} We have $ D= D_0 \otimes (L, \sigma, \pi)$ for some generator of Gal$(L/K)$, $\pi$ a
 parameter in $R$ and $D_0$ unramified at $R$ (cf. \cite[Lemma 4.1]{PPS}). 
 Further  $\ell = $ ind$(D) = $ ind$(D_0 \otimes L)[L : K]$ (cf. \cite[Lemma 4.2]{PPS}).
 Since $D$ is ramified at $R$, $[L : K] = \ell$ and hence $D_0 \otimes L = 0$.
 Hence $D_0 = (L, \sigma, u)$ for some $u \in K$ and $D = (L, \sigma, u\pi)$.
 Thus $D \otimes L$ is a split algebra.
 \end{proof}

  \begin{lemma}
 \label{curve_point_23} Let $A$ be a  complete regular local ring of dimension 2 
 with field of fractions $F$ and residue field  $\kappa$. Suppose that $\kappa$ is  a finite field.
  Let $m = (\pi, \delta)$ be the maximal ideal of $A$.
  Let $\ell$ be a prime not equal to char$(F)$ and  $\alpha  = [ a,   b ) \in H^2(F, \Z/\ell(1))$ for some $a, b \in F^*$.
  Suppose that \\
  $\bullet$ if char$(\kappa)= \ell$, then the  cyclic extension $[a )$ is  unramified on  $A$ \\
  $\bullet$  $\alpha$ is unramified on $A$ except possibly at $\delta$ \\
$\bullet$ the specialization of $\alpha$ at $\pi$ is unramified on $A/(\pi)$. \\
Then   $\alpha = 0 $.
\end{lemma}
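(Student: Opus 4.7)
The plan is to prove $\alpha = 0$ by showing that $\alpha$ is unramified at every height-one prime of $A$, then invoking the vanishing of $\Br(A)\{\ell\}$. Indeed, since $A$ is a complete regular local ring with finite residue field $\kappa$, Azumaya algebras lift from $\kappa$ to $A$, so $\Br(A)\{\ell\} \cong \Br(\kappa)\{\ell\} = 0$. By hypothesis only the residue at $(\delta)$ could be nonzero, so the whole task reduces to showing $\partial_{(\delta)}(\alpha) = 0$ in $H^1(K_\delta, \Z/\ell)$, where $K_\delta := \Frac(A/(\delta))$.

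First, the specialization hypothesis yields a concrete vanishing: since $A/(\pi)$ is a complete discrete valuation ring with finite residue field $\kappa$, the unramified class $\bar\alpha \in \Br(K_\pi)$ lies in $\Br(A/(\pi))\{\ell\} \cong \Br(\kappa)\{\ell\} = 0$, so $\bar\alpha = 0$.

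Next I arrange, without changing the class $\alpha$, that $[a)$ is unramified on all of $A$. When $\mathrm{char}(\kappa) = \ell$ this is already the third hypothesis. When $\mathrm{char}(\kappa) \neq \ell$, I use the bilinear identities $[a,-a) = 0$ (which gives $[a,b) = [a,(-a)^k b)$ for all $k$) and the skew-symmetry $[a,b) = -[b,a)$, together with the freedom to modify either argument by $\ell$-th powers. At a prime $P$ where $[a)$ is ramified, $v_P(a)$ is coprime to $\ell$, so these moves permit solving $k\,v_P(a) + v_P(b) \equiv 0 \pmod{\ell}$ and swapping to make the new $v_P(a) \equiv 0 \pmod{\ell}$. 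Iterating over the finitely many ramification primes---and using that unramifiedness of $\alpha$ at a prime is a property of the class, not of its presentation---yields a presentation of $\alpha$ in which $[a)$ is unramified on $A$.

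Now $\chi := [a(\delta)) \in H^1(K_\delta, \Z/\ell)$ is unramified on $A/(\delta)$, and
\[
\partial_{(\delta)}(\alpha) \;=\; v_\delta(b) \cdot \chi.
\]
By the compatibility of residues with specializations in Kato's framework for two-dimensional regular local rings,
\[
\mathrm{sp}_{(\bar\pi)}\bigl(\partial_{(\delta)}(\alpha)\bigr) \;=\; \partial_{(\bar\delta)}\bigl(\mathrm{sp}_{(\pi)}(\alpha)\bigr) \;=\; \partial_{(\bar\delta)}(\bar\alpha) \;=\; 0.
\]
Since $A/(\delta)$ is complete with residue field $\kappa$, the specialization $H^1(K_\delta, \Z/\ell)^{\mathrm{unr}} \to H^1(\kappa, \Z/\ell)$ is an isomorphism, forcing $\partial_{(\delta)}(\alpha) = 0$. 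Hence $\alpha$ is unramified on $A$, and $\alpha = 0$.

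The main obstacle is the symbol-manipulation reduction in the mixed-characteristic case: although clearing a single prime is straightforward, iterating without reintroducing ramification of $[a)$ at primes that have already been cleared requires careful bookkeeping. Once this is arranged, Kato's residue-specialization compatibility---concretely, the coincidence of the double reduction of the tame symbol in the two possible orders along $(\pi)$ and $(\delta)$---finishes the proof cleanly.
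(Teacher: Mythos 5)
Your overall strategy (force the residue of $\alpha$ at $(\delta)$ to vanish, then use $\Br(A)\cong\Br(\kappa)=0$) is in the right spirit, but as written the argument has three concrete gaps, the last of which is structural.

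First, the reduction in the case $\mathrm{char}(\kappa)\neq\ell$ that ``arranges $[a)$ unramified on $A$'' is not established. Your moves $[a,b)\mapsto[a,(-a)^kb)$ followed by a swap do clear the first slot at a single prime $P$, but after the swap the new first slot is $((-a)^kb)^{-1}$, whose valuation at a different prime $Q$ is $-(k\,v_Q(a)+v_Q(b))$. A single $k$ cannot in general make all of these $\equiv 0\pmod\ell$ at once, and iterating reintroduces ramification at primes already cleared. You flag the ``careful bookkeeping'' but never do it; it is precisely there that the difficulty lies. The paper does not attempt this reduction: for $\mathrm{char}(\kappa)\neq\ell$ it simply invokes \cite[Proposition 3.4]{RS}.

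Second, the ``residue--specialization compatibility'' $\mathrm{sp}_{(\bar\pi)}(\partial_{(\delta)}\alpha)=\partial_{(\bar\delta)}(\bar\alpha)$ is asserted without proof or reference. Something of this kind can be extracted, up to sign, from Kato's degree-three complex (\ref{complex}) applied to $\zeta=\alpha\cdot(\pi)\in H^3(F,\Z/\ell(2))$, but this needs to be carried out; it is not a standard black box, and the sign is exactly what makes the conclusion $\mathrm{sp}_{(\bar\pi)}(\partial_\delta\alpha)=0$ work.

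Third, and most seriously, your entire argument runs through the residue map $\partial_{(\delta)}:H^2(F,\Z/\ell(1))\to H^1(\kappa(\delta),\Z/\ell)$, the formula $\partial_{(\delta)}([a,b))=[a(\delta))^{v_\delta(b)}$, and the equivalence $\partial_{(\delta)}(\alpha)=0\Leftrightarrow\alpha$ unramified at $(\delta)$. The paper only asserts these in \S 3 when $\kappa(\delta)$ is perfect, i.e.\ $\mathrm{char}(\kappa(\delta))\neq\ell$ or $\kappa(\delta)=\kappa(\delta)^\ell$. But $\kappa(\delta)=\Frac(A/(\delta))$ can be an \emph{imperfect} field of characteristic $\ell$: if $\ell\in(\delta)$, then $A/(\delta)\cong\kappa'[[t]]$ and $\kappa(\delta)=\kappa'((t))$ with $[\kappa(\delta):\kappa(\delta)^\ell]=\ell$. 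This case is within the lemma's hypotheses. It is exactly the obstruction the paper's proof is engineered to avoid. For $\mathrm{char}(\kappa)=\ell$ the paper never takes an $H^2$-residue along $(\delta)$: it factors $b=v\delta^n\theta_1^{n_1}\cdots\theta_r^{n_r}$, kills $[a,v)$ and each $[a,\theta_j^{n_j})$ as unramified classes on the complete local ring, is left with $\alpha=[a,\delta^n)$, and only then takes a residue of the specialization $\bar\alpha=[a(\pi),\bar\delta^{\,n})$ along the DVR $A/(\pi)$ --- whose residue field is the finite, hence perfect, field $\kappa$. Your approach would need a substitute for the residue-based bookkeeping at $(\delta)$ before it can cover the statement in the generality asserted.
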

 
 \begin{proof}  Suppose that char$(\kappa) \neq \ell$.
 Then, it follows from  (\cite[Proposition 3.4]{RS}) that $\alpha = 0$ (cf. \cite[Corollary 5.5]{PPS}).

 Suppose that char$(\kappa) = \ell$. 
 Since $F$ is the field of fractions of $A$, without loss of generality, we assume that 
$b \in A$ and not divisible by $\theta^\ell$ for any prime $\theta \in A$. 
Write $b = v\delta^{n} \theta_1^{n_1} \cdots \theta_r^{n_r}$ for some distinct primes $\theta_i \in A$ with 
$(\delta) \neq (\theta_i)$ for all $i$,  $1 \leq   n_i \leq \ell -1$, $0 \leq n \leq \ell-1$ and $v \in A$ a unit.
Since $\kappa$ is a finite field,  $A$ is complete and $[a)$ is unramified on $A$, we have 
$[a, v) =0$ and hence $\alpha = [a, b) = [a, \delta^n \theta_1^{n_1} \cdots \theta_r^{n_r})$.

 Since $[a)$ is unramified on $A$, for any prime $\theta \in A$,   $[a, \theta)$ is unramified on $A$
 except possibly at $\theta$. 
 Let $1 \leq j \leq r$.   Since $\alpha = [a,b) = [a, \delta^n)  \prod [a, \theta_i^{n_i})$, 
 $[a, \delta^n)$ and $[a,\theta_i^{n_i})$ are   unramified at $\theta_j$ for all $i \neq j$, $[a, \theta_j^{n_j})$
 is unramified at $\theta_j$ and hence $[a, \theta_j^{n_j})$ is unramified on $A$ (cf. \ref{2dim_unramified}). 
Since $\kappa$ is a finite field and $A$ is complete, $[ a , \theta_j^{n_j} )  = 0$. 
Thus, we have $\alpha   = [ a, \delta^n )$. 

If  $n = 0$, then $\alpha   = 0$.
Suppose $1 \leq n \leq \ell-1$.  
 Let $\overline{\alpha}$ be the specialization of $\alpha$ at $\pi$.
Since $\alpha = [a, \delta^n)$ and 
$[a)$ is unramified at $\pi$, 
 we have $\overline{\alpha}  = [ a(\pi), \overline{\delta}^n )$,
where $[ a(\pi) )$ is the residue field of $[a )$ at $\pi$ and $\overline{\delta}$ is the image of
$\delta$ in $A_P/(\pi)$.  Since   $\overline{\alpha}$ is unramified on $A/(\pi)$,
$A$ is complete  and $\kappa$ 
is a finite field,   $\overline{\alpha}  = [a(\pi), \overline{\delta}^n)   = 0$.
Since $\partial(\overline{\alpha}) = [a(m))^n = 1$ and $n$ is coprime to $\ell$,  
$[a(m)) = 0$. Since $A$ is complete,    $[a )$ is trivial   and hence $\alpha = 0$.
\end{proof}

We now  recall the  chilly points and  the chilly loops associated to a central simple algebra, due to Saltman
(\cite{S2}, \cite{S3}). Let $\XX$ be a regular integral excellent scheme of dimension 2 and $F$ its field of fractions.
Let $\ell$ be a prime  which is not equal to char$(F)$.  Suppose that $F$ contains a primitive $\ell^{\rm th}$ root of unity.
Let $\alpha \in H^2(F, \Z/\ell(1))$.  Suppose that ram$_{\XX}(\alpha) \subset \{ D_1, \cdots ,D_n\}$  for 
 some regular irreducible curves $D_i$ on $\XX$ with normal crossings. 
Let $P \in D_i \cap D_j$ be a closed point.  If char$(\kappa(P)) \neq \ell$, then, we say that $P$ is a {\it chilly point} of 
$\alpha$ if $ \alpha = \alpha_0 + (u, \pi_i \pi_j^s)$ for some $\alpha_0$ unramified at $P$, $u$ a unit at $P$
and $\pi_i, \pi_j$ primes at $P$ defining $D_i$ and $D_j$ at $P$ respectively. 

Let $\Gamma$ be a  graph with vertices $D_i$'s and edges as chilly points,i.e. 
two distinct  vertices $D_i$ and $D_j$  have an edge between them if there is a chilly point in $D_i \cap D_j$. 
A loop in this graph is called a {\it chilly loop} on $\XX$. 
Let $\XX[\frac{1}{\ell}]$ be the open subscheme 
of $\XX$ obtained by inverting $\ell$. 
Since, by the definition of chilly point, char$(\kappa(P)) \neq \ell$ for any 
chilly point $P$, we have the following  

\begin{prop} 
\label{chillyloops}  (\cite[Corollary 2.9]{S2})
There exists a sequence of blow-ups $\XX' \to \XX$ centered at closed points $P \in \XX[\frac{1}{\ell}]$ such that 
$\alpha$ has no chilly loops on $\XX'$.  
\end{prop}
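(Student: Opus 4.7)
The plan is to eliminate chilly loops via a finite sequence of blow-ups at chilly points, with termination controlled by the first Betti number of the chilly graph.

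First, I would analyze a single blow-up $\XX' \to \XX$ centered at a chilly point $P \in D_i \cap D_j$, at which $\alpha = \alpha_0 + (u,\pi_i\pi_j^s)$ with $\alpha_0$ unramified at $P$, $u$ a unit, and $\pi_i,\pi_j$ local parameters for $D_i,D_j$. In the affine chart with coordinates $(\pi_j,y)$ and $\pi_i = \pi_j y$, the exceptional divisor $E$ is cut out by $\pi_j=0$, and the pullback of $\alpha$ is
\[
\alpha = \alpha_0 + (u,\pi_j^{s+1}y) = \alpha_0 + (s+1)(u,\pi_j) + (u,y).
\]
Thus the ramification of $\alpha$ along $E$ is $[u]^{s+1}$, which vanishes exactly when $s\equiv -1\pmod{\ell}$. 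In that case $E$ is absent from the chilly graph of $\XX'$, the strict transforms of $D_i$ and $D_j$ become disjoint, and the chilly edge at $P$ is simply deleted. Otherwise, the same computation in both affine charts shows that $E$ joins the chilly graph with exactly two potential chilly neighbors, with new chilly exponents $s+1$ at $E\cap D_i^{\mathrm{strict}}$ and $s/(s+1)$ at $E\cap D_j^{\mathrm{strict}}$; the normal crossings assumption forbids any other ramified component from passing through $P$, so no further chilly edges touching $E$ arise. Consequently the first Betti number $b_1$ of the chilly graph is preserved in the non-breaking case, and drops by one when the edge deleted in the breaking case lies on a cycle.

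Second, I would drive the exponent towards $-1\pmod{\ell}$ by iterating on the $D_i$-side. Applying the same analysis to the chilly point $Q_i = E\cap D_i^{\mathrm{strict}}$ of exponent $s+1$: in the chart where $y=\pi_j z$, the pullback becomes $\alpha_0 + (u,z\pi_j^{s+2})$, so the new chilly point on the $D_i$-side has exponent $s+2$. Inductively, $k$ successive such blow-ups produce a chilly point of exponent $s+k$, and after at most $\ell -1$ steps the exponent reaches $-1\pmod{\ell}$. The subsequent blow-up then deletes the corresponding edge from a chilly loop and strictly lowers $b_1$. Since the initial chilly graph is finite, $b_1$ is a non-negative integer that can be driven to zero in finitely many blow-ups, all taking place at chilly points, i.e., at closed points of $\XX[\frac{1}{\ell}]$. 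This produces the desired proper birational model $\XX'\to\XX$ with no chilly loops.

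The main obstacle is controlling the chilly graph under successive blow-ups: one must verify that the only chilly edges incident to a newly introduced exceptional divisor are to the two strict transforms of the components meeting at the blow-up center, so that $b_1$ is genuinely preserved in the non-breaking case and truly drops in the breaking case. This rests on the normal crossings hypothesis on $\{D_i\}$, which ensures the fiber over the blow-up center meets only those two components of the ramification locus, together with the explicit local form $(u,\pi_i\pi_j^s)$ of the symbol, from which all chilly exponents along the exceptional divisor can be read off directly.
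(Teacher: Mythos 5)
The paper does not supply its own proof of this proposition; it is quoted verbatim from Saltman \cite[Corollary 2.9]{S2}, and the only internal justification in the paper is the remark preceding it that chilly points by definition lie in $\XX[\frac{1}{\ell}]$, so the centers of the blow-ups stay there. Your proposal is therefore best compared against Saltman's own argument, and it is in fact a correct reconstruction of that argument: the blow-up/coefficient-tracking scheme you use is precisely the mechanism Saltman develops in \cite[\S 2]{S2}.

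Your local computation at a chilly point is right. Writing $\alpha=\alpha_0+(u,\pi_i\pi_j^s)$ with $\alpha_0$ unramified and $u$ a unit at $P$, in the chart $\pi_i=\pi_j y$ one gets $\alpha=\alpha_0+(u,y\,\pi_j^{s+1})$, so the exceptional divisor has residue $(s+1)[u]$ and is ramified if and only if $s\not\equiv -1\pmod{\ell}$; the new chilly exponent on the $D_i$-side is $s+1$, and on the $D_j$-side it is $s/(s+1)$ (units modulo $\ell$). Normal crossings ensures only $D_i$ and $D_j$ pass through the center, so the exceptional curve acquires exactly the two chilly neighbors you claim, and the operation is a subdivision of the chilly graph, preserving $b_1$. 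Iterating on the $D_i$-side drives the coefficient through $s,\,s+1,\,\dots$; since the intermediate values all lie in $\{s,\dots,\ell-1\}$ they never hit $0\pmod\ell$, so each intermediate point is a genuine chilly point, and after at most $\ell-1-s$ steps you reach exponent $-1$, at which point the next blow-up produces an unramified exceptional curve and deletes the edge. Choosing the initial chilly point on a loop makes the deleted edge lie on a cycle (subdivision does not alter this), so $b_1$ drops; finitely many repetitions kill all chilly loops. The only phrasing I would tighten is your claim that ``after at most $\ell-1$ steps the exponent reaches $-1$'': it takes at most $\ell-2$ increments to reach $-1$ and then one more blow-up to break the edge, so the total count is $\le \ell-1$ blow-ups per edge; and you should state explicitly, when $b_1>0$, that you are selecting a chilly point incident to a cycle, so that the final deletion genuinely reduces $b_1$ rather than merely increasing the number of components. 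With these small clarifications the argument is complete and matches Saltman's.
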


Let $K$ be a global field and $\ell$ a prime. Let $\beta \in \Brl(K)$.
 Let $\nu$ be a discrete vacation of $K$,  $K_\nu$ the completion of $K$ at $\nu$ and $\kappa(\nu)$ the residue field at $\nu$. 
  Since $K_\nu$ is a local field, 
 the invariant map gives an isomorphism $\partial_\nu : \Brl(K_\nu) = H^2(K_\nu, \Z/\ell(1))
  \to H^1(\kappa(\nu), \Z/\ell)$. 
 
 \begin{prop}
 \label{global_field} Let $K$ be a global field   and $\ell$ a prime.  If $\ell$ 
 is not equal to  char$(K)$, then assume that $K$ contains a primitive $\ell^{\rm th}$ root of unity $\rho$.
 Let  $\beta \in  \Brl(K)$.   Let $S$ be a finite set of discrete valuations of $K$ containing 
 all the discrete valuations $\nu$ of $K$ with $\partial_\nu(\beta) \neq 0$.  Let $S'$ be a finite set of 
 discrete valuations of $K$ with $S \cap S' = \emptyset$.
 Let $a \in K^*$ and for each $\nu \in S'$,  let    $n_\nu \geq 2$ be  an  integer. 
 Suppose that   for every $\nu \in S$,  $[a )$   is unramified at   $\nu$ with 
 $\partial_\nu(\beta )  = [a(\nu))$.  
Further assume that  if $\ell = 2$, then   $\beta \otimes K_\nu(\sqrt{a}) = 0$ for all real places $\nu$ of $K$. 
 Then there exists $b \in K^*$  
such that \\
$\bullet$   $\beta = [ a, b ) $  \\
 $\bullet$  if $\nu \in S$,   then $\nu(b) = 1$   \\
$\bullet$  if $\nu \in S'$, then  $\nu(b-1) \geq n_\nu$.
 \end{prop}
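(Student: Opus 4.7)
The strategy is to first prove that $\beta$ is split by the cyclic extension $L=[a)$ of $K$, so that $\beta = [a, b_0)$ for some $b_0 \in K^*$, and then adjust $b_0$ by a global norm from $L^*$ (which does not alter the Brauer class) to meet the prescribed local conditions on $b$. The passage from local prescriptions to a global norm will use the Hasse norm theorem for cyclic extensions of prime degree, together with weak approximation in $L$.

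For the splitting, by the Hasse--Brauer--Noether exact sequence it suffices to verify $\beta \otimes L_w = 0$ at every place $w$ of $L$. Complex places are automatic; at a real place with $\ell$ odd, $\Brl(\mathbb{R}) = 0$; at a real place with $\ell = 2$, the hypothesis $\beta \otimes K_\nu(\sqrt a) = 0$ is exactly what is required. At a non-archimedean $\nu \notin S$ the residue $\partial_\nu \beta$ vanishes and the unramified part of $\Brl(K_\nu)$ equals $\Brl(\kappa(\nu)) = 0$ (finite residue field), so $\beta_\nu = 0$; at $\nu \in S$ the assumption $\partial_\nu \beta = [a(\nu))$ identifies the residue of $\beta$ with the residue field extension of $L/K$ at $\nu$, so $\beta \otimes L_w$ is unramified with trivial residue and hence zero. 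Having established $\beta \otimes L = 0$, the identification $\Br(L/K) \simeq K^*/N_{L/K}(L^*)$ for cyclic extensions of prime degree (via the crossed-product description of $[a, b)$) produces some $b_0 \in K^*$ with $\beta = [a, b_0)$.

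To adjust, I construct local norm-targets $c_\nu \in N_{L_\nu/K_\nu}(L_\nu^*)$ for each $\nu \in S \cup S'$. At $\nu \in S$, the identity $\partial_\nu[a, b_0) = \partial_\nu \beta = [a(\nu))$ forces $\nu(b_0) \equiv 1 \pmod \ell$ when $[a(\nu))$ is non-trivial, while no constraint arises when it is trivial; in either case the image of $\nu \circ N_{L_w/K_\nu}$ (equal to $\ell\mathbb{Z}$ in the unramified non-split case, and to $\mathbb{Z}$ in the split case) contains $1 - \nu(b_0)$, so one may pick $c_\nu \in N(L_\nu^*)$ with $\nu(c_\nu) = 1 - \nu(b_0)$, giving $\nu(b_0 c_\nu) = 1$. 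At $\nu \in S'$ we have $\beta_\nu = 0$, hence $b_0 \in N(L_\nu^*)$, and we may simply take $c_\nu = b_0^{-1}$ so that $b_0 c_\nu = 1$. For each relevant $\nu$, lift $c_\nu$ to an element of $(L \otimes_K K_\nu)^*$ whose norm is $c_\nu$; weak approximation in $L^*$ then yields $x \in L^*$ close to all these local choices, and $c := N_{L/K}(x) \in N_{L/K}(L^*)$ is close to $c_\nu$ in each $K_\nu^*$.

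Set $b := b_0 c$. Then $[a, b) = [a, b_0) + [a, c) = \beta + 0 = \beta$ because $c$ is a global norm from $L$, and the local conditions $\nu(b) = 1$ for $\nu \in S$ and $\nu(b - 1) \geq n_\nu$ for $\nu \in S'$ hold provided the approximation is sufficiently close. The main obstacle lies in the bookkeeping of the adjustment step: checking that each $c_\nu$ can indeed be realized inside $N(L_\nu^*)$ with the prescribed valuation or congruence uses both the residue identity on $S$ and the archimedean hypothesis at real places when $\ell = 2$ (the latter entering through the need for $\beta \otimes L = 0$ in the splitting step). The Hasse norm theorem is legitimate here precisely because $[L:K] = \ell$ is prime, so that no Hasse-norm obstruction obstructs the patching.
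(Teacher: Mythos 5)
Your proposal is correct and follows essentially the same route as the paper: establish $\beta \otimes L = 0$ via the Hasse--Brauer--Noether principle by checking local triviality over $L$ (using $\partial_\nu\beta = [a(\nu))$ at $\nu \in S$, the finiteness of the residue fields for $\nu \notin S$, and the real-place hypothesis when $\ell = 2$), then write $\beta = [a, b_0)$, and finally adjust $b_0$ by a global norm from $L$ obtained by weak approximation against local norm-targets at $S \cup S'$. The paper's proof differs only in bookkeeping: it first rescales $c$ by an $\ell^{\rm th}$ power so that $c$ lies in the semi-local ring at $S \cup S'$, then approximates elements of $L$ whose norms hit $\pi_\nu^{\nu(c)-1}$ (for $\nu \in S$) and $c$ itself (for $\nu \in S'$), and finally sets $b = cd^{-1}$; your version works directly with $b_0$ and corrects by $c_\nu$ with $\nu(c_\nu) = 1 - \nu(b_0)$ at $\nu \in S$ and $c_\nu = b_0^{-1}$ at $\nu \in S'$, which is the same computation.
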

 
 \begin{proof} Let $L = [a)$. 
 Let $\nu \in S$.  If   $\partial_\nu(\beta) = 0$, then $\beta \otimes K_\nu = 0$ (\cite[p. 131]{CFANT}).
 Suppose that $\partial_\nu(\beta) \neq 0$. Then $[a(\nu))$ is a  field extension of $\kappa(\nu)$ of degree $\ell$
  and hence $L \otimes_K K_\nu$ is a degree $\ell$ field extension of $K_\nu$.
  Thus   $ \beta \otimes_K (L \otimes K_\nu) = 0$ (cf. \cite[p. 131]{CFANT}).  
 Suppose $\nu$ is a real place of $K$. Then, by the assumption on $a$, 
  $\beta \otimes _K( L \otimes_K K_\nu)  = 0$. 
 Thus $\beta \otimes L = 0$ (cf. \cite[p. 187]{CFANT})   
 and hence there exists $c \in K^*$ such that $\beta = [ a ,   c )$ (cf. \cite[p. 94]{Albert}). 

Let $R$ be the semi local ring at 
 the  discrete valuations in $S \cup  S'$.  
 Replacing $c$ by $c\theta^{n\ell}$ for some $\theta \in K^*$ and $n \geq 1$,
 we assume that  $c \in R$. 
 For  $\nu \in S \cup S'$, let  $\pi_\nu \in R$ be a parameter at $\nu$.  
 Let $\nu \in S$.
 Since $[a)$ is unramified at $\nu$,   $\partial_\nu(\beta) = \partial_\nu([a, c)) = [a(\nu))^{\nu(c)}$. 
 Suppose   $[a(\nu))$ is   non-trivial. Since, by the hypothesis,  $\partial_\nu(\beta) =  [a(\nu) )$, 
  $\nu(c) -1$ is divisible by $\ell$.
  Since $[L: K] = \ell$, $\pi_\nu^{\nu(c) -1}  $ is a norm from    $L\otimes_K K_\nu/K_\nu$. 
 Suppose that   $[a(\nu))$ is   trivial.   
 Then   $L \otimes_KK_\nu$ is the split extension and hence  every element of  $K_\nu$
 is a norm from $L \otimes _KK_\nu/K_\nu$. Thus for each $\nu \in S$, 
 there exists   $x_\nu \in L \otimes_K K_{\nu}$ with norm $ \pi_\nu^{\nu(c) - 1}$. 
 Let $\nu \in S'$. Then $\partial_\nu(\beta) = 0$  and we have 
$\beta \otimes K_{\nu} = [a, c ) \otimes K_\nu = 0$ (\cite[p. 131]{CFANT}).
Hence  $c$ is a norm from $L \otimes_K K_{\nu}$. 
For each $\nu \in S'$, $x_\nu \in L \otimes_K K_{\nu}$ with norm   $c$.  
 Let $z \in L$  be  sufficiently close to $x_\nu$ 
 such that $\nu(N_{L\otimes_KK_\nu}(z) - \pi_\nu^{\nu(c) -1}) \geq \nu(c) $ for 
 all $\nu \in S$ and $\nu(N_{L\otimes_KK_\nu}(z) - c) \geq \nu(c) + n_\nu $ for 
 all $\nu \in S'$.  
 
 Let  $d$ be the norm of $z$ and $b = cd^{-1}$.  Then $\beta =  [ a,  cd^{-1} ) = [a, b)$.  
 Let $\nu \in S$.  Since $\nu(d  - \pi_\nu^{\nu(c) -1}) \geq \nu(c) $,
 we have $ \nu(d) =   \nu(c) - 1$ and hence  $\nu(b)= \nu(cd^{-1}) = 1$.
 Let $\nu \in S'$.  Since   $\nu(d  - c) \geq \nu(c) + n_\nu  \geq 2$,   $\nu(d) = \nu(c)$ and 
 $\nu(b - 1) = \nu(cd^{-1} - 1)  \geq n_\nu$.
  \end{proof}

 \section{A complex of Kato}
\label{kato} 

Let $K$ be a complete discrete valued field with residue field $\kappa$. Let $\ell$ be a prime not equal to 
characteristic of $K$. If $\ell  =$ char$(\kappa)$, then assume that $[\kappa : \kappa^\ell] \leq \ell$.
Then, there is a residue homomorphism $\partial : H^3(K, \Z/\ell(2)) \to H^2(\kappa, \Z/\ell(1))$ (\cite[\S 1]{K2}).
We say that an element  $\zeta \in H^3(K, \Z/\ell(2))$ is {\it unramified } at the discrete valuation of $F$ if $\partial(\zeta) = 0$.

Let    $\XX$ be    a   two-dimensional  regular integral excellent scheme 
 and $F$  the function field of $\XX$.  For $x \in \XX$,  let $F_x$ be the field of fractions of the completion 
 $\hat{A}_x$ of the local ring $A_x$ at $x$ on $\XX$ and  $\kappa(x)$  the 
 residue field at $x$.  Let $x \in \XX$ and $C$ be the closure of $\{ x\}$ in $\XX$.
 Then, we also denote $F_x$ by $F_C$. If the dimension of $C$ is one, then $C$ defines a discrete 
 valuation $\nu_C$ (or $\nu_x$) on $F$.
 Let $\XX_{(i)}$ be the set of points of $\XX$ with the dimension of the closure of $\{x \}$ equal to  $i$.
  Let $\ell$ be a prime not equal to char$(F)$. Suppose that $F$ contains a primitive $\ell^{\rm th}$ root of unity.
 If $P \in \XX_{(0)}$ is a  closed   point   of $\XX$  with char$(\kappa(P)) = \ell$,
  then we assume  $\kappa(P) =  \kappa(P)^\ell$. 
 Let $x \in \XX_{(1)}$.  We have a   {\it residue homomorphism}   $\partial_x : H^3(F, \Z/\ell(2) ) \to H^2(\kappa(x), 
  \Z/\ell(1))$ (\cite[\S 1]{K2}).  We say that an element $\zeta \in H^3(F, \Z/\ell(2))$ is  {\it unramified} at $x$ (or $C$) 
  if $\zeta$ is unramified at $\nu_x$.  
  Further if  $P \in \XX_{(0)}$ is   in the closure of $\{ x \}$,
 then  we have a  {\it residue  homomorphism } $\partial_P : H^2(\kappa(x), \Z/\ell(1)) \to H^1(\kappa(P), 
 \Z/\ell)$ (\cite[\S 1]{K2}).  For $x \in \XX_{(1)}$, if $C$ is the closure of $\{ x \}$, we also denote 
 $\partial_x$ by $\partial_{C}$. An element $\alpha \in H^2(\kappa(x), \Z/\ell(1)) \simeq \Brl(\kappa(x))$ is unramified at 
 $P$ if and only if $\partial_P(\alpha) = 0$. 
We use the additive notation for the group operations on  $H^2(F, \Z/\ell(1))$ and $H^3(F, \Z/\ell(2))$
and multiplicative notation for the group operation on $H^1(F, \Z/\ell)$.  
 
\begin{prop}
\label{complex} (\cite[Proposition 1.7]{K2})  Then  
$$
H^3(F,  \Z/\ell(2))  \buildrel{\partial}\over{\to} \oplus_{x \in \XX_{(1)}}H^2(\kappa(x),  \Z/ \ell(1)) 
\buildrel{\partial}\over{\to} \oplus_{P\in \XX_{(0)}} H^1(\kappa(P), \Z/\ell).
$$
is a complex, where the maps are given by the residue homomorphism. 
\end{prop}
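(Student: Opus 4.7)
The plan is to reduce the statement to a purely local question at each closed point, then to verify it on symbols using the explicit tame-symbol formulas for the residue maps defined in \cite{K2}.

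First I would fix a closed point $P\in\XX_{(0)}$ and observe that both $\partial_x$ (for $x\in\XX_{(1)}$ specializing to $P$) and $\partial_P$ depend only on the completion $\hat{A}_P$: the residue at a curve $C$ is computed in the completion $F_C$ of $F$ at $\nu_C$, and $\partial_P$ depends only on the data of $\kappa(x)$ and the local ring at $P$ inside $C$. Thus, after replacing $\XX$ by $\Spec(\hat{A}_P)$, it suffices to prove that for every $\zeta\in H^3(F,\Z/\ell(2))$,
$$\sum_{x} \partial_P\partial_x(\zeta)=0 \in H^1(\kappa(P),\Z/\ell),$$
where the sum runs over the finitely many height one primes of $\hat{A}_P$. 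Since $\hat{A}_P$ is a $2$-dimensional complete regular local ring, one can choose regular parameters $(\pi,\delta)$ and enumerate the height one primes explicitly.

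Next, by Voevodsky's norm residue isomorphism theorem, every class in $H^3(F,\Z/\ell(2))$ is a sum of symbols $(a_1)(a_2)(a_3)$ with $a_i\in F^*$, so by linearity it suffices to verify the vanishing for a single symbol. For such a symbol, $\partial_x$ can be written explicitly via the tame symbol: if $\pi_x$ is a parameter at $x$ and $u_i$ units with $a_i=u_i\pi_x^{n_i}$, then $\partial_x(a_1)(a_2)(a_3)$ is a linear combination of symbols $[\overline{u}_i,\overline{u}_j)$ and $[\overline{u}_i(\pi_x))$-twists in $H^2(\kappa(x),\Z/\ell(1))$, with coefficients polynomial in the $n_i$. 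Applying $\partial_P$ to each such expression and expanding in terms of parameters at $P$, one obtains symbols in $H^1(\kappa(P),\Z/\ell)$ whose coefficients are products of the valuations $\nu_{x}(a_i)$. A direct combinatorial check — formally identical to the verification that the Gersten complex in Milnor $K$-theory $K_3^M(F)\to \bigoplus K_2^M(\kappa(x))\to\bigoplus K_1^M(\kappa(P))$ is a complex — shows that the contributions from any two curves $x_i,x_j$ through $P$ cancel pairwise, because each cross-term from $\partial_P\partial_{x_i}$ is the negative of the corresponding term from $\partial_P\partial_{x_j}$.

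The hard part is the mixed-characteristic or equal-characteristic $\ell$ case, where $\mathrm{char}(\kappa(P))=\ell$ and $H^n(F,\Z/\ell(n-1))$ is no longer built naively from units: the residue $\partial_x$ is defined via logarithmic differentials (using $d\log$ and Artin-Schreier-Witt theory), and Voevodsky's theorem does not apply in the same form. To handle this, I would use Kato's filtration of $H^3(F,\Z/\ell(2))$ by the subgroups $U^mH^3$ generated by symbols $(1+\pi^m R)(F^*)(F^*)$ of increasing depth at the chosen prime, together with the graded pieces described by differential forms on $\kappa(x)$; the hypothesis $[\kappa(P):\kappa(P)^\ell]\le \ell$ (via $\kappa(P)=\kappa(P)^\ell$ in our setting) ensures that the targets of the residues are the correct logarithmic cohomology groups. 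The complex property can then be checked level by level, using the compatibility of the residue $\partial_x$ with the symbolic description and the $d\log$-differential in the de Rham-Witt complex; the cancellation at $P$ follows from $d^2=0$ in the Koszul-style differential graded structure on $\Omega^\bullet_{\kappa(P)}$. This is the step that requires the detailed analysis of \cite[\S1]{K2}, and is the main technical obstacle.
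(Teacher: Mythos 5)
The paper offers no proof of this proposition; it is cited verbatim from Kato (\cite[Proposition 1.7]{K2}), so there is no in-paper argument to compare against. Your proposal is therefore a sketch of how Kato's proof goes rather than a proof you can check against the source, and should be judged on whether it would actually establish the result on its own.

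The reduction you describe (localize at $P$, pass to the completion $\hat A_P$, reduce to symbols via the norm residue isomorphism theorem, then compute) has the right shape. But two steps are either imprecise or genuinely missing. First, the claim that the contributions from any two curves $x_i,x_j$ through $P$ "cancel pairwise" is not accurate: after normalizing the symbol to be built from parameters and units of $\hat A_P$, the identity $\sum_x \partial_P\partial_x(\zeta)=0$ is a local reciprocity law in which, generically, more than two height-one primes of $\hat A_P$ contribute and only the total vanishes; even the Milnor-$K$-theoretic analogue you invoke ($K_3^M(F)\to\bigoplus K_2^M(\kappa(x))\to K_1^M(\kappa(P))$ is a complex for a two-dimensional regular local ring) is itself a theorem requiring a careful argument, not a routine combinatorial check. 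Second, and more seriously, the wild case $\mathrm{char}\,\kappa(P)=\ell$ --- which you correctly identify as the substantive content --- is not actually handled: the assertion that the cancellation "follows from $d^2=0$" on $\Omega^\bullet_{\kappa(P)}$ is not a derivation, since Kato's residues on the graded pieces of the ramification filtration are defined via logarithmic differentials and Cartier operators, and establishing the complex property requires checking compatibility of the two residue maps across filtration levels (including the jump at the critical level and at $m=0$), none of which reduces to $d^2=0$ on differential forms alone. In the end your proposal defers the essential step to Kato's \S 1, which is in effect what the paper does by citing the result; the outline is a reasonable roadmap, but it does not constitute a proof.
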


   \begin{lemma} 
\label{residue} (\cite[\S 3.2, Lemma 3]{K}, \cite[Lemma 1.4(3)]{K2}) 
Let $x \in \XX_{(1)}$ and $\nu_x$ be the discrete valuation on $F$ at $x$. 
Then $\partial_x : H^3(F_x, \Z/\ell(2)) \to H^2(\kappa(x), \Z/\ell(1))$ is an isomorphism.
Further if  $\alpha \in H^2(F, \Z/\ell(1))$ is  unramified at $x$  and $f \in F^*$,  
then $\partial_x(\alpha \cdot (f)) = \overline{\alpha}^{\nu_x(f)}$.
\end{lemma}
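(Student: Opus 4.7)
The plan is to combine Kato's structure theorem for the Galois cohomology of a complete discretely valued field with a cohomological dimension input for the residue field $\kappa(x)$.

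For the fact that $\partial_x$ is an isomorphism, I would invoke Kato's natural split short exact sequence
\[
0 \to H^n(\kappa(x),\Z/\ell(i)) \to H^n(F_x,\Z/\ell(i)) \xrightarrow{\partial_x} H^{n-1}(\kappa(x),\Z/\ell(i-1)) \to 0
\]
valid for every complete discretely valued field under the standing hypotheses on $\ell$ and the residue field (this is where the assumption $[\kappa(P):\kappa(P)^\ell]\le \ell$ at closed points enters in the equal-characteristic case). Specializing to $n=3$ and $i=2$, surjectivity is immediate and injectivity reduces to showing that the unramified term $H^3(\kappa(x),\Z/\ell(2))$ vanishes. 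Since $x\in\XX_{(1)}$ and $\XX$ is a two-dimensional regular integral excellent scheme with function field $F$ (the function field of a curve over a global or local field), $\kappa(x)$ is either a horizontal curve (a finite extension of $K$) or a vertical curve (a function field of a curve over a finite or local residue field). Under the running hypotheses---in particular that $K$ has no real orderings when $\ell = 2$---each such $\kappa(x)$ has $\ell$-cohomological dimension at most $2$ by class field theory and the theorems of Tate and Kato, so $H^3(\kappa(x),\Z/\ell(2)) = 0$ and $\partial_x$ is bijective.

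For the residue formula, I would pass to $F_x$, choose a uniformizer $\pi$ of $\hat A_x$, and write $f = u\pi^{\nu_x(f)}$ with $u\in \hat A_x^*$. By additivity of the cup product,
\[
\alpha\cdot (f) \;=\; \alpha\cdot (u)\;+\;\nu_x(f)\,\alpha\cdot (\pi).
\]
Since $\alpha$ is unramified at $x$ it lifts to an Azumaya class $\widetilde{\alpha}\in H^2(\hat A_x,\Z/\ell(1))$; cupping with the unit class $(u)\in H^1(\hat A_x,\Z/\ell)$ preserves unramifiedness, so $\partial_x(\alpha\cdot (u))=0$. It then suffices to verify $\partial_x(\alpha\cdot (\pi))=\overline{\alpha}$, which is the defining compatibility of the residue map with the cup product by a uniformizer. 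One way to see it is through the splitting of the sequence above: the unramified section sends $\overline\alpha$ to $\widetilde\alpha$, cup product with $(\pi)$ produces the ``ramified part'' of the decomposition of $H^3(F_x,\Z/\ell(2))$, and $\partial_x$ of that part tautologically returns $\overline\alpha$.

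The main obstacle is establishing the split exact sequence above uniformly in both the $\ell\ne\mathrm{char}\,\kappa(x)$ case (which is classical, going back to Serre and to Merkurjev--Suslin through the Bloch--Kato norm residue isomorphism) and the equal-characteristic case $\ell=\mathrm{char}\,\kappa(x)$, where the definition of $\partial_x$ rests on Kato's construction via the $p$-part of Milnor $K$-theory together with $[\kappa(x):\kappa(x)^\ell]\le\ell$. Since the conclusions are precisely those of \cite[\S 3.2, Lemma~3]{K} and \cite[Lemma~1.4(3)]{K2}, in practice one simply appeals to these references; the sketch above explains how one would re-derive the lemma from the structural decomposition of $H^n(F_x,\Z/\ell(i))$.
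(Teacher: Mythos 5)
The paper does not actually prove this lemma; it is a direct citation to Kato (\cite[\S 3.2, Lemma 3]{K}, \cite[Lemma 1.4(3)]{K2}), so there is no paper proof to compare against. Your sketch of the residue formula is the right argument and is complete: writing $f=u\pi^{\nu_x(f)}$ with $u\in\hat A_x^*$, expanding by bimultiplicativity of the cup product, noting that $\widetilde\alpha\cdot(u)$ lies in $H^3(\hat A_x,\Z/\ell(2))$ and hence has trivial residue, and invoking $\partial_x(\alpha\cdot(\pi))=\overline\alpha$ is exactly how one derives the formula.

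For the isomorphism statement, the plan of using Kato's structural decomposition of $H^n(F_x,\Z/\ell(i))$ and reducing injectivity to $H^3(\kappa(x),\Z/\ell(2))=0$ is the correct framework, but your justification of that vanishing is not quite aligned with the paper. First, \S 4 of the paper is a purely abstract setup ($\XX$ is \emph{any} two-dimensional regular integral excellent scheme with the stated conditions on closed points); the hypothesis that $K$ has no real orderings when $\ell=2$ appears only in the applications of \S 6--7 and is not a ``running hypothesis'' here. As stated for all $x\in\XX_{(1)}$ and arbitrary $\XX$, the isomorphism claim is in fact delicate -- for a horizontal $x$ with $\kappa(x)$ a formally real number field and $\ell=2$ the unramified term need not vanish -- and the paper only ever invokes the isomorphism (in \ref{curve-point}) when the residue field in question is a complete discretely valued field with finite residue field, where the cohomological-dimension bound is automatic. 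Rather than importing hypotheses from later sections, the right move is to note that the lemma is being cited from Kato in exactly that restricted setting. Second, you cite the condition $[\kappa(P):\kappa(P)^\ell]\le\ell$ \emph{at closed points $P$} as what makes the exact sequence work in equal residue characteristic, but that condition governs the residue maps $\partial_P:H^2(\kappa(x),\Z/\ell(1))\to H^1(\kappa(P),\Z/\ell)$ on the one-dimensional field $\kappa(x)$; the condition relevant to $\partial_x:H^3(F_x,\Z/\ell(2))\to H^2(\kappa(x),\Z/\ell(1))$ is the analogous $p$-rank bound on $\kappa(x)$ itself (which holds automatically since $\kappa(x)$ has transcendence degree $1$ over a perfect field). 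Conflating the two obscures what is actually making Kato's sequence available at the point $x$.
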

 
 The following is a consequence of (\ref{complex}). 
 
 \begin{cor}
 \label{complex_P}
  Let $C_1$ and $C_2$  be two irreducible regular curves in $\XX$
 interesting transversely at a closed point $P$. Let $\zeta \in H^3(F,\Z/\ell(2))$.
 Suppose that $\zeta$ is unramified at  all codimension one points of $\XX$ passing 
 through $P$ except possibly at $C_1$ and $C_2$. Then 
 $$\partial_P(\partial_{C_1}(\zeta)) =  \partial_P(\partial_{C_2}(\zeta))^{-1}.$$ 
 \end{cor}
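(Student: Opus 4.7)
The plan is to deduce this corollary directly from the complex in Proposition~\ref{complex} by localizing at the closed point $P$. The only substantive observation is that all codimension-one contributions to the $P$-component of the second residue map, other than those from $C_1$ and $C_2$, vanish under our hypothesis.

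First I would apply Proposition~\ref{complex} to $\zeta \in H^3(F, \Z/\ell(2))$. This yields an element $\partial(\zeta) = \bigoplus_{x \in \XX_{(1)}} \partial_x(\zeta) \in \bigoplus_{x \in \XX_{(1)}} H^2(\kappa(x), \Z/\ell(1))$ whose image under the second residue map is zero in $\bigoplus_{P \in \XX_{(0)}} H^1(\kappa(P), \Z/\ell)$. Reading off the component at the given closed point $P$, and remembering that the residue $\partial_P$ is defined on $H^2(\kappa(x), \Z/\ell(1))$ only when $P$ lies in the closure of $\{x\}$, this says (in the multiplicative convention adopted for $H^1$)
\[
\prod_{\substack{x \in \XX_{(1)} \\ P \in \overline{\{x\}}}} \partial_P\bigl(\partial_x(\zeta)\bigr) \;=\; 1
\]
in $H^1(\kappa(P), \Z/\ell)$.

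Next I would use the hypothesis to throw away most of the factors. The codimension-one points $x$ whose closure contains $P$ correspond exactly to the irreducible curves on $\XX$ passing through $P$. By assumption $\zeta$ is unramified at every such curve except possibly $C_1$ and $C_2$, so $\partial_x(\zeta) = 0$ for every $x$ other than the generic points of $C_1$ and $C_2$; for each such $x$, $\partial_P\bigl(\partial_x(\zeta)\bigr)$ is the trivial element of $H^1(\kappa(P), \Z/\ell)$. The product therefore collapses to
\[
\partial_P\bigl(\partial_{C_1}(\zeta)\bigr) \cdot \partial_P\bigl(\partial_{C_2}(\zeta)\bigr) \;=\; 1,
\]
which is precisely the claimed identity $\partial_P(\partial_{C_1}(\zeta)) = \partial_P(\partial_{C_2}(\zeta))^{-1}$.

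The proof is essentially formal and I do not expect a real obstacle; the only thing to be mindful of is that the transverse intersection of $C_1$ and $C_2$ at $P$ plays no role in the derivation itself (it will be used only in downstream applications where one needs $C_1, C_2$ to be regular branches at $P$), and that the complex of Proposition~\ref{complex} applies because of our standing assumption that $\kappa(P) = \kappa(P)^\ell$ whenever $\mathrm{char}(\kappa(P)) = \ell$, which is precisely what makes the residue maps $\partial_x$ and $\partial_P$ available at $P$.
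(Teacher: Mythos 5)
Your proof is correct and matches the paper's intent exactly: the paper offers no explicit argument, stating only that the corollary ``is a consequence of (\ref{complex})'', and your derivation — reading off the $P$-component of the composite $\partial \circ \partial = 0$ and discarding the terms killed by the unramifiedness hypothesis — is precisely that consequence spelled out. Your side remarks (that transversality is not actually used here, and that the standing $\kappa(P) = \kappa(P)^\ell$ hypothesis is what makes $\partial_P$ available) are also accurate.
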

 
 
%

 \begin{cor}
\label{curve-point} Let   $C$ be an irreducible curve on $\XX$ and  $P \in C$
  with $C$ regular  at $P$.  Let $\zeta \in H^3(F,\Z/\ell(2))$.
 Suppose that $\zeta$ is unramified at  all codimension one points of $\XX$ passing 
 through $P$ except possibly at $C$. If $\kappa(P)$
is finite, then $\zeta \otimes F_P = 0$.   
In particular if $\kappa(P)$ is finite,  then $\zeta$ is unramified at every discrete valuation of $F$ centered at $P$. 
\end{cor}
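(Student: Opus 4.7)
The plan is to apply Proposition \ref{complex} at the closed point $P$ and combine it with Lemma \ref{residue} and the finiteness of $\kappa(P)$. Since $\zeta$ is unramified at every codimension-one point of $\XX$ through $P$ except possibly at $C$, the only non-trivial contribution to the differential at $P$ in the Kato complex comes from $C$; hence $\partial_P(\partial_C(\zeta)) = 1$ in $H^1(\kappa(P), \Z/\ell)$, exactly as in Corollary \ref{complex_P}.

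Let $\pi \in \hat{A}_P$ be a local equation for $C$ at $P$ (available since $C$ is regular at $P$), and extend it to a regular system of parameters $(\pi, \delta)$ of $\hat{A}_P$. The $\pi$-adic valuation on $F_P$ completes to a complete discretely valued field $\widetilde{F_P}$ with residue field $\kappa(C)_P := \Frac(\hat{A}_P/(\pi))$; this $\kappa(C)_P$ is itself a complete discretely valued field with uniformizer $\bar{\delta}$ and residue field $\kappa(P)$. By Lemma \ref{residue}, the residue $\partial_\pi \colon H^3(\widetilde{F_P}, \Z/\ell(2)) \to H^2(\kappa(C)_P, \Z/\ell(1))$ is an isomorphism, and by compatibility of residues $\partial_\pi(\zeta \otimes \widetilde{F_P})$ equals the image of $\partial_C(\zeta) \in \Brl(\kappa(C))$ in $\Brl(\kappa(C)_P)$; its further residue at $\bar{\delta}$ is $\partial_P(\partial_C(\zeta)) = 1$. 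Since $\kappa(P)$ is finite, $\Br(\kappa(P)) = 0$ by Wedderburn, so the unramified Brauer group of the local field $\kappa(C)_P$ is trivial. Hence $\partial_\pi(\zeta \otimes \widetilde{F_P}) = 0$, and the isomorphism of Lemma \ref{residue} yields $\zeta \otimes \widetilde{F_P} = 0$.

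To upgrade this to $\zeta \otimes F_P = 0$, I would show that $\zeta \otimes F_P$ is unramified at every height-one prime $\mathfrak{q}$ of $\hat{A}_P$: the case $\mathfrak{q} = (\pi)$ follows from the previous step, while for any other (non-exotic) $\mathfrak{q}$ one uses compatibility of residues along the flat map $A_P \to \hat{A}_P$, since $\mathfrak{q}$ then contracts to a height-one prime of $A_P$ corresponding to a codimension-one point of $\XX$ through $P$ distinct from $C$, at which $\zeta$ is unramified by hypothesis. Combined with $\Het^3(\hat{A}_P, \Z/\ell(2)) \cong H^3(\kappa(P), \Z/\ell(2)) = 0$ (by proper base change and the cohomological dimension of finite fields, invoking $\kappa(P) = \kappa(P)^\ell$ when $\ell = \mathrm{char}(\kappa(P))$), purity for the two-dimensional complete regular local ring $\hat{A}_P$ then yields $\zeta \otimes F_P = 0$. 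The ``in particular'' claim is immediate, since every discrete valuation of $F$ centered at $P$ extends to a valuation of $F_P$.

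The principal obstacle is the third paragraph: controlling the residue at exotic height-one primes of $\hat{A}_P$ (those contracting to $(0)$ in $A_P$, which can arise even when $\XX$ is excellent) and invoking the correct purity statement in the case $\ell = \mathrm{char}(\kappa(P))$.
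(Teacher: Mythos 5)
Your first two paragraphs reproduce the paper's argument: use Corollary \ref{complex_P} to get $\partial_P(\partial_C(\zeta))=1$, then pass to the completion of $F_P$ at the valuation $\tilde\nu$ defined by $\pi$, observe that $\partial_{\tilde\nu}(\zeta\otimes F_P)=\partial_C(\zeta)\otimes\kappa(C)_P$ is an unramified class in the Brauer group of a local field with finite residue field and hence vanishes, and conclude $\zeta\otimes F_{P,\tilde\nu}=0$ from the fact that $\partial_{\tilde\nu}$ is an isomorphism on the completed field (Lemma \ref{residue}, resp.\ \cite[Lemma 1.4(3)]{K2}).

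The divergence is in your third paragraph, and you correctly sense that this is where the difficulty lies. You propose to show $\zeta\otimes F_P$ is unramified at \emph{every} height-one prime of $\hat A_P$ and then invoke purity for the two-dimensional complete regular local ring $\hat A_P$ together with the vanishing of $H^3_{\mathrm{\acute et}}(\hat A_P,\Z/\ell(2))$. This has two problems which you flag but do not resolve: (i) an exotic height-one prime of $\hat A_P$ contracting to $(0)$ in $A_P$ does not correspond to a codimension-one point of $\XX$ through $P$, so the hypothesis on $\zeta$ does not directly give unramifiedness there; and (ii) the purity / Gersten-type exact sequence you need for $\Z/\ell(2)$ is not available off the shelf when $\ell=\mathrm{char}\,\kappa(P)$ (Kato's twists in mixed characteristic are defined via logarithmic de Rham--Witt sheaves, and the usual absolute purity theorem does not apply). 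The paper avoids both issues by verifying only that $\zeta\otimes F_{P,\nu'}=0$ for each height-one prime $\nu'$ of $\hat A_P$ (the case $\nu'\neq(\pi)$ using the hypothesis, the case $\nu'=(\pi)$ from the previous step) and then citing Saito's Lemma 6.2 (\cite[Lemma 6.2]{Saito}), which is a local-global injectivity statement, proved directly for $H^3$ of the fraction field of a two-dimensional complete regular local ring with finite residue field, that is tailored exactly to this mixed-characteristic setting. In short, your outline is on the right track but the key final input should be Saito's injectivity lemma rather than a general purity argument, and without it the exotic-prime and $\ell=\mathrm{char}$ issues remain genuine gaps.
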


 \begin{proof} Since $C$ is regular at $P$, there exists an irreducible curve $C'$  passing through $P$ and 
 intersecting $C$ transversely at $P$. Then, by (\ref{complex_P}), we have $\partial_P(\partial_{C}(\zeta)) = \partial_P(\partial_{C'}(\zeta))^{\ell-1}$.  Since, by  assumption, $\partial_{C'}(\zeta) = 0$, we have $\partial_P(\partial_{C}(\zeta)) = 1$. 
 
 Let $\pi \in A_P$ be a prime defining $C$ at $P$. Since $C$ is regular at $P$, $A_P/(\pi)$ is a
   discrete valued ring  with residue field $\kappa(P)$ and $\kappa(C)$ is the field 
 of fractions of $A_P/(\pi)$.  Further $\pi$ remains a regular 
 prime in $\hat{A}_P$ and $\hat{A}_P/(\pi)$ is the completion of $A_P/(\pi)$. 
 In particular the field of fractions of $\hat{A}_P/(\pi)$ is the completion  $\kappa(C)_P$ of the field 
 $\kappa(C)$ at the discrete valuation  given by the discrete  valuation ring $A_P/(\pi)$. 
 Let $\tilde{\nu}$ be the  discrete valuation on $F_P$ given by the  height one prime ideal $(\pi)$
 of $\hat{A}$ and $\nu$  the discrete valuation of $F$ given by the  height one prime ideal $(\pi)$
 of $A$. Then the restriction of $\tilde{\nu}$ to $F$ is $\nu$ and the residue field $\kappa(\tilde{\nu})$ at $\tilde{\nu}$
 is  $\kappa(C)_P$. 
 
 Since $\partial_P(\partial_{C}(\zeta)) = 1$,  we have $\partial_C(\zeta) \otimes \kappa(C)_P = 0$ ( \cite[Lemma 1.4(3)]{K2}).
 Since $\partial_{\tilde{\nu}}(\zeta \otimes F_P) = \partial_C(\zeta) \otimes \kappa(C)_P = 0$. 
 Let $F_{P, \tilde{\nu}}$ be the completion of $F_P$ at $\tilde{\nu}$. 
 Since $\partial_{\tilde{\nu}} : H^3(F_{P, \tilde{\nu}}, \Z/\ell(2)) \to H^2(\kappa(C)_P, \Z/\ell(2))$ 
 is an isomorphism (\cite[Lemma 1.4(3)]{K2}), $\zeta \otimes F_{P, \tilde{\nu}} = 0$.
 
 Let $\nu'$ be a discrete valuation of $F_P$ given by a hight one prime ideal of 
 $\hat{A}$ not equal to $(\pi)$.
 Then, by the assumption on $\zeta$, $\partial_{\nu'}(\zeta \otimes F_P) = 0$
 and hence $\zeta \otimes F_{P, \nu'} = 0$ (\cite[Lemma 1.4(3)]{K2}), 
 where $F_{P, \nu'}$ is the completion of $F_P$ at $\nu'$. 
 Hence, by (\cite[Lemma 6.2]{Saito}), $\zeta \otimes F_P = 0$. 
  \end{proof}
 
\section{A local global principle} 
Let $\XX$, $F$ and $\ell$ be as in \S \ref{kato}. Let $\zeta \in H^3(F, \Z/\ell(1))$. 
Let $\alpha = [a, b)  \in H^2(F, \Z/\ell(1))$.  In this section we show that under some additional assumptions 
on $\XX$, $\zeta$ and $\alpha$, there exists $f \in F^*$ such that $  \partial_x(\zeta - \alpha \cdot (f))$ is unramified 
at all the discrete valuations of $\kappa(x)$ centered at closed points of   $\overline{\{ x \}}$ for all $x \in \XX_{(1)}$ (see \ref{local_global}). 

Let $\zeta \in H^3(F, \Z/\ell(1))$ and $\alpha = [a, b)  \in H^2(F, \Z/\ell(1))$.
For the rest of this section, we assume the following. 
 
\begin{assumptions}
\label{assumptions}
Suppose  ($\XX$, $\zeta $, $\alpha$) satisfy the following conditions. 
\begin{enumerate}
\item[{A1)}] ram$_\XX(\zeta)  =  \{ C_1, \cdots , C_r \}$, $C_i$'s   are regular irreducible curves with normal crossings  \\
\item[{A2)}]   ram$_\XX(\alpha) = \{ D_1, \cdots ,  \cdots , D_n \}$,  $D_j$'s  are  regular curves with normal crossings
and  $D_i \neq C_j$ for all $i, j$   \\
\end{enumerate}
By reindexing, we have ram$_\XX(\alpha) = \{ D_1, \cdots , D_m, \cdots , D_n \}$,  
with char$(\kappa(D_i)) = \ell$  for $1 \leq i \leq m$ and char$(\kappa(D_j)) \neq \ell$ for 
$m+1 \leq j \leq n$. \\

 \begin{enumerate}
 \item[{A3)}] and  $D_i \cap D_j = \emptyset$ for all $ 1 \leq i \leq m$ and
 $m+1 \leq j \leq n$ \\
 \item[{A4)}] if $P \in D_i \cap D_j$ for some $m+1 \leq i < j \leq n$, then char$(\kappa(P)) \neq \ell$ \\
 \item[{A5)}]  there are no chilly loops (see \S \ref{csa}) for $\alpha$ on $\XX$ \\
 \item[{A6)}] $\partial_{C_i}(\zeta)$  is the specialization of $\alpha$ at $C_i$ for all $i$ \\ 
\item[{A7)}] $C_i \cap D_j = \emptyset$ for all $i$  and $1 \leq j \leq m$ \\
\item[{A8)}] if $P \in C_i \cap D_s$ for some $i$ and $s$, then $P \in C_i \cap C_j$ 
for some $i \neq j$ \\
\item[{A9)}]   for every $i \neq j$, there is at most one $D_t$  such that 
$C_i \cap C_j \cap D_t \neq \emptyset$ \\
\item[{A10)}] if $P \in \XX_{(2)}$ with char$(\kappa(P)) = \ell$ and $P \in D_i$ for some $i$,  
then   $\frac{1-a}{(\rho-1)^\ell} \in A_P$. \\
\item[{A11)}] if $P \in C_i \cap C_j \cap D_t$ for some $i < j$ and for some $t$, 
then $D_t$ is given by a regular  prime $u\pi_i^{\ell-1} + v\pi_j$  at $P$, for some 
prime $\pi_i$ (resp.  $\pi_j$) defining $C_i$ (resp.  $C_j$) at $P$ and  units $u, v$ at $P$  \\

\end{enumerate} 
\end{assumptions}

Let $\PP$ be a finite set of closed points of $\XX$ containing $C_i \cap C_j$, $D_i \cap D_j$ for all $i \neq j$,
$C_i \cap D_j$ for all $i, j$ 
and at least one point from each $C_i$ and $D_j$. Let $A$ be the regular semi local ring at $\PP$ on $\XX$.
For every $P \in \PP$, let   $M_P$ be the maximal ideal of $A$ at $P$.  
For  $1 \leq i \leq r$ and $1 \leq j \leq n$, let $\pi_i \in A$ be a prime defining $C_i$ on $A$ and $\delta_j \in A$ 
a prime defining $D_j$ on $A$.

   
 Since char$(F) \neq \ell$, $[a)$ is the extension $F(\sqrt[\ell]{a})$. Since 
$\alpha = [a, b)$, without loss of generality, we assume that $a, b \in A$ and both $a$ and $b$
are $\ell^{\rm th}$ power free.

\begin{lemma} 
\label{square} 
For $1 \leq j \leq n$, let  $n_j = \ell \nu_{D_j}(\ell)  + 1$.
Then there exists a unit $u \in A$ such that 
 $ u\prod \pi_i$ is an $\ell^{\rm th}$ power   modulo $\delta_j^{n_j}$ for all $1 \leq j \leq n$.
 In particular $u\prod \pi_i \in F_{D_j}^\ell$ for all $j$.
\end{lemma}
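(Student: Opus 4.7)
The plan is to define, for each $j$, a unit $u_j\in (A/\delta_j^{n_j})^\times$ with $u_j\prod \pi_i$ an $\ell$-th power in $A/\delta_j^{n_j}$, check the compatibility between the $u_j$'s needed for Lemma~\ref{saltman_exact_seq} applied to the principal ideals $(\delta_j^{n_j})$, then glue to a single $u\in A$ and adjust to ensure $u$ is a unit. The ``in particular'' clause then follows from Hensel's lemma in the complete discrete valuation ring $\hat A_{(\delta_j)}$: since $n_j=\ell\nu_{D_j}(\ell)+1>2\nu_{D_j}(\ell)$, any refinement $u\prod\pi_i\equiv v^\ell\pmod{\delta_j^{n_j}}$ lifts to a genuine $\ell$-th power in $F_{D_j}$.

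The local construction splits at the threshold $m$. For $j\leq m$, assumption A7 prevents any $C_i$ from meeting a closed point of $D_j$, so $\prod\pi_i$ is already a unit in $A/\delta_j^{n_j}$ and I take $u_j:=(\prod\pi_i)^{-1}$, yielding $u_j\prod\pi_i=1^\ell$. For $j>m$ we have $n_j=1$ and $A/\delta_j$ is a semi-local Dedekind domain, hence a PID. A closed point $P\in D_j$ where some $\pi_i$ is a non-unit must, by A8, also lie on some $C_{i'}\neq C_i$, and by A9 the curve $D_j$ is the unique one among the $D_t$ meeting $C_i\cap C_{i'}$ at $P$; A11 then supplies $\delta_j=u_0\pi_i^{\ell-1}+v_0\pi_{i'}$ near $P$, so modulo $\delta_j$ we get $\pi_{i'}\equiv -u_0v_0^{-1}\pi_i^{\ell-1}$, forcing $\nu_P(\pi_i\pi_{i'})=\ell$ while the other $\pi_s$'s stay units. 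Consequently $\mathrm{div}_{D_j}(\prod\pi_i)=\ell\sum_{P\text{ bad}}[P]$, and picking $w_j\in A/\delta_j$ with divisor equal to this sum of bad points (possible in the PID) makes $u_j:=w_j^\ell/\prod\pi_i$ a unit with $u_j\prod\pi_i=w_j^\ell$.

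For the compatibility $u_j\equiv u_k\pmod{(\delta_j^{n_j},\delta_k^{n_k})}$ at each $P\in D_j\cap D_k$, assumption A3 forces $j,k\leq m$ or $j,k>m$, and A8 together with A9 exclude any $C_i$ through such a $P$ (otherwise one would obtain $D_j=D_k$), so $\prod\pi_i$ is a unit there. In the first case both $u_j$ and $u_k$ restrict to $(\prod\pi_i)^{-1}$ in $A_P/(\delta_j^{n_j},\delta_k^{n_k})$ directly. In the second case I use CRT inside the PIDs $A/\delta_j,A/\delta_k$ to renormalize $w_j,w_k$ by unit factors so that $w_j\equiv 1\equiv w_k\pmod{M_P}$ at every intersection point (the bad points are disjoint from $D_j\cap D_k$, so this residue normalization is consistent with the fixed divisors); then both $u_j$ and $u_k$ collapse to $(\prod\pi_i)^{-1}$ in $\kappa(P)=A_P/(\delta_j,\delta_k)$. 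The ideals $I_{jk}=(\delta_j^{n_j},\delta_k^{n_k})$ are pairwise comaximal by the normal-crossings hypothesis A2 (at any closed point at most two $\delta$'s lie in the maximal ideal), so Lemma~\ref{saltman_exact_seq} produces $u\in A$ with $u\equiv u_j\pmod{\delta_j^{n_j}}$ for every $j$. This $u$ is automatically a unit at every closed point lying on some $D_j$, and at any remaining closed point $P\in\PP$ I replace $u$ by $u+z\prod_j\delta_j^{n_j}$, where $z$ is chosen by CRT to make the new residue $\equiv 1\pmod{M_P}$ without disturbing the other congruences. The main delicate point is the renormalization of the $w_j$'s in the $j>m$ case, on which the whole compatibility argument turns; once that is arranged, the gluing via Lemma~\ref{saltman_exact_seq} and the concluding Hensel step are formal.
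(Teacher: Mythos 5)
Your proof is correct and follows essentially the same route as the paper's: A7 disposes of the characteristic-$\ell$ divisors, A8/A9/A11 together with the UFD (indeed PID) structure of $A/(\delta_j)$ handle the remaining ones, Lemma~\ref{saltman_exact_seq} glues, and the Hensel/Epp bound $n_j>2\nu_{D_j}(\ell)$ yields the $\ell$-th power in $F_{D_j}$. The only organizational difference is that you build and patch the per-divisor units $u_j$ directly instead of first normalizing $\pi$ by a single unit $u_1$ and then gluing the $w_s$'s and $w_\delta$; your explicit renormalization of the $w_j$'s at points of $D_j\cap D_k$ is a sound precaution that makes the compatibility hypothesis of Lemma~\ref{saltman_exact_seq} visibly satisfied.
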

 
 \begin{proof}Let $\pi = \prod_1^r \pi_i$ and $\delta = \prod_1^m \delta_j^{n_j}$.
Since, by the assumption A7), $C_i \cap D_j  = \emptyset$ for all $i$ and $1 \leq j \leq m$, 
 the ideals $A\pi$ and $A\delta$ are comaximal  in $A$.
In particular the image of $\pi$ in $A/(\delta)$ is a unit.  By the Chinese remainder theorem, there exists 
   $u_1 \in A$ be 
such that $u_1 = \pi \in A/(\delta)$, $u_1 = 1 \in A/(\pi)$ and
$u_1 = 1 \in A/M_P$ for all $P \in\PP \setminus ((\cup_1^r C_i) \cup (\cup_1^m D_j))$. 
Since the image of $\pi$ in $A/(\delta)$ is a unit,   $u_1$ is a unit in $A$.  Let $\pi' = u_1^{-1}\pi$.  
 
Let $m+1  \leq s \leq n$ and  $a_s $ be the image of $\pi'$  in $ A/(\delta_s)$.   We claim that 
$a_s = w_sb_s^\ell$ for some  $w_s, b_s \in A/(\delta_s)$ with $w_s$  a unit in $A/(\delta_s)$. 
Let $M$ be a maximal ideal of $A/(\delta_s)$. 
Then $M =  M_P/(\delta_s)$ for some $P \in D_s \cap \PP$. 
Suppose $P \not\in C_i$ for all $i$. Then $\pi'  $ is a unit at $P$ and 
hence  $a_s $ is a unit at $M$. Suppose $P \in C_i$ for some $i$.
Then $P \in C_i \cap D_s$. Thus, by the assumption A8), there exists $j \neq i$ such that 
$P \in C_i \cap C_j$.  
Suppose $i < j$. Then, by the assumption A11),    $\delta_s = v_i\pi_i^{\ell-1} + v_j \pi_j$ for some 
units $v_i$ and $v_j$ at $P$. Hence 
$$a_s \equiv  u_1^{-1}( \prod_{t \neq i, j} \pi_t ) \pi_i\pi_j =
u_1^{-1}( \prod_{t \neq i, j} \pi_t ) \pi_i(-\frac{v_i}{v_j} \pi_i^{\ell-1}) = 
u_1^{-1}( \prod_{t \neq i, j} \pi_t )(-\frac{v_i}{v_j}) \pi_i^\ell {~~\textrm{modulo}~} \delta_s.$$
Since $\pi_t$,  $t\neq i, j$, is a unit at $P$ (assumption A1)), $a_s \equiv w_P \pi_j^{\ell}$  modulo $\delta_s$, 
for some
$w_P \in A/(\delta_s)$ a unit at $P$. Suppose $i > j$. Then $\delta_s = v_j\pi_j + v_i \pi_i^{\ell-1}$ for some 
units $v_i$ and $v_j$ at $P$. Hence, as above,  $a_s \equiv w_P\pi_i^{\ell}$ modulo $\delta_s$, for some
$w_P \in A/(\delta_s)$ a unit at $P$. Hence at every maximal ideal of $A/(\delta_s)$, 
$a_s$ is a product of a unit and an $\ell^{\rm th}$ power.
Since $D_s$ is a regular curve on $\XX$, $A/(\delta_s)$ is a semi local regular ring and hence
$A/(\delta_s)$ is an UFD. In particular $a_s = w_sb_s^\ell$ for some $w_s, b_s \in A/(\delta_s)$ 
with $w_s$ a unit. 
Since $\pi' = 1 \in A/(\delta)$, $a_s = \pi' \in A/(\delta_s)$ and $a_s = w_sb_s^\ell$,  we have 
$w_s$ is an $\ell^{\rm th}$ power in $A/(\delta_s,   \delta)$.  Hence 
$w_s = w_s'^{\ell} \in A/(\delta_s, \delta)$ for some unit $w_s' \in A/(\delta_s, \delta)$.
Since $D_j$'s  have normal crossings, the image of the ideal $(\delta_s, 
\delta)$ in $A/(\delta)$ is either the unit ideal or a maximal ideal.
Thus, by the Chinese remainder theorem,  there exists a unit $w_\delta \in A/(\delta)$ 
such that $w_s = w_\delta^\ell \in A/(\delta_s, \delta)$ for all 
$s$.  By (\ref{saltman_exact_seq}), there exists $w'  \in A$   such that 
$w' = w_s \in A/(\delta_s)$,  $w'  = w _{\delta}^\ell  \in A/(\delta)$.

Then, by the Chinese remainder theorem, there exists $w \in A$ such that 
 $w = 1 \in \kappa(P)$ for all $P \in \PP \setminus (\cup_1^n D_i)$  and $w = w' \in A/(\delta \prod_{m+1}^n \delta_s)$.
 Since $w_s \in A/(\delta_s)$, $w_\delta \in A/(\delta)$ are units, 
 $w$ is a unit in $A$. 

 Let    $u = w^{-1}u_1^{-1}$. 
 Since $u_1$ and $w$ are units in $A$, $u \in A$ is a unit. 
 We have  $u\prod \pi_i = w^{-1}\pi' \equiv  w_s^{-1}a_s = b_s^{\ell}$ modulo $ \delta_s$ for  $m+1 \leq s \leq n$
 and  $u\prod \pi_i = w^{-1}\pi' = w_\delta^{-\ell} \in A/(\delta)$. Since $\nu_{D_j}(\ell) = 0$ for 
 $m+1 \leq j \leq n$ (assumption A2)),      $u\prod\pi_i$ is an $\ell^{\rm th}$ power in $A/(\delta_j^{n_j})$ for 
 $1 \leq j \leq n$.
 Since $n_j = \ell \nu_{D_j}(\delta_j) + 1$, $u\prod \pi_i \in F_{D_j}^\ell$ for all $j$  (cf. \cite[\S 0.3]{epp}). 
 \end{proof}

 Let $u \in A$ be a unit  as in (\ref{square}) and $\pi = u\prod_1^r \pi_i \in A$.
Then $div_{\XX}(\pi) = \sum C_i + \sum_1^d t_sE_s$ 
for some irreducible curves $E_s$ with $E_s \cap \PP = \emptyset$.
 In particular $C_i \neq E_s$,  $D_j \neq E_s$ for all $i$, $j$ and $s$. 
Let $\PP'$ be a finite set of points of $\XX$ containing $\PP$,  $C_i \cap E_s$, $D_j \cap E_s$ for all $i$, $j$ and $s$ 
and at least one point from each $E_s$.   
Let $A'$ be the semi local ring at $ \PP'$.  For $1 \leq i \leq n$, 
let $\delta_i' \in A'$ be a prime defining $D_i$ on $A'$. Note that $\delta_i'A' \cap A = \delta_iA$ for all $i$.

 \begin{lemma}
\label{lift_of_residues}  
There exists $v \in A'$  such that \\
$\bullet$ $v$ is a unit and $F(\sqrt[\ell]{v})/F$ is unramified at all the points $P  \in  \PP' \cap (\cup_1^n D_i)$   
except possible at the points $P$  in $ D_i \cap D_j$ for all $i \neq j$ 
 with char$ (\kappa(P)) \neq \ell$ \\
$\bullet$ if char$(\kappa(D_j)) \neq \ell$,  
then the extension  $F(\sqrt[\ell]{v})/F$ is unramified at $D_j$ with the 
 reside field of $F(\sqrt[\ell]{v})$ at $D_j$ is equal to $\partial_{D_j}(\alpha)$ \\
 $\bullet$ if char$(\kappa(D_j)) = \ell$,  
 then  $F_{D_i}(\sqrt[\ell]{v}) \simeq F_{D_i}(\sqrt[\ell]{a})$. In particular 
  $\alpha \otimes F_{D_j}(\sqrt[\ell]{v})$ is trivial
\end{lemma}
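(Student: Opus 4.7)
The plan is to prescribe what $v$ should be modulo each $\delta'_j$ (or a suitable power thereof in residue characteristic~$\ell$) so that the required local behaviour of $F(\sqrt[\ell]{v})/F$ at $D_j$ is forced, and then patch these prescriptions into a single unit $v \in A'$ by means of Saltman's exact sequence, Lemma~\ref{saltman_exact_seq}.

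For $D_j$ with $m+1 \leq j \leq n$ (residue characteristic different from $\ell$), Kummer theory identifies $\partial_{D_j}(\alpha) \in H^1(\kappa(D_j), \Z/\ell)$ with a class in $\kappa(D_j)^*/\kappa(D_j)^{*\ell}$; since $A'/(\delta'_j)$ is a semi-local Dedekind domain with fraction field $\kappa(D_j)$, that class lifts to a unit $u_j \in A'/(\delta'_j)$, and insisting $v \equiv u_j \pmod{\delta'_j}$ (with $v$ a unit) yields an unramified extension at $D_j$ with the desired residue. For $D_j$ with $1 \leq j \leq m$ (residue characteristic $\ell$), set $n_j := \ell\nu_{D_j}(\ell)+1$ as in Lemma~\ref{square}, and impose $v \equiv a \pmod{(\delta'_j)^{n_j}}$; the congruence criterion exploited there forces $v/a \in F_{D_j}^\ell$, so $F_{D_j}(\sqrt[\ell]{v}) \simeq F_{D_j}(\sqrt[\ell]{a})$ and $\alpha = [a,b)$ becomes trivial over this extension. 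Assumption~A10) supplies $a = 1 - w_P(\rho-1)^\ell$ with $w_P \in A_P$ at every closed point $P \in D_j \cap \PP'$, so Corollary~\ref{epp_general} shows that $F(\sqrt[\ell]{a})/F$ is unramified at the two-dimensional local ring $A_P$, and the congruence $v/a \in F_{D_j}^\ell$ transfers this to $F(\sqrt[\ell]{v})$.

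To glue the prescriptions, apply Lemma~\ref{saltman_exact_seq}. The type~1 and type~2 ideals are pairwise comaximal by~A3); within type~1 the prescription $v \equiv a$ is automatically consistent at every intersection point; the delicate case is a point $P \in D_i \cap D_j$ with $m+1 \leq i < j \leq n$, where consistency demands $u_i(P) \equiv u_j(P)$ in $\kappa(P)^*/\kappa(P)^{*\ell}$. We exploit the freedom to replace any $u_j$ by $u_j s_j^\ell$ without altering its Kummer class, and by~A5) (no chilly loops on $\XX$, arranged via Proposition~\ref{chillyloops}) the graph of chilly intersections among the type~2 divisors is a forest, so the agreements propagate inductively along a spanning tree; at the remaining type~2 intersection points the statement of the lemma explicitly permits $F(\sqrt[\ell]{v})$ to ramify, and we impose nothing there. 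A concluding multiplication by a unit of $A'$ congruent to~$1$ modulo all the prescribed ideals makes the patched element itself a unit. The main obstacle is precisely the type~2 compatibility: without the acyclicity provided by~A5), a cycle of incompatible local data would produce an obstruction in some $\kappa(P)^*/\kappa(P)^{*\ell}$ that no $\ell$-th-power adjustment could remove. Once the patching is complete, the three bullet points of the lemma follow routinely from the local prescriptions combined with Corollary~\ref{epp_general} and Kummer theory.
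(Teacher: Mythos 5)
Your overall strategy — prescribe local congruences along each $D_j$ and glue them with Lemma~\ref{saltman_exact_seq} — is indeed what the paper does, but you are missing the device that makes the characteristic-$\ell$ points work, and as written your prescriptions do not deliver the first bullet of the lemma.

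The crucial point is that the paper does not impose congruences on $v$ directly. It constructs residue classes $u_i\in A'/(\delta'_i)$ (with $u_i=w_i$ if char$\,\kappa(D_i)=\ell$ and $u_i=(1-w_i)/(\rho-1)^\ell$ if char$\,\kappa(D_i)\neq \ell$), glues them to some $v'\in A'$ using Lemma~\ref{saltman_exact_seq} and the Chinese remainder theorem, and then \emph{defines} $v:=1-(\rho-1)^\ell v'$. This global shape is what forces $F(\sqrt[\ell]{v})/F$ to be unramified at the two-dimensional local ring $A_P$ for \emph{every} $P\in\PP'$ with char$\,\kappa(P)=\ell$, by Corollary~\ref{epp_general}, including the delicate points $P\in D_i$ with char$\,\kappa(P)=\ell$ but char$\,\kappa(D_i)\neq\ell$ and $P$ on no other $D_j$. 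Your scheme only imposes $v\equiv u_j\ (\mathrm{mod}\ \delta'_j)$ along such a curve; that controls the residue of $F(\sqrt[\ell]{v})$ at the discrete valuation $\nu_{D_j}$ (second bullet), but gives you no control at the other height-one primes of $A_P$, so unramifiedness of $F(\sqrt[\ell]{v})/F$ at $A_P$ is simply not established. Similarly, for $1\le j\le m$ your congruence $v\equiv a\ (\mathrm{mod}\ (\delta'_j)^{n_j})$ only forces $v/a\in F_{D_j}^{*\ell}$, i.e.\ an isomorphism over the completion at the generic point of $D_j$; it does not make $v/a$ an $\ell$-th power in $A_P^{*}$, and so unramifiedness of $F(\sqrt[\ell]{v})$ at the two-dimensional local ring $A_P$ does not "transfer" from $F(\sqrt[\ell]{a})$ as you claim.

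Relatedly, you need Lemma~\ref{localift}: on a horizontal $D_i$ (char$\,\kappa(D_i)\neq\ell$) carrying characteristic-$\ell$ closed points, the lift $w_i$ of $\partial_{D_i}(\alpha)$ must be adjusted so that $(1-w_i)/(\rho-1)^\ell$ is a unit at those points, otherwise $u_i$ is not even defined there and the patching data are incompatible with the prescribed form $v=1-(\rho-1)^\ell v'$. Your proposal never performs this adjustment. Finally, a smaller remark: the agreement of the lifted residues at intersection points with char$\,\kappa(P)\neq\ell$ is not something you need to re-derive via a forest/spanning-tree argument in this lemma — the paper simply invokes Saltman's \cite[Proposition~7.10]{S3}, which already produces compatible lifted residues on the model (that is exactly where the absence of chilly loops, arranged earlier via Proposition~\ref{chillyloops}, is consumed). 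Your informal sketch is in the spirit of Saltman's proof, but repeating it here is extra work, and it does not compensate for the missing parameterization $v=1-(\rho-1)^\ell v'$ and the missing use of Lemma~\ref{localift}.
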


\begin{proof}   
Let $1 \leq i \leq n$.
Suppose  char$(\kappa(P)) = \ell$ for all $P \in D_i$. 
If char$(\kappa(D_i)) \neq \ell$,  let $w_i \in \kappa(D_i)$ be such that $\partial_{D_i}(\alpha) = [w_i)$. 
Suppose cjar$(\kappa(D_i)) = \ell$.
By the assumption A10),   $\frac{a-1}{(\rho-1)^\ell}\in A_P$ for all $P \in D_i$.
In particular $\frac{a-1}{(\rho-1)^\ell}$ is regular at $D_i$ and 
the image of  $\frac{a-1}{(\rho-1)^\ell}$  in $\kappa(D_i)$ is in $A'/(\delta'_i)$.  
Let $w_i$ be the image of $\frac{a-1}{(\rho-1)^\ell} $ in $A'/(\delta'_i)$. 

Suppose there exists $P \in D_i$ with char$(\kappa(P)) \neq \ell$.
Then char$(\kappa(D_i)) \neq \ell$. 
 By (\cite[Proposition 7.10]{S3}), there exists $w_i \in \kappa(D_i)^*$
 such that \\
 $\bullet$ $\partial_{D_j}(\alpha) = \kappa(D_i)(\sqrt[\ell]{w_i})$,\\
 $\bullet$   $w_i$ is defined at all $P \in \PP \cap D_i$ with char$(\kappa(P)) \neq \ell$, \\
 $\bullet$  $w_i$ is a unit at all $P \in (\PP \cap D_i) \setminus (\cup_{j \neq i} D_j)$ with char$(\kappa(P)) \neq \ell$ \\
 $\bullet$ $w_i(P) =  w_j(P)$ for all $P \in D_i \cap D_j$, $i \neq j$ with char$(\kappa(P)) \neq \ell$. \\
 Let  $P \in \PP \cap D_i$. Suppose   char$(\kappa(P))  = \ell$.  Then, by the assumptions A10),  
 $[a)$ is unramified at $P$ (cf. \ref{epp}). Since $\alpha = [a, b)$, $\partial_{D_i}(\alpha) = [a(D_i))^{\nu_{D_i}(b)}$.
 In particular $\partial_{D_i}(\alpha) = \kappa(D_i)(\sqrt[\ell]{w_i})$ is unramified at $P$.
 Thus, by (\ref{localift}), we assume that $\frac{1-w_i}{(\rho-1)^\ell}$ 
 is unit at all $P \in \PP\cap D_i \setminus (\cap_{j \neq i}D_j)$ with char$(\kappa(P)) = \ell$. 
 Since char$(\kappa(D_i)) \neq \ell$, by assumptions A3) and A4), if $P \in D_i \cap D_j$ for some
 $j \neq i $, then char$(\kappa(P)) \neq \ell$. 
Thus  $\frac{1-w_i}{(\rho-1)^\ell} \in A'/(\delta_i)$.

Let $P \in D_i \cap D_j$ for some $i \neq j$. 
Suppose char$(\kappa(P)) = \ell$. Then, by the assumption A3) and A4), 
char$(\kappa(D_i)) = $ char$(\kappa(D_j)) = \ell$ and 
by the choice of $w_i$, we have  $w_i(P)   =  w_j(P)  \in  \kappa(P)$.
Suppose char$(\kappa(P)) \neq \ell$. Then, by the choice of $w_i$, we have $w_i = w_j  \in \kappa(P)$.
 
Let $u_i = w_i \in A'/(\delta'_i)$ if char$(\kappa(D_i)) = \ell$ and $u_i =   (1 - w_i)/(\rho-1)^\ell \in A'/(\delta'_i)$
if char$(\kappa(D_i)) \neq \ell$.  By   the assumptions A3) and A4), $u_i$ is defined at all 
$P \in \PP \cap D_i$ for all $i$
and  $u_i(P) = u_j(P) \in \kappa(P)$ for all $P \in D_i \cap D_j$ for $i \neq j$.
 Thus, by (\ref{saltman_exact_seq}), there exists $u' \in A'$ such that $u' = u_i $ modulo $(\delta'_i)$ for all $i$.
By the Chinese remainder theorem, we get $v' \in A'$ such that $v' = u' \in A'/(\prod \delta'_i)$ and
$v' = 0 \in \kappa(P)$ for all  $P \in \PP'$  with $P \not\in D_i $ for all $i$. 

We now show that $v = 1 - (\rho-1)^\ell v'$ has all the required properties. 

Let $P \in \PP'$. 
Suppose char$(\kappa(P)) = \ell$. Then $\rho -1 \in M_P$. Since $v' \in A'$, 
$v$ is a unit at $P$ and  $F(\sqrt[\ell]{v})$ is unramified at $P$ (\ref{epp_general}).
Suppose char$(\kappa(P)) \neq \ell$. 
Suppose that $P \not\in D_i$  for all $i$.
Then, by the choice of $v'$, $v' \in M_P$ and hence $v$ is a unit at $P$ and  $F(\sqrt[\ell]{v})/F$ is 
unramified at $P$. 
Suppose that $P \in D_i$ for some $i$.  Since char$(\kappa(P)) \neq \ell$,  char$(\kappa(D_i)) \neq \ell$.
Thus, by the choice of $v'$, we have $v' = u' = u_i = (1 - w_i)/(\rho-1)^\ell \in A'/(\delta'_i)$.
Hence $v = w_i \in A'/(\delta'_i)$. Suppose $P \not\in D_j $ for all $j \neq i$.
Then, by the choice $w_i$ is a unit at $P$ and hence $v$ is a unit at $P$.  
In particular $F(\sqrt[\ell]{v})/F$ is unramified at $P$.   
Thus  $v$ is a unit  and $F(\sqrt[\ell]{v})/F$ is unramified at all $ P\in \PP'$ except possibly at
 $P \in  D_i \cap D_j$ with char$(\kappa(P)) \neq \ell$. 

Suppose char$(\kappa(D_i)) \neq \ell$. 
Then, by the choice of $v$, we have 
$v = 1 - (\rho-1)^\ell v' = 1 - (\rho-1)^\ell u_i = w_i \in A'/(\delta'_i) \subset \kappa(D_i)$.  Since 
$w_i \neq 0$,  $v$ is a unit at $\delta_i$ and  
$F(\sqrt[\ell]{v})$ is unramified at $D_i$ with residue  field $\kappa(D_i)(\sqrt[\ell]{w_i}) = \partial_{D_i}(\alpha)$.

Suppose that char$(\kappa(D_i)) = \ell$. Since $v = 1 - (\rho-1)^\ell v'$ and $v' = u_i = w_i \in A'/(\delta'_i)$,  
$F(\sqrt[\ell]{v})$ is unramified at $D_i$ with residue field equal to  
$\kappa(D_i)[X]/(X^\ell - X + w_i) $ (\ref{epp}).
Since $ w_i$ is the image of $\frac{a-1}{(\rho-1)^\ell} $  in $A'/(\delta'_i)$,  the residue field of $F(\sqrt[\ell]{a})$ at $\delta'_i$ is 
$\kappa(D_i)[X]/(X^\ell - X + w_i) $ (\ref{epp}).  Hence  $F_{D_i}(\sqrt[\ell]{v}) \simeq   F_{D_i}(\sqrt[\ell]{a})$.
Since $\alpha = [ a, b )$, $\alpha \otimes F_{\delta'_i}(\sqrt{v}) $ is trivial. 
\end{proof}

\begin{remark} If $\ell$ is a unit in $A'$, then the extension $F(\sqrt[\ell]{v})/F$ given in the  above  lemma is 
the lift of the residues of $\alpha$ which is proved in (\cite[Proposition 7.10]{S3}).
\end{remark}

Choose $u \in A$ as in (\ref{square}) and $\pi = u\prod \pi_i \in A$
 and  $div_{\XX}(\pi) = \sum C_i + \sum_1^d t_sE_s$ 
for some irreducible curves $E_s$ with $E_s \cap \PP = \emptyset$
(see the paragraph before \ref{lift_of_residues}).  
Let $v \in A'$  as in (\ref{lift_of_residues}).
Let $V_1, \cdots, V_q$ be the irreducible curves in $\XX$ where $F(\sqrt[\ell]{v\pi})$ is ramified.
Since $\pi \in F_{D_j}^\ell$  (\ref{square}) and $F(\sqrt{v})$ is unramified at $D_j$ (\ref{lift_of_residues}) for all $j$,
$V_i \neq D_j$ for all $i$ and $j$.
Let $ \PP'' = \PP \cup (\cup (D_i \cap E_s))  \cup  (\cup (D_i \cap V_j)  $. 
 After reindexing $E_s$, 
we assume that there exists $d_1 \leq d$ such that  $E_s \cap\PP'' \neq \emptyset$ for $1 \leq s \leq d_1$
 and $E_s \cap\PP''  = \emptyset$ for $d_1 + 1 \leq s \leq d$. 
 
 \begin{lemma} 
 \label{norm}
Let $A''$ be the regular semi local ring at $ \PP'' $.
Then there exists $h \in F^*$ which is a norm from the extension $F(\sqrt[\ell]{v\pi})$
such that div$_\XX(h) = - \sum_1^{d_1}t_iE_i   + \sum r_iE_i' $, where $E_j' \cap \PP'' = \emptyset$ for all $j$.
\end{lemma}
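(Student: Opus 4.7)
The plan is to construct $h$ explicitly as a norm of the form $N_{L/F}(1+c\theta^{\ell-1})$, where $L=F(\theta)$ with $\theta^\ell=v\pi$ and $c\in F^*$ is chosen by weak approximation to realise the prescribed valuations at the curves meeting $\PP''$. If $v\pi\in F^{*\ell}$ then $L=F$, every element of $F^*$ is trivially a norm, and weak approximation on the regular semi-local ring $A''$ immediately supplies an $h$ with the required divisor; so in what follows assume $v\pi\notin F^{*\ell}$ and fix a generator $\sigma$ of $\mathrm{Gal}(L/F)$ with $\sigma(\theta)=\rho\theta$. Since $\gcd(\ell-1,\ell)=1$, the set $\{\rho^{j(\ell-1)}\}_{j=0}^{\ell-1}$ is the full set of $\ell$-th roots of unity, and the identity $\prod_{j=0}^{\ell-1}(1+\rho^j Z)=1-(-Z)^\ell$ yields the explicit norm formula
$$
N_{L/F}\bigl(1+c\theta^{\ell-1}\bigr)=1-(-c\theta^{\ell-1})^\ell=1+(-1)^{\ell+1}c^\ell(v\pi)^{\ell-1}.
$$

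The crucial observation is that the dominating term $c^\ell(v\pi)^{\ell-1}$ has $E_i$-valuation $\ell\,\nu_{E_i}(c)+(\ell-1)t_i$, which can be made equal to $-t_i$ for \emph{any} positive integer $t_i$ by setting $\nu_{E_i}(c)=-t_i$; the more obvious $N(1+c\theta)=1+(-1)^{\ell+1}c^\ell v\pi$ affords only valuations of the form $\ell\,\nu(c)+t_i$, and hence cannot realise $-t_i$ unless $\ell\mid t_i$. By weak approximation on $F^*$ at the finite collection of discrete valuations corresponding to irreducible curves of $\XX$ meeting $\PP''$, one chooses $c\in F^*$ satisfying (i) $\nu_{E_i}(c)=-t_i$ for each $1\le i\le d_1$, and (ii) $\ell\,\nu_Y(c)+(\ell-1)\nu_Y(v\pi)\ge 1$ for every other irreducible curve $Y$ of $\XX$ meeting $\PP''$. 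Since $v\in(A')^*$ is a unit at every point of $\PP'$, any curve through $\PP'$ (in particular all $C_i$, $D_j$, and $E_s$ with $s\le d_1$) has $\nu_Y(v)=0$, and (ii) is satisfied once $\nu_Y(c)$ is nonnegative (or $\ge 1$ when $\nu_Y(v\pi)=0$); the only potentially troublesome inputs are the ramification curves $V_k$ of $L/F$ meeting $\PP''\setminus\PP'\subseteq\bigcup_{i,j}(D_i\cap V_j)$, and here one simply takes $\nu_{V_k}(c)$ large enough to dominate $(\ell-1)\nu_{V_k}(v\pi)$.

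Setting $h:=N_{L/F}(1+c\theta^{\ell-1})=1+(-1)^{\ell+1}c^\ell(v\pi)^{\ell-1}$, by construction $h$ is a norm from $L/F$. At $E_i$ for $i\le d_1$ one has $\nu_{E_i}(c^\ell(v\pi)^{\ell-1})=-\ell t_i+(\ell-1)t_i=-t_i<0=\nu_{E_i}(1)$, so $\nu_{E_i}(h)=-t_i$; at any other irreducible curve $Y$ of $\XX$ meeting $\PP''$, condition (ii) forces $\nu_Y(c^\ell(v\pi)^{\ell-1})\ge 1>0$, hence $\nu_Y(h)=\nu_Y(1)=0$. The remaining divisor of $h$ is thus supported on curves disjoint from $\PP''$, yielding $\mathrm{div}_\XX(h)=-\sum_{i=1}^{d_1}t_iE_i+\sum_j r_jE_j'$ with $E_j'\cap\PP''=\emptyset$, as required. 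The main execution difficulty is purely bookkeeping: one must enumerate all curves meeting $\PP''$ and verify the independence of the weak approximation conditions on $c$, but since these are finitely many independent discrete valuations, the required $c$ always exists.
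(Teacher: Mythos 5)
Your norm identity $N_{L/F}\!\bigl(1+c\theta^{\ell-1}\bigr)=1+(-1)^{\ell+1}c^\ell(v\pi)^{\ell-1}$ is correct and the observation that shifting to the $\theta^{\ell-1}$ coefficient is what makes the valuation $-t_i$ achievable is a genuinely nice point. The approach is different in spirit from the paper's, which never writes $h$ explicitly; instead it constructs a divisor $\tilde E$ on the integral closure $T$ of $A''$ in $L$, checks locally at every maximal ideal of $T$ that $\tilde E$ equals $-\mathrm{div}(\sqrt[\ell]{v\pi})$, concludes $\tilde E=\mathrm{div}_T(g)$ since $T$ is semi-local and normal, and takes $h=N_{L/F}(g)$.

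However, your weak approximation step has a real gap. You invoke ``weak approximation on $F^*$ at the finite collection of discrete valuations corresponding to irreducible curves of $\XX$ meeting $\PP''$,'' but that collection is \emph{infinite}: through any closed point of the two-dimensional scheme $\XX$ there pass infinitely many irreducible curves. You can only impose conditions on $\nu_Y(c)$ at finitely many curves $Y$, and once $c$ is fixed, the element $h=1+(-1)^{\ell+1}c^\ell(v\pi)^{\ell-1}$ can acquire zeros along uncontrolled curves through points of $\PP''$. Concretely, at a point $P\in\PP''$ where $c^\ell(v\pi)^{\ell-1}$ happens to lie in $A_P^*$ and $1+(-1)^{\ell+1}c^\ell(v\pi)^{\ell-1}\in M_P$, the element $h$ vanishes along some curve through $P$; such a curve would then appear in $\mathrm{div}_\XX(h)$ with $E_j'\cap\PP''\neq\emptyset$, contradicting the conclusion. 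To close the gap you would need to impose, at each of the finitely many $P\in\PP''$, a condition on the \emph{residue class} of $c^\ell(v\pi)^{\ell-1}$ (not merely its valuations at finitely many curves): namely that $h\cdot\prod_{s\colon P\in E_s}\pi_{E_s}^{t_s}$ is a unit of $A_P$. That is a Chinese-remainder--type condition over the finite set $\PP''$ rather than weak approximation at curve valuations, and you would also have to verify that $v$ is regular at points of $\PP''\setminus\PP'$ (which lie in $\cup D_i\cap V_j$), where your argument implicitly uses $v\in(A')^*$ but $A'$ is only the semi-local ring at $\PP'$. The paper's divisor-on-$T$ argument avoids all of this by reducing to the statement that a certain Weil divisor on a semi-local normal ring is principal, which is checked point by point on $\PP''$.
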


\begin{proof}
Let $L = F(\sqrt[\ell]{ v\pi})$  and $T$ be  the integral closure of $A''$ in $L$.  

Let $1 \leq s \leq d_1$ and $P \in  \PP'' \cap E_s$. Since  $E_s \cap \PP  = \emptyset$, 
$P \in D_i \cap E_s$ for some $i$. 
Since $v$ is a unit at all $P \in (\PP' \setminus \PP)$ (\ref{lift_of_residues}) and $D_i \cap E_s \subset \PP'$, 
$v$ is a unit at $P$ and hence $v$ is a unit at $E_s$.   

Suppose that $t_s$ is coprime to $\ell$. 
Since $div_{\XX}(\pi) = \sum C_i + \sum_1^d t_sE_s$  and $v$ is a unit at $E_s$, 
 there is a unique  curve $\tilde{E}_s$  in $T$ lying over $E_s$. Let $t'_s = t_s$. 
  
Suppose that $t_s = \ell t'_s$. Let $\tilde{E}_s =  t'_s \sum E_{s,i}$, where $E_{s,i}$ are 
the irreducible divisors in $T$ which lie over $E_s$.  
Let $\tilde{E} = -\sum t'_s \tilde{E}_s$. 

 We claim that $ \tilde{E}$  is a principal divisor on $T$. 
Since $T$ is normal it is enough to check this at every maximal ideal of $T$. 
Let $M$ be a maximal ideal of $T$. Then  $ M \cap A'' = M_P$ for 
some $P \in   \PP''$. Suppose $P \not\in E_s$ for all $1 \leq s \leq d_1$. 
Then   $\tilde{E}$ is  trivial at $M$.  
Suppose that $P \in E_s$ for some $s$ with $1 \leq s \leq d_1$.  
Then, as we have seen above,  $P \in  D_i \cap E_s$ for some $i$.
Since $D_i \cap C_j \in \PP$ for all $i$ and $j$ and $\PP \cap E_s = \emptyset$, 
$P   \not\in C_i$ for all $i$. 
Hence div$_{A_P}(\pi) = \sum _{P \in E_i} t_iE_i$.
Since $v$ is a unit at $P$ (\ref{lift_of_residues}), div$_{A_P}(v\pi) = $ div$_{A_P}(\pi)$
and hence $\tilde{E}  =$  $-$div$(\sqrt[\ell]{v\pi})$ at $M$.
In particular $\tilde{E}$ is principal at $M$. 
Hence $\tilde{E} = div_T(g)$ for some $g \in L$. Let $h = N_{L/F}(g)$. Then div$_{A''}(h) = 
-\sum_1^{d_1} t_iE_i$ and hence $h$ has the required properties. 
\end{proof}

 \begin{lemma}
\label{norm1} Let $h \in F^*$ be as in (\ref{norm}) with 
div$_\XX(h) = - \sum_1^{d_1}t_iE_i + \sum r_jE_j'$. Then $\alpha$ is unramified at $E_j'$.
Further, if $r_j$ is coprime to $\ell$ for some $j$,  then the  specialization of 
$\alpha$ at $E_j'$ is unramified at every discrete valuation of $\kappa(E_j')$ which is centered on $E_j'$. 
\end{lemma}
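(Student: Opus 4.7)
The first assertion is immediate from the construction: $\PP$ contains a point of each $D_j$, so $D_i \cap \PP'' \neq \emptyset$ for every $i$, whereas $E_j' \cap \PP'' = \emptyset$ by the defining property of the $E_j'$. Thus $E_j' \neq D_i$ for every $i$, and since the ramification locus of $\alpha$ on $\XX$ is contained in $\{D_1,\dots,D_n\}$, the class $\alpha$ is unramified at $E_j'$.

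For the second assertion, write $\overline{\alpha}_{E_j'} \in H^2(\kappa(E_j'),\Z/\ell(1))$ for the specialization of $\alpha$ at $E_j'$ and set $\zeta' := \alpha \cdot (h) \in H^3(F,\Z/\ell(2))$. Since $\alpha$ is unramified at $E_j'$, the residue formula (\ref{residue}) gives
$$\partial_{E_j'}(\zeta') = \overline{\alpha}_{E_j'}^{\nu_{E_j'}(h)} = \overline{\alpha}_{E_j'}^{r_j}.$$
Because $\gcd(r_j,\ell) = 1$, the classes $\overline{\alpha}_{E_j'}$ and $\partial_{E_j'}(\zeta')$ generate the same cyclic subgroup of $H^2(\kappa(E_j'),\Z/\ell(1))$, so it suffices to show, for each closed point $P$ of $E_j'$, that $\partial_P(\partial_{E_j'}(\zeta')) = 0$. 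By Kato's complex (\ref{complex}) applied to $\zeta'$ at $P$,
$$\sum_{x \in \XX_{(1)},\ P \in \overline{\{x\}}} \partial_P\bigl(\partial_x(\zeta')\bigr) = 0,$$
so the task reduces to showing $\partial_P(\partial_C(\zeta')) = 0$ for every irreducible curve $C \neq E_j'$ passing through $P$. For any $C$ at which $\alpha$ is unramified and $\nu_C(h) = 0$ this is immediate from (\ref{residue}); in particular every $C_i$ through $P$ contributes nothing, since the $C_i$ do not appear in the divisor of $h$, and the same holds for every generic curve through $P$.

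The curves left to consider are $C = D_i$, $C = E_s$ with $s \leq d_1$, or $C = E_k'$ with $k \neq j$, passing through $P$. The condition $E_j' \cap \PP'' = \emptyset$ already rules out several configurations: $P$ can lie on neither $D_i \cap E_s$ nor $D_i \cap V_k$, and in view of the normal-crossings hypotheses of \ref{assumptions} the collection of special curves through $P$ is strongly constrained. When $C = E_s$ or $C = E_k'$, the class $\alpha$ is unramified at $C$, so $\partial_C(\zeta') = \overline{\alpha}_C^{\nu_C(h)}$, and the vanishing of $\partial_P$ of this class I would extract from the projection-formula identity $(v\pi) \cdot (h) = 0$ coming from $h = N_{L/F}(g)$ with $L = F(\sqrt[\ell]{v\pi})$, combined with $\pi \in F_{D_j}^{*\ell}$ from (\ref{square}). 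The main obstacle is the case $C = D_i$: here $\alpha$ itself ramifies and $\partial_{D_i}(\zeta')$ is the cup product of $\partial_{D_i}(\alpha) \in H^1(\kappa(D_i),\Z/\ell)$ with the specialization $h(D_i) \in \kappa(D_i)^*$. One must invoke the compatibility furnished by (\ref{lift_of_residues}) --- that $F(\sqrt[\ell]{v})$ realises $\partial_{D_i}(\alpha)$ --- together with the explicit form of $h$ as a norm from $F(\sqrt[\ell]{v\pi})$ to conclude that $\partial_P$ of this cup product vanishes. Assembling all these vanishings via Kato's complex then delivers the required unramifiedness of $\overline{\alpha}_{E_j'}$ at $P$.
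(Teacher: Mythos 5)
The first assertion is handled correctly and by the same observation as the paper: $E_j'\cap\PP''=\emptyset$ forces $E_j'\neq D_i$, whence $\alpha$ is unramified at $E_j'$ by A2).

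For the second assertion your strategy diverges completely from the paper's, and as written it has a genuine gap at precisely the step you flag as ``the main obstacle.'' You set $\zeta' = \alpha\cdot(h)$ and invoke the Kato complex (\ref{complex}) at a closed point $P$ of $E_j'$, which only gives the \emph{sum} $\sum_{C\ni P}\partial_P(\partial_C(\zeta'))=0$. To extract $\partial_P(\partial_{E_j'}(\zeta'))=0$ you must therefore prove that every other summand vanishes, and the crucial case $C=D_i$ (which does occur: the only constraint is $P\notin\PP''$, which does not exclude $P\in D_i$) is not proved --- you write only that ``one must invoke the compatibility furnished by (\ref{lift_of_residues})\ldots to conclude,'' without carrying out the computation. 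That computation is not routine: $\partial_P\bigl(\partial_{D_i}(\alpha)\cdot(\overline{h})\bigr)$ involves the behaviour of $\overline{h}\in\kappa(D_i)^*$ at $P$, and the properties of $v$ established in (\ref{lift_of_residues}) are local to the points of $\PP'$, whereas here $P\notin\PP'$. Even on the curve $E_j'$ itself there is a further subtlety you don't address: $E_j'$ need not be regular, so the asserted conclusion --- unramifiedness at \emph{every} discrete valuation of $\kappa(E_j')$ centered on $E_j'$ --- requires more than vanishing of a single residue $\partial_P(\partial_{E_j'}(\zeta'))$.

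The paper avoids all residue computations here. It shows that $\alpha\otimes_F L$ (with $L=F(\sqrt[\ell]{v\pi})$) extends to an Azumaya algebra over the integral closure $B_P$ of $A_P$ in $L$: first $L/F$ is unramified at $P$ because $P\notin V_{i'}$ for any $i'$ (as $\cup(V_{i'}\cap D_i)\subset\PP''$ and $P\notin\PP''$), so $B_P$ is regular semi-local; then $\alpha\otimes L$ is unramified at every height-one prime of $B_P$ --- for the prime over $D_i$ this uses $\pi\in F_{D_i}^\ell$ from (\ref{square}) and the matching of residues (or triviality) from (\ref{lift_of_residues}), with a case split on $\mathrm{char}(\kappa(D_i))$ --- and one concludes with the two-dimensional purity lemma (\ref{2dim_unramified}). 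The factorization $\theta=\theta_1\cdots\theta_\ell$ in $B_P$ (which uses $r_j$ coprime to $\ell$ and $h$ a norm from $L$, hence $v\pi\in F_{E_j'}^\ell$) then exhibits the specialization of $\alpha$ at $E_j'$ as coming from an Azumaya algebra over $B_P/(\theta_s)$, which is integral over $A_P/(\theta)$; this gives unramifiedness at all discrete valuations centered at $P$, with no regularity assumption on $E_j'$. If you want to keep your residue-theoretic route you would need to supply, in detail, the vanishing at each $D_i$ through $P$ (including the $\mathrm{char}\,\ell$ case) and then still handle the passage from a single $\partial_P$ to all valuations over $P$; the paper's Azumaya-algebra argument gets both for free.
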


\begin{proof} Since $E_j' \cap \PP'' = \emptyset$ and $D_i \cap \PP''  \neq \emptyset$ for all $i$, $E_j' \neq D_i$ for all $i$.
 Hence,  by the assumption A2), $\alpha$ is unramified at $E_j'$. 

Let $P$ be a closed point of $ E_j'$ for some $j'$ with $r_j$ coprime to $\ell$.
Let $L = F(\sqrt[\ell]{v\pi})/F$ and $B_P$ be  the integral closure   of $A_P$ in $L$.
We first  show that $\alpha \otimes_F L$ is unramified on $B_P$.

Suppose $P \not\in D_i$ for all $i$.
Then $\alpha$ is unramified at $P$ (assumption A2)).  
Hence there exists an Azumaya algebra $\AA_P$ over $A_P$ such that  the class of 
$\AA_P \otimes_{A_P} F$ is  $\alpha$ (cf. \ref{2dim_unramified}). 
In particular $\alpha \otimes_F L$ is the class of $\AA_P \otimes_{A_P} B_P$ and hence 
$\alpha \otimes_F L$ is unramified on $B_P$.

Suppose $P \in D_i$ for some $i$.  
Since $E_j' \cap \PP'' =  \emptyset$ (\ref{norm}), $P \not\in \PP''$.
 Since $\cup (V_{i'} \cap D_i) \subset \PP''$,   $P \not\in \cup V_{i'}$ for all $i'$
and hence $L$ is unramified at $P$.   
Hence $B_P$   is a regular semi local domain. 
Let $Q \subset B_P$ be a hight one prime ideal and $Q_0 = Q \cap A_P$. 
Then $Q$ is a height one prime ideal of $A_P$. 
If  $\alpha$ is unramified at  $Q_0$, then $\alpha \otimes_F L $ is unramified at $Q$.
Suppose that $\alpha$ is ramified at $Q_0$. Since $P \not \in D_j$ for $j \neq i$, 
 $Q_0 $   is the prime ideal corresponding to $D_i$. 
 Since  $\pi \in F_{D_i}^\ell$ (\ref{square}),
$F_{D_i}(\sqrt[\ell]{v\pi}) = F_{D_i}(\sqrt[\ell]{v})$. 
Suppose that char$(\kappa(D_i)) \neq \ell$.
Since $L/F$ is unramified at $D_i$ with residue field equal to $\partial_{D_i}(\alpha)$ (\ref{lift_of_residues}), 
$\alpha \otimes L/F$  is unramified at $Q$  (cf. \cite[Lemma 4.1]{PPS}).
Suppose that char$(\kappa(D_i)) = \ell$.
Since $\alpha \otimes F_{D_i}(\sqrt[\ell]{v})$ is trivial (\ref{lift_of_residues}),  $\alpha \otimes_F L$ is unramified at $Q$.  
Since $B_P$ is a regular semi local ring of dissension two,  $\alpha \otimes F(\sqrt[\ell]{v\pi})$ is
 unramified at $B_P$ (cf. \cite[Lemma 3.1]{LPS}).
 Let $\AA_P$ be an Azumaya algebra over $B_P$ such that $\alpha \otimes_F L$ is the class of
$\AA_P \otimes_{B_P} L$. 

Since $E_j$ is in the support of $h$, $r_j$ is coprime to $\ell$ and $h$ is a norm 
from $F(\sqrt[\ell]{v\pi})$,  $v \pi \in F_{E_j}^{\ell}$.  Let $\theta \in A_P$ be a prime defining $E_j$ at $P$. 
Then $\theta = \theta_1 \cdots \theta_\ell$ for distinct primes $\theta_i \in B_P$
and $A_P/(\theta) \subset B_P/(\theta_s) \subset \kappa(E_j)$ for all $s$.
Let $\beta \in H^2(\kappa(E_j'),\Z/\ell(1))$ be the specialization of $\alpha$ at $E_j'$.
Then $\beta$ is the class of $\AA_P \otimes_{B_P/(\theta_i)} \kappa(E_j)$.
Hence $\beta$ is unramified at $B_P/(\theta_s)$.
Since $B_P/(\theta_s)$ is integral over $A/(\theta)$, 
  $\beta$ is unramified at all discrete valuations of $\kappa(E_j)$ which 
are centered at $A_P/(\theta)$.
\end{proof}

\begin{theorem}
\label{local_global} Suppose $(\XX, \zeta, \alpha)$ satisfies the assumption in \ref{assumptions}. 
 Then there exists $f \in K^*$ such that for every   $x \in \XX_{(1)}$, 
$ \partial_x(\zeta - \alpha \cdot (f))$ is unramified at every discrete valuation of $\kappa(x)$ centered 
on the closure of $\{ x \}$. 
\end{theorem}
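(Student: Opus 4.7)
The plan is to set $f = \pi h$, where $\pi = u\prod_{i=1}^r \pi_i \in A$ is the unit-adjusted product furnished by Lemma~\ref{square} and $h \in F^*$ is the norm from $L = F(\sqrt[\ell]{v\pi})$ constructed in Lemma~\ref{norm}, with $v$ chosen via Lemma~\ref{lift_of_residues}. It then remains to verify, at each generic point $x \in \XX_{(1)}$ of every curve that could contribute a residue, that $\partial_x(\zeta - \alpha \cdot (f))$ is unramified at all closed points of $\overline{\{x\}}$. The relevant curves are the $C_i$ (ramification of $\zeta$), the $D_j$ (ramification of $\alpha$), the leftover $E_s$ in $\divisor_\XX(\pi)$ not absorbed by $h$, and the new curves $E_j'$ appearing in $\divisor_\XX(h)$. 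At all other codimension-one points of $\XX$, both $\zeta$ and $\alpha \cdot (f)$ are unramified and there is nothing to check.

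At each $C_i$, assumption A2 gives $\alpha$ unramified at $C_i$; by construction $\nu_{C_i}(\pi) = 1$ and $\nu_{C_i}(h) = 0$, since $\divisor_\XX(h) = -\sum t_s E_s + \sum r_j E_j'$ has support disjoint from every $C_i$. Hence $\nu_{C_i}(f) = 1$, and Lemma~\ref{residue} yields $\partial_{C_i}(\alpha \cdot (f))$ equal to the specialization of $\alpha$ at $C_i$, which by A6 equals $\partial_{C_i}(\zeta)$. So $\partial_{C_i}(\zeta - \alpha \cdot (f)) = 0$. At each $E_j'$ with $r_j$ prime to $\ell$, Lemma~\ref{norm1} gives directly that $\alpha$ is unramified at $E_j'$ and that its specialization is unramified at every closed point of $E_j'$, so $\partial_{E_j'}(\alpha \cdot (f)) = r_j \cdot \overline{\alpha}|_{E_j'}$ is unramified there; if $r_j$ is divisible by $\ell$ the residue vanishes. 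For the leftover $E_s$ with $s > d_1$, the choice of $\PP''$ forces $E_s$ to be disjoint from every $D_j$, so $\alpha$ extends to an Azumaya algebra at each closed point of $E_s$ and its specialization $\overline{\alpha}|_{E_s}$ is automatically unramified there.

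At each $D_j$, since $D_j \neq C_i$, we have $\partial_{D_j}(\zeta) = 0$. Lemma~\ref{square} gives $\pi \in F_{D_j}^{*\ell}$, so $\alpha \cdot (\pi)$ is trivial in $H^3(F_{D_j}, \Z/\ell(2))$ and thus $\partial_{D_j}(\alpha \cdot (f)) = \partial_{D_j}(\alpha \cdot (h))$. Writing $h = N_{L/F}(g)$ and invoking the projection formula $\alpha \cdot (h) = \cores_{L/F}(\alpha|_L \cdot (g))$, the isomorphism $L \otimes_F F_{D_j} \simeq F_{D_j}(\sqrt[\ell]{v})$ combined with Lemma~\ref{lift_of_residues} shows that $\alpha|_L$ is unramified --- indeed trivial in the characteristic-$\ell$ residue case --- at the valuations $w$ of $L$ above $D_j$. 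The residue $\partial_w(\alpha|_L \cdot (g))$ is then $w(g)$ times the specialization of an unramified class, and compatibility of residues with corestriction expresses $\partial_{D_j}(\alpha \cdot (h))$ as the corestriction of such Azumaya specializations; a further local specialization at each closed point $P \in D_j$ then shows this residue is unramified at $P$.

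The main obstacle is precisely this last step at $D_j$, which requires one to simultaneously exploit three separate constructions: that $\pi$ is locally an $\ell$-th power at $D_j$ (killing $\alpha \cdot (\pi)$), that $v$ was built to be a residue-lift of $\alpha$ at $D_j$ (so that $\alpha$ becomes unramified, or trivial, over $F_{D_j}(\sqrt[\ell]{v})$), and that $h$ is a norm from $L$ (so the projection formula converts the calculation to a corestriction of a specialization of an Azumaya class). Assumption A10 is crucial in the mixed-characteristic residue situation, since it is what allows Lemma~\ref{lift_of_residues} to produce a $v$ whose cyclic extension actually annihilates the residue of $\alpha$ at each $D_j$ of characteristic $\ell$ without introducing further ramification of $[a)$; the chilly-loop condition A5 and the transversality conditions A7--A11 ensure that the various residue lifts and norm computations glue compatibly across the closed points where several of the curves meet.
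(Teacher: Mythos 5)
Your overall strategy and choice of $f = h\pi$ match the paper exactly, and your handling of the $C_i$, leftover $E_s$, and $E_j'$ cases is correct. The gap is at $D_j$.

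At a $D_j$ where $\alpha$ is ramified, the paper does not merely show $\alpha|_L$ is unramified at the places above $D_j$ and then push specializations around; it shows that $\alpha \otimes F_{D_j}(\sqrt[\ell]{v}) = 0$ outright, in \emph{both} residue-characteristic cases. You grant this only when $\text{char}(\kappa(D_j)) = \ell$. When $\text{char}(\kappa(D_j)) \neq \ell$, the needed input is Lemma~\ref{ind_ell}: since $\alpha = [a,b)$ has period $\ell$ and is ramified at $D_j$, it has index exactly $\ell$ over $F_{D_j}$, and Lemma~\ref{lift_of_residues} says $F_{D_j}(\sqrt[\ell]{v})$ is the unramified degree-$\ell$ extension whose residue field is $\partial_{D_j}(\alpha)$; Lemma~\ref{ind_ell} then forces $\alpha \otimes F_{D_j}(\sqrt[\ell]{v}) = 0$. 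Once that is in hand, $\alpha|_L \cdot (g) = 0$ in $H^3(L \otimes F_{D_j}, \Z/\ell(2))$, so $\alpha \cdot (h) = \cores_{L/F}(\alpha|_L \cdot (g)) = 0$ in $H^3(F_{D_j}, \Z/\ell(2))$, and combined with $\alpha \cdot (\pi) = 0$ from Lemma~\ref{square} the residue $\partial_{D_j}(\zeta - \alpha\cdot(f))$ vanishes entirely.

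Your substitute argument — writing $\partial_w(\alpha|_L \cdot (g))$ as a specialization raised to $w(g)$ and then appealing to compatibility of residue and corestriction followed by ``a further local specialization'' at closed points of $D_j$ — does not close this gap. You have no control on the valuations $w(g)$ or on the local behavior of $g$ at points of $T$ over closed points of $D_j$, and the assertion that the corestricted residue is automatically unramified at every $P \in D_j$ is exactly what needs proof. The clean route is to use Lemma~\ref{ind_ell} to kill $\alpha$ over the residue-lift extension and conclude that the residue at $D_j$ is zero, not merely unramified.
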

 
\begin{proof} Let $\PP$ be a finite set of closed points of $\XX$ containing $C_i \cap C_j$ for all $i \neq j$,
$D_i \cap D_j$ for all $i \neq j$, $C_i \cap D_j$ for all $i$ and $j$  and 
at least one point from each $C_i$ and $D_j$. 
Let $A$ be the semi local ring at $\PP$. Let $\pi_i  \in A$ be  primes defining $C_i$.

Let $u \in A$ be a unit in $A$ as in (\ref{square}) and $\pi = u\prod_1^r \pi_i \in A$.
Then $div_{\XX}(\pi) = \sum C_i + \sum_i^d t_iE_i$ with   $E_s \cap  \PP = \emptyset$. 
 In particular $C_i \neq E_s$,  $D_j \neq E_s$ for all $i$, $j$ and $s$. 
Let $\PP'$ be a finite set of points containing $\PP$,  $C_i \cap E_s$,  $D_j \cap E_s$ for all $i$, $j$ and $s$ 
and at least one point from each $E_s$.   
Let $A'$ be the semi local ring at $ \PP'$.  Let $v \in A'$ be as in  (\ref{lift_of_residues}).

Let $V_1, \cdots, V_q$ be the irreducible curves in $\XX$ where $F(\sqrt[\ell]{v\pi})$ is ramified.
Let $ \PP'' = \PP \cup (\cup (D_j \cap E_s)) \cup  (\cup (D_j \cap E_s))$. After reindexing $E_s$, 
we assume that there exists $d_1 \leq d$ such that  $E_s \cap\PP'' \neq \emptyset$ for $1 \leq s \leq d_1$
 and $E_s \cap\PP''  = \emptyset$ for $d_1 + 1 \leq s \leq d$. 
Let $A''$ be the regular semi local ring at $ \PP''$.  Let $h \in F^*$ be as in (\ref{norm}). 
  
We claim that $f = h \pi$ has the required properties, i.e. $ \partial_{x}(\zeta - \alpha \cdot (f))$ is unramified 
at every discrete valuation of $\kappa(x)$ for all   $x \in \XX_{(1)}$. 

Let $x \in \XX_{(1)}$ and  $D$  be the closure of $\{ x \}$. Suppose $D = C_i$ for some $i$.
Then $h$ is a unit at $C_i$ (\ref{norm}),   $\alpha$ is unramified at $C_i$ (assumption A2)) 
and $\pi$ is a parameter at $C_i$, we have  $\partial_{C_i}(\alpha \cdot (f))$ is the specialization of 
$\alpha$ at $C_i$ (\ref{residue}). Hence, 
 by the assumption A6),  $\partial_{C_i}(\zeta - \alpha \cdot (f)) = 0$. 
 
Suppose that $D  = D_j$ for some $j$.   By the assumption A2),  $\partial_{D_j}(\zeta)  = 0$.
Suppose $\alpha$ is unramified at $D_j$. Since $\pi$ and $h$ are  units at $D_j$, 
$\partial_{D_j}(\alpha \cdot (f)) = 0$ (\ref{residue}). 
Suppose $\alpha$ is ramified at $D_j$. If char$(\kappa(D_j)) = \ell$, then 
by the choice $\alpha \otimes F_{D_j}(\sqrt[\ell]{v}) = 0$ (\ref{lift_of_residues}).
Suppose that char$(\kappa(D_j)) \neq \ell$. 
Since $F_{D_j}(\sqrt[\ell]{v})$ is unramified with residue field equal to $\partial_{D_j}( \alpha)$
(\ref{lift_of_residues}),  we have $\alpha \otimes  F_{D_j}(\sqrt[\ell]{v}) = 0$ (\ref{ind_ell}). 
In particular, in either  case, $\alpha \cdot (g) = 0   \in H^3( F_{D_j}(\sqrt{v}), \Z/\ell(2))$. 
Since $\pi \in F_{D_i}^\ell$  (\ref{square}),  $L \otimes F_{D_j}  = F_{D_j}(\sqrt[\ell]{v})$
and $\alpha \cdot (\pi) = 0\in H^3(F_{D_j}, \Z/\ell(2))$.
Thus $\alpha \cdot (h) = $ cor$_{L/F}(\alpha \cdot (g)) = 0 \in H^3(F_{D_j}, \Z/\ell(2))$
and  $\partial_{D_j}(\alpha \cdot (h)) = 0$.  Hence $\partial_{D_j}(\zeta - \alpha \cdot (f)) =   0$.

Suppose $D \neq C_i$ and  $D_j$ for all $i$ and $j$. Then $\partial_D(\zeta) = 0$ and $\alpha$ is unramified at $D$.
If $\nu_D( f) $ is a multiple of $\ell$, then $\partial_D( \alpha \cdot (f)) = 0$. 
Suppose that $\nu_D(f)$ is coprime to $\ell$.  Since
$div_{\XX}(\pi) = \sum C_i + \sum_1^dt_iE_i$ (\ref{square}),
div$_\XX(h) = - \sum_1^{d_1}t_sE_s + \sum r_iE'_i$  (\ref{norm}) and $f = h\pi$,
we have 
div$_\XX(f) = \sum C_i +  \sum_{d_1+1}^d t_s E_s + \sum r_iE_i' $.
Since $\nu_D(f)$ is coprime to $\ell$ and $D \neq C_i$ for all $i$, $D = E_s$ for some $ d_1 +1 \leq s \leq d$
or $D = E'_i$ for some $i$. 

If $D = E'_i$, then by (\ref{norm1}), $\overline{\alpha}$ is unramified at every discrete 
valuation of $\kappa(D)$ centered on $D$. 
Suppose $D = E_s$ for some $d_1 +1 \leq s  \leq d$.  Then by the choice of $d_1$,
$E_s \cap \PP'' = \emptyset$ and hence $E_s \cap D_j = \emptyset$ for all $j$.   
Let $P \in E_s$. Then $\alpha$ is unramified at $P$ (assumption A2)) and hence
$\overline{\alpha}$ is unramified at $P$. In particular $\overline{\alpha}$ is   unramified at every 
discrete valuation of $\kappa(E_s)$ centered at $P$.
Since $\alpha$ is urnamified at $E_s$,  $\partial_{E_s}(\alpha \cdot (f)) = 
\overline{\alpha}^{\nu_{E_s}(f)}$ (\ref{residue}). Since $\overline{\alpha}$ is unramified at
 every discrete valuation of $\kappa(E_s)$ centered on $E_s$, $\partial_{E_s}(\alpha \cdot (f))$
 is unramified at every discrete valuation of $\kappa(E_s)$ centered on $E_s$. 
 Hence $f$ has the required property.
 \end{proof}
  
  \section{ Divisibility of elements in $H^3$ by symbols in $H^2$} 
   Let $K$ be a global field  or a local field and $F$ the function field of  a curve over $K$. 
 If $K$ is a number field   or a local field, let $R$ be the ring of integers in $K$. 
 If $K$ is a global field of positive characteristic, let $R$ be the field of constants of $K$.
 Let $\XX$ be a regular proper model of $F$ over Spec$(R)$.  Let $\ell$ be a prime not equal to char$(K)$. 
 Suppose that $K$ contains a primitive $\ell^{\rm th}$ root of unity $\rho$.
 Then for any $P \in \XX_{(2)}$, $\kappa(P)$ is a finite field. Hence if char$(\kappa(P)) = \ell$, then  
$\kappa(P) = \kappa(P)^\ell$.

 Thus we have a complex  (cf. \ref{complex})
$$
0 \to H^3(F,  \Z/\ell(2))  \buildrel{\partial}\over{\to} \oplus_{x \in \XX_{(1)}}H^2(\kappa(x),  \Z/\ell(1)) 
\buildrel{\partial}\over{\to} \oplus_{P\in \XX_{(2)}} H^1(\kappa(P), \Z/\ell).
$$

Let $\zeta \in H^3(F, \Z/\ell(2))$ and $\alpha = [a, b) \in H^2(F, \Z/\ell(1))$.
In this section we prove (see \ref{localglobal}) a certain  local global principle for divisibility of $\zeta$ by $\alpha$
if $(\XX, \zeta, \alpha)$ satisfies certain  assumptions  (see \ref{assumptions2}).

For a sequence of blow-ups $\eta : \YY \to \XX$ and for an irreducible curve $C$ in $\XX$, 
we denote the strict transform of $C$ in $\YY$ by $C$ itself.

We begin with the following

\begin{lemma} 
\label{blowups1}
Suppose $(\XX, \zeta, \alpha)$ satisfies the assumption A1) of   \ref{assumptions}.
Let $\YY \to \XX$ be a sequence of blow ups centered on closed points of $\XX$ which are not in 
$C_i \cap C_j$ for all $i \neq j$.
Let $1 \leq I \leq 11$ with  $I \neq 3, 5, 7$. 
 If $(\XX,  \zeta, \alpha)$ satisfies the    assumption AI) of
\ref{assumptions}, then $(\YY,  \zeta, \alpha)$ also satisfies the 
assumption AI).
\end{lemma}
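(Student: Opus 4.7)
The plan is to reduce to the case of a single blow-up $\eta: \wt\XX \to \XX$ centered at a closed point $P \in \XX$ with $P \notin C_i \cap C_j$ for all $i \neq j$, and then to check each of the eight assumptions individually. The general case follows by induction on the length of the sequence of blow-ups, once the one-step case is established. Throughout I write $E$ for the exceptional divisor over $P$, and I identify the strict transforms of $C_i$ and $D_j$ with the original symbols, as the excerpt dictates.

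I would first address the structural assumptions A1, A2, A6. For A1, the hypothesis $P \notin C_i \cap C_j$ means $P$ lies on at most one $C_i$, so the strict transforms remain regular, and since blowing up replaces $P$ by a $\P^1$ meeting each strict transform through $P$ transversely at a single point, normal crossings are preserved; if $E$ appears in $\mathrm{ram}_{\wt\XX}(\zeta)$, it is itself regular and meets the other curves transversely. An identical argument handles the $D_j$'s for A2, with the extra observation that no exceptional divisor can coincide with a strict transform of some $C_i$ (both the generic points and the images in $\XX$ distinguish them). For A6, the residue $\partial_{C_i}(\zeta)$ and the specialization of $\alpha$ at $C_i$ are computed at the generic point of $C_i$, which is unaffected by the blow-up, so the equality transports verbatim.

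Next I would handle the intersection-avoidance and local assumptions A4, A8, A9, A10, A11. For any point $P' \in \wt\XX$ not on $E$, the local ring $A_{P'}$ coincides with $A_{\eta(P')}$, and every condition is inherited directly from $\XX$. For $P' \in E$ lying on both $C_i$ and $C_j$ (strict transforms), one would need $\eta(P') = P \in C_i \cap C_j$, contradicting our hypothesis on $P$; hence the chain of inclusions $C_i \cap C_j$ on $\wt\XX$ reduces to $C_i \cap C_j$ on $\XX$, which immediately yields A9 and the relevant half of A11. Similarly, for A8, a point $P' \in E \cap C_i \cap D_s$ would force $P \in C_i \cap D_s$, and A8 on $\XX$ would then place $P$ in some $C_i \cap C_j$, again contradicting the hypothesis on $P$; so A8 on $\wt\XX$ follows from A8 on $\XX$. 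For A4, a point $P' \in E$ has $\mathrm{char}(\kappa(P')) = \mathrm{char}(\kappa(P))$, so the characteristic condition descends. For A10, if $P' \in \wt\XX_{(2)}$ has $\mathrm{char}(\kappa(P')) = \ell$ and lies on some $D_i$ (strict transform or $E$ itself), then either $\eta(P') \in D_i$ on $\XX$ or $P \in \mathrm{ram}_\XX(\alpha)$ (the only way $E$ can appear in $\mathrm{ram}_{\wt\XX}(\alpha)$, since otherwise $\alpha$ is unramified on $A_P$ and remains so after pullback); in both cases A10 on $\XX$ supplies $(1-a)/(\rho-1)^\ell \in A_{\eta(P')}$, and the flat inclusion $A_{\eta(P')} \hookrightarrow A_{P'}$ gives the required membership.

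The main obstacle, and the reason A3, A5, A7 are excluded, is that blowing up a point $P$ lying on $C_i \cap D_s$ or on $D_i \cap D_j$ creates an exceptional $E$ that meets both strict transforms, so non-intersection conditions among different strata can be destroyed and chilly loops can be created. The hypothesis $P \notin C_i \cap C_j$ is precisely what one needs to block the problematic configurations in A8, A9, and A11 from arising on $E$; every other assumption either is local on the generic point of an individual curve (A6), or is a membership condition in a local ring that is preserved by the flat maps $A_{P} \hookrightarrow A_{P'}$ (A10), or involves only the residue characteristic of points (A4), each of which behaves well under an arbitrary blow-up of a closed point.
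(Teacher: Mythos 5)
There is a genuine gap. The crux of the paper's proof — stated in its second sentence — is that the exceptional curve $E$ of the blow-up at $Q$ is \emph{not} in $\mathrm{ram}_{\YY}(\zeta)$, and this is obtained by applying Corollary~\ref{curve-point}: because $Q \notin C_i \cap C_j$ for $i \neq j$ and A1) holds on $\XX$, at most one curve of $\mathrm{ram}_\XX(\zeta)$ passes through $Q$, and since $\kappa(Q)$ is finite, $\zeta$ is then unramified at every discrete valuation of $F$ centered at $Q$, in particular at $\nu_E$. Your proposal never establishes this fact, and in fact your parenthetical ``if $E$ appears in $\mathrm{ram}_{\wt\XX}(\zeta)$, it is itself regular and meets the other curves transversely'' shows you are allowing for exactly the scenario that must be ruled out.

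Why this matters: on $\YY$, the curves appearing in A6), A8), A9), A11) are the curves of $\mathrm{ram}_\YY(\zeta)$, not the strict transforms of the $C_i$'s of $\XX$. If $E$ were one of them, your check of A6) would be incomplete, since you only compare $\partial_{C_i}(\zeta)$ with the specialization of $\alpha$ at the \emph{original} generic points; nothing in your argument forces $\partial_E(\zeta)$ to equal the specialization of $\alpha$ at $E$. Likewise A2) could fail outright in the (a priori possible) situation where $E$ is ramified for both $\zeta$ and $\alpha$, since A2) forbids $D_i = C_j$. Your handling of A8) on $E$ is also incomplete in the case $D_s = E$: there you should note that $E$ being ramified for $\alpha$ forces $Q$ to lie on some $D_t$ of $\XX$ and then invoke A8) on $\XX$; you do make the analogous observation for A10), so this is a smaller point, but for A6) and A2) the missing step is essential. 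Once you add the opening reduction ``by Corollary~\ref{curve-point}, $\zeta$ is unramified at $E$, hence $\mathrm{ram}_\YY(\zeta) = \{C_1, \ldots, C_r\}$'' the rest of your case analysis for A4) and A10) (and the trivially transported A1), A2), A6), A8), A9), A11)) matches the paper's argument.
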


\begin{proof} Let $Q $ be a closed point  of $\XX$ which is not in $C_i \cap C_j$ for $i \neq j$
and   $\eta: \YY \to \XX$   a simple blow-up at $Q$.   It is enough to prove the lemma for
 $(\YY, \zeta, \alpha)$. 
 
Let $E$ be the exceptional curve in $\YY$.  Since $Q \not\in C_i \cap C_j $ for $i \neq j$ and 
$(\XX, \zeta, \alpha)$ satisfies A1) of \ref{assumptions}, 
by (\ref{curve-point}), $\zeta$ is unramfied at $E$.

Let $1 \leq I \leq 11$ with  $I \neq 3, 5, 7$. 
Suppose further $I \neq 4, 10$.
Since the exceptional curve $E$ is not in ram$_{\YY}(\zeta)$,
if $(\XX, \zeta, \alpha)$ satisfies the assumption AI) of \ref{assumptions}, 
then  $(\YY, \zeta, \alpha)$ also satisfies the same  assumption.

Suppose $(\XX,  \zeta, \alpha)$ satisfies the assumption A4)  of \ref{assumptions}.
Suppose char$(\kappa(Q)) = \ell$. Then char$(\kappa(E)) = \ell$ and hence
$(\YY, \zeta, \alpha)$ also satisfies the 
assumption A4)  of \ref{assumptions}.  Suppose char$(\kappa(Q)) \neq \ell$. 
Then char$(\kappa(P)) \neq \ell$ for all $P \in E$ and hence
$(\YY, \zeta, \alpha)$ also satisfies the  assumption A4)  of \ref{assumptions}.

Suppose $(\XX, \zeta, \alpha)$ satisfies the assumption A10) of \ref{assumptions}.
If char$(\kappa(Q)) \neq \ell$, then char$(\kappa(P)) \neq \ell$  for all $P \in E$ and hence 
$(\YY, \zeta, \alpha)$ also satisfies the assumption A10) of \ref{assumptions}.
Suppose that char$(\kappa(Q)) = \ell$. If $Q \not\in  D_i$ for any  $i$, 
then $\alpha$ is unramified at $Q$ and hence $\alpha$ is unramified at $E$.
In particular $E \not\in $ ram$_\YY(\alpha)$ and hence $(\YY, \zeta, \alpha)$ also satisfies the assumption A10) of \ref{assumptions}.
 Suppose $Q \in D_i$ for some $i$. Since $(\XX, \zeta, \alpha)$ satisfies A10) of \ref{assumptions}, 
$\frac{1-a}{(\rho-1)^\ell} \in A_Q$.
Let $P \in E$. Since $A_Q \subset A_P$,   $\frac{1-a}{(\rho-1)^\ell} \in A_P$.
Hence $(\YY, \zeta, \alpha)$ also satisfies the assumption A10) of \ref{assumptions}.
\end{proof}

\begin{lemma} 
\label{blowups2}
Let $\YY \to \XX$ be a sequence of blow ups centered on closed points $Q$ of $\XX$ with char$(\kappa(Q)) \neq \ell$. 
Suppose $(\XX, \zeta, \alpha)$ satisfy the assumptions A1) and A2).
If $(\XX,  \zeta, \alpha)$ satisfies the    assumption A3)  or  A7) of
\ref{assumptions}, then $(\YY,  \zeta, \alpha)$ also satisfies the  same assumption. 
\end{lemma}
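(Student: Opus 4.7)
The plan is to reduce to the case of a single simple blow-up $\eta\colon \YY \to \XX$ at a closed point $Q$ of $\XX$ with $\mathrm{char}(\kappa(Q)) \neq \ell$, and to let $E \subset \YY$ be the exceptional curve. Since $E \cong \P^1_{\kappa(Q)}$, one has $\mathrm{char}(\kappa(E)) = \mathrm{char}(\kappa(Q)) \neq \ell$. The crucial observation driving the argument is that any irreducible curve $D$ on $\XX$ with $\mathrm{char}(\kappa(D)) = \ell$ cannot pass through $Q$: the local ring $\mathcal{O}_{D,P}$ at any closed point $P \in D$ is a discrete valuation ring whose fraction field $\kappa(D)$ has characteristic $\ell$, so $\mathcal{O}_{D,P}$ contains $\mathbb{F}_\ell$, and therefore $\mathrm{char}(\kappa(P)) = \ell$ for every closed point $P$ of $D$. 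In particular, the strict transform of such a $D$ does not meet $E$ on $\YY$; and since the only possible new ramification curve of $\alpha$ on $\YY$ is $E$, no new curve of residue characteristic $\ell$ appears in $\mathrm{ram}_\YY(\alpha)$.

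Given this, the verifications of A3) and A7) on $\YY$ become straightforward intersection arguments. For A3): any strict transform of a $D_i$ with $1 \leq i \leq m$ and any strict transform of a $D_j$ with $m+1 \leq j \leq n$ can meet on $\YY$ only at preimages of points of $D_i \cap D_j \subset \XX$ (empty by the hypothesis) or along $E$; the latter is excluded by the crucial observation applied to the char $\ell$ curve $D_i$. For A7): $\mathrm{ram}_\YY(\zeta)$ is contained in the strict transforms of the $C_i$'s together with possibly $E$; the strict transform of any $C_i$ does not meet the strict transform of a $D_j$ with $1 \leq j \leq m$ off of $E$ (by A7) on $\XX$) nor on $E$ (since $D_j$ does not pass through $Q$), and $E$ itself does not meet such a strict transform for the same reason.

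There is no serious technical obstacle; the argument is essentially careful bookkeeping. The only point requiring care is the observation that the exceptional curve $E$ of a blow-up at a point of residue characteristic not $\ell$ itself has residue characteristic not $\ell$, so that $E$ never intrudes on A3) or A7) as a ``char $\ell$'' ramification curve of $\alpha$, nor can it create new intersections with the char $\ell$ strict transforms of $D_1, \ldots, D_m$.
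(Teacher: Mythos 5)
Your proposal is correct and follows essentially the same approach as the paper. The paper's own proof is a one-liner---it simply records that $\mathrm{char}(\kappa(E)) \neq \ell$ and that $\mathrm{char}(\kappa(P)) \neq \ell$ for every closed point $P$ of $E$, and declares the lemma follows; your write-up makes explicit the implicit step (that a curve $D$ with $\mathrm{char}(\kappa(D)) = \ell$ has all its closed points of residue characteristic $\ell$, so its strict transform misses $E$) and spells out the bookkeeping for A3) and A7) separately.
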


\begin{proof} Let $Q $ be a closed point  of $\XX$  with char$(\kappa(Q))  \neq \ell$ 
and $E$ the exceptional curve in $\YY$.  Since char$(\kappa(E)) \neq \ell$ and 
for any closed point $P$ of $E$ char$(\kappa(P)) \neq \ell$, 
the lemma follows. 
\end{proof}

\begin{assumptions}
\label{assumptions2} Suppose $(\XX, \zeta, \alpha)$ satisfies the following.
\begin{enumerate} 
\item[{B1)}] ram$_\XX(\zeta)  = \{ C_1, \cdots ,   C_r \}$,   
$C_i$'s are  irreducible  regular curves with normal crossings \\ 
\item[{B2)}] ram$_\XX(\alpha) =  \{D_1, \cdots , D_n \}$ with $D_j$'s irreducible curves 
such that  $C_i \neq D_j$ for all $i$ and $j$  \\
\item[{B3)}]  if $D_s \cap C_i \cap C_j \neq  \emptyset$ for some $s$, $i \neq j$, then char$(\kappa(D_s)) \neq \ell$ \\
\item[{B4)}] if  $P \in D_j$ for some $1 \leq j \leq n$  with char$(\kappa(P)) = \ell$,  then $\frac{1-a}{(\rho-1)^\ell} \in A_P$ \\
\item[{B5)}]   $\partial_{C_i}(\zeta)$ is the specialization of $\alpha$ at $C_i$ for all $i$ \\
\item[{B6)}] if $\ell  = 2$, then $\zeta \otimes F \otimes K_\nu$ is trivial for all real places $\nu$ of $K$ \\
\item[{B7)}] if $\ell = 2$, then $a$ is a sum of two  squares in $F$ \\
\item[{B8)}] for  $1 \leq i < j \leq r$, there exists at most one  $D_s$ with $D_s \cap C_i \cap C_j \neq \emptyset$
and if $P \in D_s \cap C_i \cap C_j$, then $D_s$ is defined  by  $u\pi_i^{\ell-1} + v\pi_j$ at $P$ for some units $u$ and $v$ at $P$ and 
$\pi_i, \pi_j$ primes defining $C_i$ and $C_j$ at $P$. 
\end{enumerate} 
\end{assumptions}

Let $\PP$ be a finite set of closed points of $\XX$ containing $C_i \cap C_j$, $D_i \cap D_j$ for all $i \neq j$, 
$C_i \cap D_j$ for all $i, j$ 
and at least one point from each $C_i$ and $D_j$. Let $A$ be the regular semi local ring at $\PP$ on $\XX$.
For every $P \in \PP$, let   $M_P$ be the maximal ideal of $A$ at $P$.  
For  $1 \leq i \leq r$ and $1 \leq j \leq n$, let $\pi_i \in A$ be a prime defining $C_i$ on $A$ and $\delta_j \in A$ 
a prime defining $D_j$ on $A$.

\begin{lemma} 
\label{blowups}
Suppose  $(\XX, \zeta, \alpha)$ satisfies  the assumptions  \ref{assumptions2}.
Let $\YY \to \XX$ be a sequence of blow ups centered on closed points of $\XX$ which are not in 
$C_i \cap C_j$ for $i \neq j$. Then $(\YY, \zeta, \alpha)$ also  satisfies the assumptions \ref{assumptions2}.
\end{lemma}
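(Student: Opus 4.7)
The plan is to reduce immediately to the case of a single blow-up $\eta : \YY \to \XX$ centered at a closed point $Q$ with $Q \notin C_i \cap C_j$ for all $i \neq j$, and then verify each of the conditions B1)--B8) individually. Write $E \subset \YY$ for the exceptional curve over $Q$. The key initial observation is that $\zeta$ is unramified at $E$: by B1) on $\XX$, the point $Q$ lies on at most one $C_i$, and that $C_i$ is regular at $Q$, so Lemma \ref{curve-point} applied at $Q$ gives $\zeta \otimes F_Q = 0$, whence $\zeta$ is unramified at every discrete valuation of $F$ centered at $Q$, in particular at the one attached to $E$. Since the blow-up is centered away from any $C_i \cap C_j$, the strict transforms of the $C_i$'s on $\YY$ remain regular with normal crossings, giving B1).

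The conditions B5), B6), B7), and B8) are essentially automatic: B5) depends only on the generic points of the $C_i$'s and is model-independent; B6) and B7) are conditions on $\zeta$ and $a$ over $F$ alone; and for B8), since $Q \notin C_i \cap C_j$, each $P \in C_i \cap C_j \subset \YY$ corresponds to a unique such point on $\XX$ with unchanged local ring $A_P$, so the defining equation $u\pi_i^{\ell-1} + v\pi_j$ of $D_s$ at $P$ is inherited verbatim. For B2), the strict transforms of the $D_j$'s are irreducible curves distinct from the strict transforms of the $C_i$'s, and if $E$ itself lies in ram$_\YY(\alpha)$, it too is distinct from every $C_i$. For B3), the exceptional curve $E$ meets at most one strict transform $C_i$ (corresponding to the at most one $C_i$ through $Q$), so $E$ cannot lie in $C_i \cap C_j$ for distinct $i, j$; and for strict transforms of the original $D_s$'s, the intersections with $C_i \cap C_j$ biject with those on $\XX$.

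The principal---though still routine---case is B4). Let $P \in D_j \subset \YY$ be a closed point with char$(\kappa(P)) = \ell$, and set $Q_0 = \eta(P) \in \XX$; then $\kappa(P)/\kappa(Q_0)$ is algebraic and both points are closed, so char$(\kappa(Q_0)) = \ell$ and $A_{Q_0} \subset A_P$. If $D_j$ is the strict transform of an original ramification curve $D$ of $\alpha$ on $\XX$, then $Q_0 \in D$ and B4) for $\XX$ yields $\tfrac{1-a}{(\rho-1)^\ell} \in A_{Q_0} \subset A_P$. If instead $D_j = E$ is a newly ramified exceptional curve, then $Q_0 = Q$; were $Q$ not on any ramification curve of $\alpha$ on $\XX$, then $\alpha$ would be unramified on all of $A_Q$ and hence at every discrete valuation of $F$ centered at $Q$, contradicting $E \in$ ram$_\YY(\alpha)$. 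Therefore $Q$ lies on some such curve, and B4) for $\XX$ again supplies the needed containment, completing the verification of all eight conditions.
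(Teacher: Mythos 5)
Your proof is correct and follows essentially the same strategy as the paper's: reduce to a single blow-up, invoke Lemma~\ref{curve-point} to see $\zeta$ is unramified along the exceptional curve $E$, observe $E$ avoids every $C_i\cap C_j$, and check B1)--B8) one by one. Your treatment of B4) is slightly more systematic than the paper's (you explicitly split into cases according to whether $P$ lies on a strict transform or on $E$), but the underlying observations --- $A_{\eta(P)}\subset A_P$ and $E\notin\mathrm{ram}_\YY(\alpha)$ when $Q$ is off $\mathrm{ram}_\XX(\alpha)$ --- are the same.
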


\begin{proof}  Let $Q $ be a closed point  of $\XX$ which is not in $C_i \cap C_j$ for $i \neq j$ 
and   $\eta: \YY \to \XX$   a simple blow-up at $Q$.   It is enough to show that $(\YY, \zeta, \alpha)$ satisfies the assumptions 
\ref{assumptions2}. 

Let $E$ be the exceptional curve in $\YY$.  Since $Q \not\in C_i \cap C_j $ for $i \neq j$, 
by (\ref{curve-point}), $\zeta$ is unramfied at $E$.
Hence  ram$_\YY(\zeta) = \{C_1, \cdots , C_r \}$ and  $(\YY, \zeta, \alpha)$ satisfies B1). 
We have ram$_\YY(\alpha) \subset \{D_1, \cdots , D_n, E\}$ and hence $(\YY, \zeta, \alpha)$ satisfies B2).
Since $E \cap C_i \cap C_j = \emptyset$ for all $i \neq j$, $(\YY, \zeta, \alpha)$ satisfies B3) and B8). 

Suppose $Q \in D_i$ for some $i$ with char$(\kappa(Q)) = \ell$. Then, by B4), $\frac{1-a}{(\rho-1)^\ell} \in A_Q$  and hence 
$\frac{1-a}{(\rho-1)^\ell} \in A_P$ for all   closed points $P$ of $E$. 
Suppose that  $Q \not\in D_i$ of any $i$. Then $\alpha$ is unramified at $Q$. 
In particular  $\alpha$ is unramified at $E$ and hence  $E \not\in $ ram$_\YY(\alpha)$. Thus
 and $(\YY, \zeta, \alpha)$   satisfies B4).

Since $E$ is not in ram$_\YY(\alpha)$,  $(\YY, \zeta, \alpha)$ satisfies B5). 
 
 Since B6) and B7) do  not depend on the model,    $(\YY, \zeta, \alpha)$ satisfies all the assumptions \ref{assumptions2}. 
\end{proof}

\begin{theorem} 
\label{localglobal}
Let $K$, $F$ and  $\XX$ be as above. Let $\zeta \in H^3(F, \Z/\ell(2))$ and $\alpha = [a, b) \in H^2(F, \Z/\ell(1))$.
Suppose that $F$ contains a primitive $\ell^{\rm th}$ root of unity. If 
 $(\XX, \zeta, \alpha)$ satisfies the assumptions \ref{assumptions2},
then there exists $f \in F^*$ such that $\zeta =  \alpha \cdot (f)$.
\end{theorem}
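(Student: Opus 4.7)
The plan is to reduce the hypothesis from Assumptions \ref{assumptions2} to the stronger Assumptions \ref{assumptions} by a suitable sequence of blow-ups $\YY \to \XX$, apply Theorem \ref{local_global} on $\YY$ to produce $f \in F^*$ for which the residues of $\zeta - \alpha \cdot (f)$ vanish everywhere at closed points, and then invoke a theorem of Kato \cite{K2} to upgrade this to the equality $\zeta = \alpha \cdot (f)$.

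I would perform the blow-ups $\YY \to \XX$ with all centres at closed points lying outside every $C_i \cap C_j$, so that Lemmas \ref{blowups}, \ref{blowups1}, \ref{blowups2} guarantee that the assumptions already attained are preserved throughout. The centres are chosen in sequence to: (i) resolve the singularities and normal-crossings defects of $D_1 \cup \cdots \cup D_n$, upgrading B2) to A2); (ii) separate the characteristic-$\ell$ ramification curves $D_1,\ldots,D_m$ of $\alpha$ from the remaining $D_j$, achieving A3); (iii) remove intersections $P \in D_i \cap D_j$ (with $m+1 \le i<j \le n$) at which $\mathrm{char}(\kappa(P)) = \ell$, achieving A4); (iv) eliminate chilly loops of $\alpha$ via Proposition \ref{chillyloops}; (v) separate each $C_i$ from $D_1,\ldots,D_m$, giving A7); and (vi) arrange that every point of $C_i \cap D_s$ also lies on some $C_i \cap C_j$, giving A8). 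Conditions A9) and A11) are inherited verbatim from B8), A10) from B4) (persisting under blow-ups by the argument of Lemma \ref{blowups1}), and A6) from B5) since the strict transform of each $C_i$ carries the same residue of $\zeta$ and the same specialisation of $\alpha$.

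Once Assumptions \ref{assumptions} are in force on $\YY$, Theorem \ref{local_global} yields $f \in F^*$ such that for each $x \in \YY_{(1)}$ the residue $\partial_x(\zeta - \alpha \cdot (f)) \in H^2(\kappa(x), \Z/\ell(1))$ is unramified at every discrete valuation of $\kappa(x)$ centred on a closed point of $\overline{\{x\}}$. Because $\YY$ is proper over $\Spec(R)$, these exhaust the non-archimedean valuations of $\kappa(x)$ trivial on $R$; together with assumption B6) handling the real archimedean places when $\ell = 2$, this places $\zeta - \alpha \cdot (f)$ in precisely the kernel that Kato's theorem \cite{K2} identifies as zero, whence $\zeta = \alpha \cdot (f)$. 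The main obstacle is arranging the blow-up sequence in step (i) so that A11)---the explicit form $u\pi_i^{\ell-1} + v\pi_j$ of the defining equation of $D_t$ at each surviving triple point $C_i \cap C_j \cap D_t$---survives the embedded resolution, while all of (ii)--(vi) can still be performed at closed points lying outside every $C_i \cap C_j$; controlling this combinatorics simultaneously with the characteristic-$\ell$ behaviour around the $D_j$ is the most delicate ingredient of the argument.
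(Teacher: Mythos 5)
Your proposal follows the paper's overall strategy: blow up to pass from Assumptions~\ref{assumptions2} to Assumptions~\ref{assumptions}, apply Theorem~\ref{local_global}, and finish with Kato's theorems. However, two steps are left open in a way that the paper resolves with specific arguments, and as stated the proposal has genuine gaps there.

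First, your step~(ii)---separating the characteristic-$\ell$ ramification curves $D_1,\ldots,D_m$ from the rest to achieve A3)---is not attainable by a generic blow-up, because the exceptional curve over a point $P \in D_s \cap D_t$ with $\mathrm{char}(\kappa(P)) = \ell$ is again a characteristic-$\ell$ curve, and $\alpha$ may remain ramified along it. The paper's remedy is a quantitative, iterated blow-up: B4) gives $\frac{a-1}{(\rho-1)^\ell} \in A_P$, so $[a)$ is unramified at $P$, and $\nu_{D_t}(b)$ is prime to $\ell$, so one picks $i$ with $\nu_{D_s}(b) + i\,\nu_{D_t}(b) \equiv 0 \pmod{\ell}$; after $i$ blow-ups, each centred at the point $E_{j-1}\cap D_t$, the final exceptional curve $E_i$ has $\nu_{E_i}(b)$ divisible by $\ell$, so $\alpha$ is unramified there and the components of different residue characteristic are separated (and B4) is preserved throughout). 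Acknowledging that this combinatorics is ``the most delicate ingredient'' is not a substitute for this argument.

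Second, your final paragraph invokes only B6) for the real archimedean places, but B6) controls $\zeta$ alone. To conclude that $\zeta - \alpha\cdot(f)$ vanishes over $F_\nu = F\otimes_K K_\nu$ at each real place $\nu$, you also need $\alpha\cdot(f)$ to die over $F_\nu$; this is precisely what B7) ($a$ a sum of two squares, hence a norm from $F_\nu(\sqrt{-1})$) provides, via $H^2(F_\nu(\sqrt{-1}),\Z/2(2)) = 0$ and corestriction. Without B7), the Hasse-principle step for $\partial_x(\zeta - \alpha\cdot(f))$ at the real places of $\kappa(x)$ does not close, so one cannot conclude that $\zeta - \alpha\cdot(f)$ is unramified on $\XX$. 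With both gaps filled, the conclusion follows as you indicate, from \cite[Corollary p.~145]{K2} to pass from unramifiedness on $\XX$ to vanishing over $F_\nu$ for all finite $\nu$, and then \cite[Theorem 0.8(2)]{K2} to get $\zeta = \alpha\cdot(f)$.
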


\begin{proof} Suppose  $(\XX, \zeta, \alpha)$ satisfies the assumptions \ref{assumptions2}.
First we show that there exists a sequence of blow ups $\eta : \YY \to \XX$ such that 
$(\YY, \zeta, \alpha)$ satisfies the assumptions \ref{assumptions}.

Let $P \in \XX_{(2)}$. Suppose $P \in D_s$  for some $s$ and $D_s$ is not regular at $P$ or  $P \in D_s \cap D_t$  
for some $s \neq t$.
Then, by the assumption B8),   $P \not\in C_i \cap C_j$ for all $i \neq j$. 
Thus,   there exists a   sequence of blow ups $\XX' \to \XX$ at closed points which are  not 
in $C_i \cap C_j$  for all $i \neq j$ such that  ram$_{\XX'}(\alpha)$ is  a union of  regular with 
normal crossings.  By (\ref{blowups}), $\XX'$ also satisfies the assumptions \ref{assumptions2}.
Thus,  replacing $\XX$ by $\XX'$ we assume that $(\XX, \zeta, \alpha)$ satisfies the assumptions 
\ref{assumptions2} and $D_i$'s are regular with normal crossings. 
In particular $(\XX, \zeta, \alpha)$ satisfies the assumptions A1) and A2) of \ref{assumptions}. 

Suppose there exists $i \neq j$ and $P \in D_i \cap D_j$ such that char$(\kappa(D_i)) \neq \ell$,
char$(D_j) \neq \ell$ and char$(\kappa(P))=\ell$. Let $\XX' \to \XX$ be the blow-up at $P$
and $E$ the exceptional curve in $\XX'$. Then char$(\kappa(E)) =$ char$(\kappa(P)) = \ell$ and
 $D_i  \cap D_j \cap E = \emptyset$ in $\XX'$. By the assumption B8), $P \not\in C_{i'} \cap C_{j'}$ for 
 all $i' \neq j'$ and hence $\XX'$ satisfies assumptions of \ref{assumptions2} (cf. \ref{blowups})
 and assumptions A1) and A2) of \ref{assumptions} (cf. \ref{blowups1}). Thus replacing $\XX$ by a sequence of  blow-ups 
 at closed points in $D_i \cap D_j$ for $i \neq j$, we assume that $\XX$ satisfies  assumptions of \ref{assumptions2}
 and  assumptions A1), A2) and A4) of \ref{assumptions}.

Since $(\XX, \zeta, \alpha)$ satisfies the   assumptions B4), B5) and B8) of \ref{assumptions2}
$(\XX, \zeta, \alpha)$  satisfies   the assumptions  A6), A9), A10) and A11) of \ref{assumptions}.

Suppose  $P \in C_i \cap D_s$ for some $i, s$ and $P \not\in C_j$ for all $j \neq i$.  
  Since $\zeta$ is unramified at $P$ except at $C_i$, $\partial_{C_i}(\zeta)$ is zero over  
 $\kappa(C_i)_P$ (\ref{curve-point}).
 By the assumption B5),  we have  $\partial_{C_i}(\zeta) = \overline{\alpha}$.
 Thus, by (\ref{curve_point_23}), $\alpha \otimes F_P  = 0$.  Let $\XX' \to \XX$ be the blow-up at $P$ and $E$ the exceptional curve in $\XX'$.
 Since $\alpha \otimes F_P = 0$ and $F_P \subset F_E$, $\alpha$ is unramified at $E$ and hence  
 ram$_{\XX'}(\alpha) = \{ D_1, \cdots , D_n \}$.
 Note that $C_i \cap D_s   = \emptyset$ in $\XX'$. 
 Hence $(\XX', \zeta, \alpha)$ satisfies  the assumption A8) of \ref{assumptions}. 
 Since $P \not\in C_j  $ for 
 all $j \neq i$,   $(\XX', \zeta, \alpha)$ satisfies the assumptions \ref{assumptions2} (\ref{blowups})
 and assumptions  \ref{assumptions} except  possibly A3),  A5)  and A7)  (\ref{blowups1}).
 Thus, replacing $\XX$ by $\XX'$ we assume that $(\XX, \zeta, \alpha)$ satisfies
 assumptions \ref{assumptions2} and   the assumptions    of \ref{assumptions} except possibly  A3),   A5) and A7).

  Let ram$_{\XX} (\alpha) = \{D_1, \cdots, D_m, D_{m+1}, \cdots , D_n \}$ with char$(\kappa(D_s)) = \ell$ for $ 1\leq s \leq m$
 and char$(\kappa(D_t)) \neq  \ell$ for $ m+1 \leq t \leq n$.  Suppose $D_s \cap D_t \neq\emptyset$ for some $1 \leq s \leq m$
 and $m+1 \leq t \leq n$.  Let $P \in D_s \cap D_t$. Then char$(\kappa(P)) = \ell$ and hence 
 $\frac{a-1}{(\rho-1)^\ell} \in A_P$ (assumption B4)).
In particular  $[a)$ is unramified at $P$ (cf.  \ref{epp}).  Since $\alpha$ is ramified at  $D_t$, 
 $\nu_{D_t}(b)$ is coprime to $\ell$ and hence there exists $i$ such that $\nu_{D_s}(b) + i\nu_{D_t}(b)$ is divisible by $\ell$. 
  Let $\XX_1 \to \XX$ be the blow-up at $P$ and $E_1$ the exceptional curve in $\XX_1$.
  We have   $\nu_{E_1}(b)  = \nu_{D_s}(b)  + \nu_{D_t}(b)$. 
  Let $Q_1$ be the point in $E_1 \cap D_t$ and $\XX_2 \to \XX_1$ be the blow-up at $Q_1$.
  Let $E_2$ be the exceptional curve in $\XX_2$. We have $\nu_{E_2}(b) = \nu_{E_1}(b) + \nu_{D_t}(b) = \nu_{D_s}(b) + 2 \nu_{D_t}(b)$.
  Continue this process $i$ times and get $\XX_i \to \XX_{i-1}$ and $E_i$ the exceptional curve in $\XX_i$.
  Then $\nu_{E_i}(b) = \nu_{D_t}(b) + i \nu_{D}(b)$ is divisible by $\ell$.  
  Since $[a)$ is unramified at $P$,     $\alpha$ is unramified at $E_i$. 
  Since char$(\kappa(E_j)) = \ell$ for all $j$, $E_{i-1} \cap D_t = \emptyset$ in $\XX_i$ and $E_i$ is in not
  in ram$_{\XX_i}(\alpha)$. Since $P \not\in C_i \cap C_j$ for all $i \neq j$ (assumption B4)), 
  $\XX_i$ satisfies assumptions \ref{assumptions2} (cf. \ref{blowups}).  Thus, replacing $\XX$ by $\XX_i$, we assume that 
  $D_s \cap D_t = \emptyset$ for all  $1 \leq s \leq m$
 and $m+1 \leq t \leq n$ and $\XX$ satisfies assumptions \ref{assumptions2}. 
 Thus $\XX$ satisfies all the assumptions of \ref{assumptions} expect possibly  A5) and A7) (cf. \ref{blowups1} ).    
 
Suppose $C_i \cap D_t \neq \emptyset$ for some $i$ and $t$. 
Since $(\XX, \zeta, \alpha)$ satisfies the assumptions A8) and A9) of \ref{assumptions}, 
there exists $j \neq i$ such that $C_i \cap C_j \cap D_t \neq \emptyset$. 
Since $(\XX, \zeta, \alpha)$ satisfies the assumption B3) of \ref{assumptions2}, 
char$(\kappa(D_t)) \neq \ell$. Hence $C_i \cap D_t = \emptyset$ for all $i$ and $1 \leq t \leq m$.
In particular $(\XX, \zeta, \alpha)$ satisfies the assumption A7) of \ref{assumptions}
and hence  $(\XX, \zeta, \alpha)$ satisfies  all the assumptions  of \ref{assumptions} except possibly   A5).

Let $P \in \XX_{(2)}$. Suppose that $P$ is a   chilly point for $\alpha$. 
Then $P \in D_s \cap D_t$ for some $D_s , D_t \in $ ram$_\XX(\alpha)$ with $D_s \neq D_t$
with char$(\kappa(P)) \neq \ell$.
In particular $P \not \in C_i \cap C_j$ for all $i \neq j$ (assumption B8)). 
Since there is a sequence of blow-ups $\YY \to \XX$ centered on chilly points of $\alpha$ on $\XX$
with no chilly loops on $\YY$ (\ref{chillyloops}), by (\ref{blowups1}, \ref{blowups2}), replacing $\XX$ by $\YY$,
we assume that $(\XX, \zeta, \alpha)$ satisfies assumptions \ref{assumptions2} and  \ref{assumptions}.

Thus, by (\ref{local_global}), 
there exists $f \in F^*$ such that for every $x \in \XX_{(1)}$, 
 $\partial_x(\zeta - \alpha \cdot (f))$ is unramified at every discrete valuation of 
$\kappa(x)$ centered at a closed point of the closure $\overline{\{ x \}}$ of $\{ x \}$. 
Since $\kappa(x)$ is a global field or a local field, every discrete valuation of $\kappa(x)$ is 
centered on a closed point of $\overline{\{ x \} }$.  Hence 
$\partial_x(\zeta - \alpha \cdot (f))$ is unramified at every discrete valuation of $\kappa(x)$. 
 
For   place $\nu$  of $K$, let  $K_\nu$ be the completion of $K$ at $\nu$ and
 $F_\nu = F\otimes_KK_\nu$. 
 
 Let $\nu$ be a real place of $K$.
 Since $a$ is a sum of  two squares in $F$, $a$ is a norm from the extension  $F_\nu(\sqrt{-1})$.
Let $\tilde{a} \in F_\nu(\sqrt{-1})$ with norm equal to $a$. 
Since $H^2(F_\nu (\sqrt{-1}), \Z/2(2)) = 0$ (\cite[p. 80]{Serre_Gal_coh}) 
and  cor$_{F_\nu(\sqrt{-1})/F_\nu}[\tilde{a}, b)   = [a, b) \otimes F_\nu$, 
$\alpha = [a, b) = 0 \in H^2(F_\nu, \Z/2(2))$.  
Since, by assumption $\zeta \otimes F_\nu = 0$, $\zeta - \alpha \cdot (f) = 0 \in H^3(F_\nu, \Z/2(2))$.

Let $x \in \XX_{(1)}$. Since $\zeta - \alpha \cdot (f) = 0 \in H^3(F_\nu, \Z/2(2))$ for all real places $\nu$ of $K$, 
it follows that $\partial_x(\zeta - \alpha \cdot (f)) = 0 \in H^2(\kappa(x)_{\nu'}, \Z/2(1))$ for all real places $\nu'$ of 
$\kappa(x)$.  Since $\partial_x(\zeta - \alpha \cdot (f))$ is unramified at every discrete valuation of $\kappa(x)$, 
$\partial_x(\zeta - \alpha \cdot (f)) = 0$ (cf. \cite[p. 130]{CFANT}). Hence $\zeta - \alpha \cdot (f)$ is unramified 
on $\XX$.

Let $\nu$ be a finite place of $K$.  Since $\zeta - \alpha \cdot (f)$ is unramified on $\XX$,  
 $(\zeta  - \alpha \cdot (f) ) \otimes _FF_\nu) = 0 \in H^3(F_\nu, \Z/\ell(2))$ (\cite[Corollary p. 145]{K2}).
Hence $\zeta = \alpha \cdot (f)$ (\cite[Theorem 0.8(2)]{K2}).
\end{proof}
 
 \section{Main theorem}
 
 In this section we prove our main result (\ref{main}).
 Let $K$ be a global field  or a local field and $F$ the function field of  a curve over $K$. 
  Let $\ell$ be a prime not equal to char$(K)$. Suppose that $F$ contains a primitive $\ell^{\rm th}$ root of unity $\rho$.
  If $K$ is a number field   or a local field, let $R$ be the ring of integers in $K$. 
 If $K$ is a global field of positive characteristic, let $R$ be the field of constants of $K$.

To prove our main result (\ref{main}), we first show   (\ref{prop}) that  given $\zeta \in H^3(F, \Z/\ell(2))$ with
 $ \zeta \otimes_F (F \otimes_K K_\nu) = 0$ for all real places $\nu$ of $K$, 
there exist $\alpha = [a, b) \in H^2(F, \Z/\ell(1))$ and  a regular proper model $\XX$ of $F$   over $R$ such that 
the triple $(\XX, \zeta, \alpha)$ satisfies the assumptions  \ref{assumptions2}.

Let $\zeta \in H^3(F, \Z/\ell(2))$ be such that $ \zeta \otimes_F (F \otimes_K K_\nu) = 0$ for all real places 
$\nu$ of $K$.
Choose a regular proper model $\XX$ of $F$ over $R$ (cf. \cite[p. 38]{S1}) such that  \\ 
$\bullet$  ram$_{\XX}(\zeta) \cup $ supp$_\XX (\ell) \subset \{ C_1, \cdots, C_{r_1}, \cdots , C_r \}$, where  $C_i$'s  are  irreducible 
regular curves with  normal crossings \\
$\bullet$ for $i \neq j$,  $C_i$ and $C_j$ intersect at most at one closed point \\
$\bullet$    $C_i \cap  C_j  = \emptyset$ if $i, j \leq r_1$ or $i, j > r_1$.

 For $x \in \XX_{(1)}$, let $\beta_x = \partial_x(\zeta)$.
Let $\PP_{0} \subset \cup C_i$ be a finite set of closed points of $\XX$ containing    $C_i \cap C_j$ for $1 \leq i <  j \leq r$, 
and  at least  one closed point from  each $C_i$. 
Let $A$ be the regular semi local ring at the points of $\PP_0$. 
Let $Q \in C_i$ be a closed point. Since $C_i$ is regular on $\XX$, $Q$ gives a discrete valuation 
$\nu^i_Q$ on $\kappa(C_i)$.

 \begin{lemma}
 \label{choice_of_a}  There exists   $a \in A$ such that  \\
 $\bullet$ $\frac{a-1}{(\rho-1)^\ell} \in A$ and $[a )$ is unramified on $A$ \\
$\bullet$ for   $ 1 \leq i \leq r_1$ and $P \in C_i \cap \PP_0$,  
 $ \partial_P(\beta_{x_i} )  = [a(P) )$    \\
 $\bullet$ for   $ r_1 + 1 \leq i \leq r$ and $P \in C_i \cap \PP_0$,  $ \partial_P(\beta_{x_i})   = [ a(P) ) ^{ -1}$ \\
 $\bullet$ if $P \in \PP_0$ and $P \not\in C_i \cap C_j$ for all $i \neq j$, then $[a(P))$ is the trivial extension \\
 $\bullet$  if $\ell = 2$, then $a$ is a sum of two squares in $A$.  
  \end{lemma}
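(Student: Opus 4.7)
The plan is to apply Lemma \ref{choice_of_a_general} to the semi-local ring $A$, prescribing at each maximal ideal $M_P$ (corresponding to $P \in \PP_0$) a cyclic-or-split extension $[u_P)$ of $\kappa(P)$ tailored to the required residues. The key observation is that the bipartite incidence structure of the $C_i$'s, combined with Corollary \ref{complex_P}, makes the sign conventions in the third and fourth items automatically compatible at every intersection point.

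First I would partition $\PP_0$ by incidence. Because $C_i \cap C_j = \emptyset$ whenever $i,j$ lie in the same group, every $P \in \PP_0$ either lies on a single $C_i$ or lies on exactly one pair $C_i \cap C_j$ with $i \leq r_1 < j$. In the single-curve case, all curves through $P$ at which $\zeta$ can ramify reduce to $C_i$ alone, so Corollary \ref{curve-point} gives $\zeta \otimes F_P = 0$, hence $\partial_P(\beta_{x_i}) = 0$; I set $[u_P)$ to be the trivial (split) extension, corresponding to $u_P = 1$ or $u_P = 0$ according as char$(\kappa(P))$ differs from or equals $\ell$. In the intersection case, I set $[u_P) := \partial_P(\beta_{x_i})$, matching the $i \leq r_1$ side; Corollary \ref{complex_P} applied to the pair $C_i, C_j$ (the only curves through $P$ at which $\zeta$ can ramify, again by the bipartite structure) then yields $\partial_P(\beta_{x_j}) = \partial_P(\beta_{x_i})^{-1} = [u_P)^{-1}$, which is precisely the fourth item.

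Next, invoking Lemma \ref{choice_of_a_general} with these prescribed extensions produces $a \in A$ such that $[a)$ is unramified on $A$ with residue $[u_P)$ at each $M_P$, which establishes the second, third, fourth, and fifth items. Inspecting the construction in the proof of Lemma \ref{choice_of_a_general}, the element $a$ is built in the form $a = 1 - b(\rho-1)^\ell$ with $b \in A$ obtained via the Chinese remainder theorem (and Lemma \ref{saltman_exact_seq}); hence $(a-1)/(\rho-1)^\ell = -b \in A$, giving the first item. For $\ell = 2$, each $\kappa(P)$ is a finite field, so the second half of Lemma \ref{choice_of_a_general} produces $a$ as a sum of two squares in $A$, and the explicit forms $a = (1+4x)^2 + (2y)^2$ in residue characteristic $\neq 2$ and $a = 1 + (2y)^2$ in residue characteristic $2$ still have $(a-1)/4 \in A$, so all items are preserved simultaneously.

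The point that warrants extra care is that passing a trivial $[u_P)$ into Lemma \ref{choice_of_a_general} is legitimate even though the lemma is stated for cyclic extensions of degree exactly $\ell$; the proof there manipulates only the element $u_P \in \kappa(P)$ and never uses that $[u_P)$ is a proper field extension, so the split case is harmless and yields the required triviality in the fifth item.
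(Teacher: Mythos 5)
Your proof is correct and takes essentially the same route as the paper's: prescribe local data $[u_P)$ at each $P \in \PP_0$ using the bipartite incidence structure of the $C_i$ and the Kato-complex relation (Corollary \ref{complex_P}), then lift by Lemma \ref{choice_of_a_general} and read off the first and fifth items from the explicit form $a = 1 - b(\rho-1)^\ell$ (resp. $a = (1+4x)^2 + (2y)^2$) produced there. The only cosmetic deviations are that the paper cites \ref{complex_P} directly at the non-intersection points where you route through \ref{curve-point}, and your closing remark on the legitimacy of feeding a split $[u_P)$ into \ref{choice_of_a_general} makes explicit a point the paper leaves implicit.
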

 
 \begin{proof}   
 Let $P \in \PP_0$.  Suppose  $P \in C_i \cap C_j$ for some $i <  j$.
 Then, by the choice of $\XX$, the pair $(i, j)$ is uniquely determined by $P$.   
 Let $u_P \in \kappa(P)$ be such that  $\partial_P(\partial_{x_i}(\zeta))   = [ u_P )$.    
If $P \not \in C_i \cap C_j$ for all $i \neq j$, let $u_P  \in \kappa(P)$ with $[u_P)$ the trivial 
extension. 

Then, by (\ref{choice_of_a_general}), there exists $a \in A$ such that for 
every $P \in \PP_0$, the cyclic extension $[a)$ over $F$  is unramified on $A$ 
with  the residue field $[a(P))$ of $[a)$ at $P$ is $[u_P)$. Further if $\ell = 2$, choose  
$a$ to be a  sum of two squares in $A$ (\ref{choice_of_a_general}).
From the proof of (\ref{choice_of_a_general}), we have $\frac{a-1}{(\rho-1)^\ell} \in A$. 
 
Let $P \in \PP_0$.  
Suppose that $P \in C_i$ for some $i$ and   $P \not\in  C_j$ for all $i \neq j$. 
Then $\partial_P(\partial_{x_i}(\zeta)) = 1$ (\ref{complex_P}) and by the choice of $a$ and  $u_P$,
we have $[a(P)) = [u_P) = 1$.  
Suppose that $P \in C_i \cap C_j$ for some $i \neq j$. Suppose $i < j$. Then by the choice of 
$a$ and $u_P$ we have $\partial_P(\partial_{x_i}(\zeta)) = [u_P) = [a(P))$.
Suppose $i > j$.  Then by the choice of $a$ and $u_P$ we have $\partial_P(\partial_{x_j}(\zeta)) = 
[u_P) = [a(P))$. Since 
$\partial_P(\partial_{x_i}(\zeta))  = \partial_P(\partial_{x_j}(\zeta))^{-1}$ (\ref{complex_P}),
we have $\partial_P(\partial_{x_i}(\zeta)) = [a(P))^{-1}$.
Thus $a$ has the required properties. 
  \end{proof}

Let $a \in A$ be as in (\ref{choice_of_a}). 
%
%
%
%
Let $L_1, \cdots , L_d$ be the irreducible curves in $\XX$
which are in the support of $a$ or  $\frac{a-1}{(\rho-1)^\ell}$.
Then the cyclic extension $L = [a) = F(\sqrt[\ell]{a})$ is unramified at any irreducible curve 
in $\XX$ which is not equal to $L_j$ for any $j$ (cf. \ref{epp}).

\begin{lemma}
\label{properties_of_a2}
Then $L_i \cap \PP_0 = \emptyset$ for all $i$. In particular $L_i \neq C_j$ for all $i, j$ and char$(\kappa(L_i)) \neq \ell$. 
\end{lemma}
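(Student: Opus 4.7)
The plan is to show that the element $a$ provided by (\ref{choice_of_a}) is a unit in the semi-local ring $A$ and that $b := \frac{a-1}{(\rho-1)^\ell}$ already lies in $A$; these two facts together rule out any curve through a point of $\PP_0$ from being in the support of either $a$ or $b$, giving the lemma.

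To verify $a \in A^\times$, I examine the construction in (\ref{choice_of_a}), which invokes (\ref{choice_of_a_general}): one has $a = 1 - b(\rho-1)^\ell$ with $b \in A$ chosen to reduce correctly at each maximal ideal $M_P$ of $A$. At a point $P \in \PP_0$ with char$(\kappa(P)) \neq \ell$, the element $\rho - 1$ is invertible at $P$, and the reduction of $a$ modulo $M_P$ equals the Kummer representative $u_P \in \kappa(P)^\times$, which is a unit because one takes an $\ell$-th root of it (for the trivial-extension case one still chooses $u_P \in \kappa(P)^{*\ell}\subset\kappa(P)^\times$). At a point $P$ with char$(\kappa(P)) = \ell$, one has $\rho \equiv 1 \pmod{M_P}$, hence $a \equiv 1 \pmod{M_P}$. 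In both cases $a \in A_P^\times$, so $a \in A^\times$. The containment $b \in A$ is part of the conclusion of (\ref{choice_of_a}) itself.

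Now suppose for contradiction that some $L_i$ meets $\PP_0$, say $P \in L_i \cap \PP_0$. Then $L_i$ determines a height-one prime of $A_P$. Since $a \in A_P^\times$, the valuation $\nu_{L_i}(a) = 0$, so $L_i$ is not in the support of $a$. Since $b \in A_P$, one has $\nu_{L_i}(b) \geq 0$, so $L_i$ is not in the pole divisor of $b$; in particular, in the ramification-theoretic reading of ``support'' that is relevant to $[a)$ via Artin--Schreier (cf.\ (\ref{epp})), $L_i$ is not in the support of $b$ either. This contradicts the definition of $L_i$, so $L_i \cap \PP_0 = \emptyset$.

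The ``in particular'' consequences are then formal: $L_i \neq C_j$ because $\PP_0 \cap C_j \neq \emptyset$ for each $j$ while $L_i \cap \PP_0 = \emptyset$; and since by the choice of $\XX$ we have $\mathrm{supp}_{\XX}(\ell) \subset \{C_1, \ldots, C_r\}$, any irreducible curve with residue characteristic $\ell$ must coincide with some $C_j$, so char$(\kappa(L_i)) \neq \ell$. The only minor obstacle is fixing the interpretation of ``support'' of $b$: I treat it as the pole locus, which is precisely what is needed to apply (\ref{epp}) to conclude $[a)$ is unramified outside $\{L_j\}$; if one instead insists on the broader reading that includes zeros of $b$, one uses the freedom available in the choice of $u_P$ (when the prescribed residue extension is trivial, and when $\kappa(P)$ is large enough) to ensure $b$ is actually a unit at each $P \in \PP_0$, and the same argument goes through.
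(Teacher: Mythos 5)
Your proof is correct and follows the same basic route as the paper's own (very terse) proof, but you are noticeably more careful in two places, and the extra care is warranted.

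The paper's proof is a one-liner: ``By the choice of $a$, $a \in A$ and $\frac{a-1}{(\rho-1)^\ell} \in A$ (\ref{choice_of_a}). Hence $\PP_0 \cap L_i = \emptyset$.'' As you observe, $a \in A$ alone only forbids poles of $a$ at $\PP_0$, whereas the definition of the $L_i$ as ``the curves in the support of $a$ or $\frac{a-1}{(\rho-1)^\ell}$'' would, read literally, also include zero curves. Your verification that $a$ is actually a unit in $A$ --- tracing $a = 1 - b(\rho-1)^\ell$ through (\ref{choice_of_a_general}) and checking $a \bmod M_P$ at each $P \in \PP_0$ in both the char $\neq \ell$ case (where $a(P) = u_P \in \kappa(P)^\times$) and the char $= \ell$ case (where $\rho - 1 \in M_P$ forces $a \equiv 1$) --- is exactly the step the paper leaves implicit, and it is the step that closes this gap.

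Your second point, about the zeros of $\frac{a-1}{(\rho-1)^\ell}$, is likewise a genuine (if minor) imprecision in the paper. Your first resolution is the right one: the only role of the $L_j$ in the sequel is to contain all curves where $[a)$ is ramified (the sentence immediately after their definition invokes \ref{epp} for exactly this), and for char-$\ell$ primes only \emph{poles} of $\frac{a-1}{(\rho-1)^\ell}$ can contribute to ramification. With that ``pole-locus'' reading, $\frac{a-1}{(\rho-1)^\ell} \in A$ suffices and your argument goes through. Your alternative fix (choosing $u_P$ to keep $b$ a unit) should be used with some caution: when the prescribed residue extension at $P$ is trivial and $\kappa(P)^{*}$ is entirely $\ell$-th powers --- e.g.\ $\ell = 2$, $\kappa(P) = \mathbb{F}_3$ --- you are forced into $u_P = 1$ and hence $b(P) = 0$, so that route does not always work. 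Stick with the first resolution; it matches the way the $L_j$ are actually used. The ``in particular'' consequences you derive are exactly the paper's.
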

  
\begin{proof} By the choice of $a$, $a \in A$ and $\frac{a-1}{(\rho-1)^\ell} \in A$  (\ref{choice_of_a}).
Hence $\PP_0 \cap L_i = \emptyset$ for all $i$. Since $\PP_0$ contains at least one point from each $C_j$,
   $L_i \neq C_j$ for all $i$ and $j$.
Since supp$_\XX(\ell) \subset \{ C_1, \cdots , C_r  \}$,  char$(\kappa(L_i)) \neq \ell$ for all $i$.  
\end{proof}

   Let $\PP_1 \subset \cup_j L_j$ be a  finite set of closed points of $\XX$ consisting of  $L_i \cap L_j$ for $i \neq j$, 
 $L_i \cap C_j$, one point from each $L_i$.
  Since  $L_i \cap \PP_0 = \emptyset$ for all $i$ (\ref{properties_of_a2}), 
    $\PP_0 \cap \PP_1 = \emptyset$. 
 
Let $\PP  = \PP_0 \cup \PP_1$ and  
 $B$ be the semi local ring at $\PP$ on $\XX$. 
For each $i$ and $j$, let $\pi_i \in B$ be a prime defining $C_i$ and $\delta_j \in B$ a prime defining $L_j$.

 \begin{lemma}
 \label{local_choice_of_b}  
 For each $P \in C_i \cap \PP_1$, let
 $n_i^P$ be a positive integer. 
 Then for each $i$, $1 \leq i \leq r$, there exists   $b_{i} \in B/(\pi_i) \subset \kappa(C_i)$ such that \\
 $\bullet$  $\partial_ {C_i}(\zeta) = [ a( C_i),    b_i )$ \\
 $\bullet$  $\nu^i_P(b_{i}) = 1$ for all  $P \in  C_i \cap \PP_0$, $1 \leq i \leq r_1$  \\
 $\bullet$ $\nu^i_P(b_{i}) = \ell - 1 $ for all  $P \in  C_i \cap \PP_0$, 
 $ r_1 + 1 \leq i \leq r$  \\
$\bullet$    $\nu^i_{P}(b_{i} - 1) \geq n^P_i$ for all $P \in \PP_1  \cap C_i$ for all $i$. 
 \end{lemma}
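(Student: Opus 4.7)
The plan is to apply Proposition \ref{global_field} to the global field $\kappa(C_i)$ one $i$ at a time. For $1 \le i \le r_1$ we apply it with Kummer element $a(C_i) \in \kappa(C_i)^*$; for $r_1 < i \le r$ we apply it with $a(C_i)^{-1}$ and then raise the resulting element to the power $\ell-1$. In each case, the set $S$ is taken to be the set of discrete valuations of $\kappa(C_i)$ corresponding to $C_i \cap \PP_0$, $S'$ those corresponding to $C_i \cap \PP_1$ (with parameter $\max(n_i^P,2)$), and $\beta = \beta_{x_i}= \partial_{x_i}(\zeta) \in \Brl(\kappa(C_i))$.

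First I would verify the hypotheses of Proposition \ref{global_field}. The disjointness $S \cap S' = \emptyset$ is immediate from $\PP_0 \cap \PP_1 = \emptyset$. Because $\mathrm{ram}_\XX(\zeta) \subset \{C_1,\dots,C_r\}$ and the $C_j$'s intersect pairwise only at points of $\PP_0$, Corollary \ref{complex_P} shows that $\partial_P(\beta_{x_i}) = 0$ for any $P \in C_i$ not lying on another $C_j$; hence the ramification of $\beta_{x_i}$ is contained in $C_i \cap \PP_0 = S$. That $[a(C_i))$, respectively $[a(C_i)^{-1})$, is unramified at each $P \in S$ follows by specializing at $\pi_i$ the first bullet of Lemma \ref{choice_of_a} (which states that $[a)$ is unramified on the two-dimensional semi-local ring $A$). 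The residue identity $\partial_P(\beta_{x_i}) = [a(C_i)(P))^{\pm 1}$ is precisely the content of the second, third, and fourth bullets of Lemma \ref{choice_of_a}: at a point $P \in C_i \cap C_j \cap \PP_0$ the sign matches the case distinction $i \le r_1$ versus $i > r_1$, and at a point $P \in C_i \cap \PP_0$ not lying on any other $C_j$ both sides vanish. For $\ell = 2$, the remaining real-place hypothesis of Proposition \ref{global_field} reduces to $\beta_{x_i}\otimes \kappa(C_i)_\nu = 0$ at every real place $\nu$ of $\kappa(C_i)$: indeed $a$ is a sum of two squares in $A$ by the fifth bullet of Lemma \ref{choice_of_a}, so $a(C_i)$ is nonnegative at every real place of $\kappa(C_i)$, forcing $\kappa(C_i)_\nu(\sqrt{a(C_i)}) = \kappa(C_i)_\nu$. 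The required vanishing then follows from the standing hypothesis $\zeta \otimes_F (F\otimes_K K_{\nu'}) = 0$ at the real place $\nu'$ of $K$ below $\nu$, together with compatibility of the residue map $\partial_{x_i}$ with base change to the real completion.

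For $1 \le i \le r_1$, Proposition \ref{global_field} directly produces $b_i \in \kappa(C_i)^*$ with $\beta_{x_i} = [a(C_i), b_i)$, $\nu_P^i(b_i) = 1$ for $P \in C_i \cap \PP_0$, and $\nu_P^i(b_i - 1) \ge n_i^P$ for $P \in C_i \cap \PP_1$. For $r_1 < i \le r$, Proposition \ref{global_field} first supplies $c_i \in \kappa(C_i)^*$ with $\beta_{x_i} = [a(C_i)^{-1}, c_i)$, $\nu_P^i(c_i) = 1$ on $C_i \cap \PP_0$, and $\nu_P^i(c_i - 1) \ge n_i^P$ on $C_i \cap \PP_1$; I then set $b_i := c_i^{\ell-1}$. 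The identity
\[
[a(C_i), b_i) \;=\; (\ell-1)\,[a(C_i), c_i) \;=\; -[a(C_i), c_i) \;=\; [a(C_i)^{-1}, c_i) \;=\; \beta_{x_i}
\]
gives the required symbol presentation, while $\nu_P^i(b_i) = (\ell-1)\,\nu_P^i(c_i) = \ell-1$ on $C_i \cap \PP_0$ and $\nu_P^i(b_i - 1) \ge \nu_P^i(c_i - 1) \ge n_i^P$ on $C_i \cap \PP_1$ (using that $c_i - 1$ divides $c_i^{\ell-1} - 1$ in the valuation ring). Finally, these valuation conditions imply that $b_i$ has nonnegative valuation at every closed point of $\PP \cap C_i = (\PP_0 \cup \PP_1)\cap C_i$, so $b_i \in B/(\pi_i) \subset \kappa(C_i)$.

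The only delicate point, and therefore the main obstacle, is the real-place verification when $\ell = 2$: one must transport the hypothesis that $\zeta$ vanishes on $F \otimes_K K_{\nu'}$ at each real place $\nu'$ of $K$ into vanishing of the residue $\beta_{x_i}$ at each real place of $\kappa(C_i)$ above $\nu'$. Once this is in place, the proof is a bookkeeping application of Proposition \ref{global_field} to the $r$ global fields $\kappa(C_i)$ together with the cosmetic $(\ell-1)$-st power trick that converts the inverted residue data back into a genuine symbol of the form $[a, b_i)$.
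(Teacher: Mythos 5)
Your proposal is correct and follows essentially the same route as the paper: apply Proposition~\ref{global_field} over each global field $\kappa(C_i)$ with $S = C_i \cap \PP_0$ and $S' = C_i \cap \PP_1$, using Lemma~\ref{choice_of_a} and Corollary~\ref{complex_P} to verify the residue hypothesis, and use the $(\ell-1)$-st power trick to handle the inverted sign for $r_1 < i \leq r$. The one cosmetic difference is that for $i > r_1$ the paper invokes Proposition~\ref{global_field} on $\beta_{x_i}^{-1}$ keeping $a_i$ as Kummer element, whereas you invoke it on $\beta_{x_i}$ with Kummer element $a_i^{-1}$; since $[a_i^{-1}, c) = -[a_i, c)$ these are the same computation.
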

 
 \begin{proof}  Let  $ 1 \leq i \leq r$. 
  Let  $\beta_{x_i} = \partial_{x_i}(\zeta) \in H^2(\kappa(C_i), \Z/\ell(1))$
 and $a_i = a(C_i)$.

 Suppose $1 \leq i \leq r_1$. By  (\ref{choice_of_a}),
 $\partial_P(\beta_{x_i}) = [ a_i(P) )$ for all $P \in C_i  \cap \PP_0$.
 If   $P \not\in \PP_0$, then   $\partial_P(\beta_{x_i}) = 0$ for all $i$ (\ref{complex_P}). 
 By the assumption, $\beta_{x_i} \otimes \kappa(C_i)_\nu = 0$ for all real places $\nu$ of $\kappa(C_i)$. 
 Thus,   by (\ref{global_field}),  there exists $b_{i} \in  \kappa( C_i)^*$ such that 
     $\beta_{x_i} = [ a_i,    b_{i} )$, with $\nu^i_P(b_{i}) = 1$ for all  $P \in  C_i \cap \PP_0$  
and     $\nu^i_{P}(b_{i} - 1) \geq n^P_i$ for all $P \in  C_i \cap \PP_1$. 
In particular $b_i$ is regular at all $P \in C_i \cap \PP$ and hence  $b_i \in B/(\pi_i)$.

Suppose $ r_1 + 1 \leq i \leq r$.
Let $P \in C_i \cap \PP_0$. 
 Since $\partial_P(\beta_{x_i}) = [ a(P) )^{-1} $ for all $P \in C_i \cap \PP_0 $ (\ref{choice_of_a}), 
 $\partial_P(\beta_{x_i}^{-1}) = [a(P))$. Thus, as above, 
   by (\ref{global_field}),  there exists $c_{i} \in  B/(\pi_i)$ such that 
     $\beta_{x_i}^{-1} = [a_i,    c_{i} )$, with $\nu^i_P(c_{i}) = 1$ for all  $P \in  C_i \cap\PP_0$
and     $\nu^i_{P}(c_{i} - 1) \geq n^P_i$ for all $P \in  C_i \cap \PP_1$. 
Let $b_i = c_i^{\ell-1} \in B/(\pi_i)$.  Then $\beta_{x_i} = [a_i, b_i)$. Let $P \in C_i \cap \PP_1$. 
Since $c_i \in B/(\pi_i)$ and $\nu^i_P(c_i -1) \geq n^P_i$, 
it follows tat $\nu^i_P(b_i - 1) \geq n^P_i$. 
Thus $b_i$ has the required properties.
 \end{proof}

 Let $\delta = \prod \delta_j \in B$. For $1 \leq i \leq r$, 
   let  $\overline{\delta}(i) \in B/(\pi_i)$ be the image of $\delta$.
   Let  $d$ be an integer greater than  $\nu_P^i(\overline{\delta}(i)) + 1$ for all  $i$ and 
   $P \in C_i \cap \PP$.

 \begin{lemma}
 \label{choice_of_b}   
 Let   $b_i \in B/(\pi_i) $ be as in (\ref{local_choice_of_b}) for  $n_i^P = d$ for all $P \in C_i \cap \PP$. 
 Then  there  exists $b \in B$  such that \\
 $\bullet$ $b = b_i$ modulo $\pi_i$  for all $i$\\
 $\bullet$  $b = 1$ modulo $\delta_j$ for all  $j$ \\
 $\bullet$ $b$ is a unit at all $P \in \PP_1$.
 \end{lemma}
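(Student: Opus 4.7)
The plan is to construct $b$ via the Chinese remainder-type Lemma~\ref{saltman_exact_seq} applied to the family of distinct principal ideals $(\pi_1),\dots,(\pi_r),(\delta_1),\dots,(\delta_d)$ in $B$, with prescribed residues $b_i\bmod\pi_i$ and $1\bmod\delta_j$. Because the curves $C_i$ and $L_j$ are pairwise distinct and their pairwise intersections are finite sets of closed points all contained in $\PP$, each pairwise sum of two of these ideals is either the unit ideal of $B$ or the maximal ideal at a single point of $\PP$; this supplies the comaximality hypothesis of Lemma~\ref{saltman_exact_seq}.

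The substance of the argument is to verify that the prescribed data is compatible at each intersection point, i.e., maps to $0$ in
\[
\bigoplus_{i<j} B/\bigl((\pi_i)+(\pi_j)\bigr) \;\oplus\; \bigoplus_{i,j} B/\bigl((\pi_i)+(\delta_j)\bigr) \;\oplus\; \bigoplus_{j<k} B/\bigl((\delta_j)+(\delta_k)\bigr).
\]
There are three cases to check. First, at $P\in C_i\cap C_j$ with $i<j$: by the construction of $\XX$ we have $i\le r_1<j$, and Lemma~\ref{local_choice_of_b} gives $\nu_P^i(b_i)=1$ and $\nu_P^j(b_j)=\ell-1$, both positive, so $b_i(P)=0=b_j(P)$ in $\kappa(P)$. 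Second, at $P\in C_i\cap L_j$: such a $P$ lies in $\PP_1$, and applying Lemma~\ref{local_choice_of_b} with $n_i^P=d\ge 1$ gives $\nu_P^i(b_i-1)\ge 1$, hence $b_i(P)=1$, matching the residue $1\bmod\delta_j$. Third, at $P\in L_j\cap L_k$ with $j\ne k$, both prescribed residues equal $1$ and agree trivially.

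Having established compatibility, Lemma~\ref{saltman_exact_seq} yields $b\in B$ with $b\equiv b_i\pmod{\pi_i}$ for all $i$ and $b\equiv 1\pmod{\delta_j}$ for all $j$. For the unit condition, any $P\in\PP_1$ lies on some $L_j$, so $b\equiv 1\pmod{\delta_j}$ forces $b(P)=1$ in $\kappa(P)$, and $b$ is a unit at $P$. The main (and essentially only) subtle point is Case~1, which uses in an essential way the partition of $\{C_1,\dots,C_r\}$ into the two disjoint classes $\{C_i:i\le r_1\}$ and $\{C_i:i>r_1\}$ built into the original choice of the regular proper model $\XX$; without that partition, one could not guarantee that the valuations of $b_i$ and $b_j$ at a common point $P$ would both be positive.
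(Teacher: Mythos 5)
Your decomposition into the ideals $(\pi_1),\dots,(\pi_r),(\delta_1),\dots,(\delta_d)$ differs from the paper's, which takes $I_1=(\pi_1),\dots,I_r=(\pi_r)$ together with a single extra ideal $I_{r+1}=(\delta)$ where $\delta=\prod_j\delta_j$. This is not a cosmetic difference, and your version introduces two genuine gaps.

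First, Lemma~\ref{saltman_exact_seq} requires the ideals $I_{ij}=I_i+I_j$ to be pairwise comaximal, which forces every closed point of $\PP$ to lie on at most two of the curves in the family. With your finer family you would need, e.g., no three of the $L_j$ to pass through a common point, and no point to lie on a $C_i$ and on two distinct $L_j$'s; nothing in the construction rules these out. (The $C_i$'s meet pairwise in at most one point by the choice of $\XX$, and $L_j\cap\PP_0=\emptyset$ keeps the $L$'s away from the $C_i\cap C_j$ points, but that is all.) The paper avoids this entirely by lumping all the $L_j$'s into the single ideal $(\delta)$, after which one only has to check that $(\pi_i,\pi_j)$ and $(\pi_{i'},\delta)$ are supported at disjoint sets of points, which follows from $\PP_0\cap\PP_1=\emptyset$.

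Second, and more fundamentally, your compatibility check at $P\in C_i\cap L_j$ is carried out only in the residue field $\kappa(P)$: you observe $\nu^i_P(b_i-1)\geq 1$, hence $b_i(P)=1$. But the exactness in Lemma~\ref{saltman_exact_seq} is with respect to the quotient $B/\bigl((\pi_i)+(\delta_j)\bigr)$, which equals $\kappa(P)$ only if $C_i$ and $L_j$ meet transversally at $P$, i.e., if $(\pi_i,\delta_j)=M_P$. The curves $L_j$ are merely components of the support of $a$ or of $\frac{a-1}{(\rho-1)^\ell}$; they need not be regular, let alone transverse to $C_i$. So $(\pi_i)+(\delta_j)$ may be a proper subideal of $M_P$, in which case equality in $\kappa(P)$ is strictly weaker than what is needed. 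This is precisely why $d$ is chosen larger than $\nu^i_P(\overline{\delta}(i))+1$: the paper shows $(\pi_i,\prod_P\pi_{i,P}^d)\subset(\pi_i,\delta)$ and uses $\nu^i_P(b_i-1)\geq d$ to deduce $b_i\equiv 1$ in $B/(\pi_i,\delta)$, not merely in $\kappa(P)$. Your proof never uses that $d$ is large---a telling sign of the gap. Your Case~1 argument at $C_i\cap C_j$ and the observation that the partition $\{i\le r_1\}$ vs.\ $\{i>r_1\}$ is what makes both valuations positive there are correct and do match the paper, but the treatment of the $\delta$'s needs to be redone along the paper's lines.
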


\begin{proof}  
For $ 1\leq i \leq r$,  let $I_i = (\pi_i) \subset B$ and 
$I_{r+1} = ( \delta)  \subset B$.
Clearly the gcd$(\pi_i, \pi_j) = 1$  and 
gcd$(\pi_i,  \delta) = 1$ for all $1 \leq i < j \leq r$.
 For $1 \leq i < j \leq r$, 
$I_{ij} = I_i + I_j$ is either maximal ideal or equal to $B$. For $1 \leq i \leq r$,  we have $I_{i(r+1)} = (\pi_i, \delta)$.
Since $L_s \cap \PP_0 = \emptyset$ for all $s$, $(\delta_s,  \pi_i, \pi_j) = A$ for all $1 \leq i < j \leq r$ and for all $s$.
Thus the ideals $I_{ij}$, $ 1 \leq i < j \leq r+1$, are coprime. 
Let $b_{r + 1} = 1 \in B/(I_{r  +1})$.

Let $1 \leq i < j \leq r$.  Suppose $(\pi_i, \pi_j) \neq B$. Then $(\pi_i, \pi_j)$ is a maximal 
ideal of $B$ corresponding to a point $P \in C_i \cap C_j$. Since $P \in \PP_0$, 
by the choice of $b_i$  and $b_j$ (cf. \ref{choice_of_b}), we have $\nu^i_P(b_i) = 1$, $\nu^i_P(b_j) 
= \ell-1$ and hence $b_i = b_j = 0 \in B/(\pi_i, \pi_j) = B/I_{ij}$.  

Suppose $I_{i(r +1)} \neq B$ for some $1 \leq i \leq r$.  Then we claim that 
$b_i = 1 \in B/I_{i(r+1)}$. 
For each $P \in L_j \cap C_i$, let $M_P$ be the maximal ideal of $B$ at $P$. 
Since $\XX$ is regular and $C_i$ is  regular on $\XX$, we have 
$M_P = (\pi_i,  \pi_{i,P})$ for some $\pi_{i,P} \in M_P$ and the image of $\pi_{i,P}$ in $B/(\pi_i)$ is
a parameter at the discrete valuation $\nu_P^i$. Since $ d > \nu_P^i(\overline{\delta}(i)) $, 
we have $(\pi_i,  \prod \pi_{i,P}^d) \subset (\pi_i, \delta) = I_{i(r+1)}$. 
Since $B/(\pi_i, \prod \pi_{i,P}^d) \simeq \prod_P B/(\pi_i, \pi_{i,P}^d)$ and $\nu_P^i(b_i -1) \geq d$, 
we have $b_i = 1 \in B/(\pi_i, \prod \pi_{i,P}^d)$. Since  $B/I_i + I_{r +1}$ is a quotient of $B/I_i + (\prod_P \pi_{i,P})^d$, 
it follows that  $b_i = b_{r +1} = 1 \in B/I_i + I_{r+1} = B/I_{i(r+1)}$. 
 
Thus, by (\ref{saltman_exact_seq}), there exists $b \in B$ such that $b = b_i \in B/(\pi_i)$ for all $i$ 
and $b = 1 \in B/I_{r+1}$.   Since  $I_{r+1} =  ( \delta)  \subset (\delta_j)$  and $b =  1 \in B/(\delta)$, 
we have $b = 1 \in A/(\delta_j)$ for all $j$.  Let $P \in \PP_1$.  Then $P \in L_j$ for some $j$. 
Since $b = 1 \in B/(\delta_j)$, $b$ is a unit at $P$. 
Thus  $b$ has all the required properties. 
\end{proof}

 \begin{lemma}
 \label{choice_of_alpha} Let $a$ be as in (\ref{choice_of_a}) and $b$ as in (\ref{choice_of_b})
 and $\alpha = [a, b )$.    Then $\alpha$ is unramified at all $C_i$,  $L_j$ 
and at all   $Q \in \PP_1 $.
 Further $\partial _{C_i}(\zeta)$ is the specialization of $\alpha$ at 
$C_i$  for all $1 \leq i \leq r$.
 \end{lemma}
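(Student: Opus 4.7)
The statement asserts three unramifiedness claims together with a compatibility of specialization at each $C_i$. The plan is to combine three ingredients already at hand: (I) the cyclic extension $[a)$ is unramified at every codimension-one point of $\XX$ outside $\{L_1,\dots,L_d\}$, which follows directly from the definition of the $L_j$'s together with the criteria in Lemmas~\ref{epp} and~\ref{epp_general} (so that $a$ being a unit and $\frac{a-1}{(\rho-1)^\ell}$ being regular at such a point both cases yield unramifiedness); (II) the congruence conditions on $b$ afforded by Lemma~\ref{choice_of_b}, namely $b \equiv b_i \pmod{\pi_i}$, $b \equiv 1 \pmod{\delta_j}$, and $b$ is a unit at every $Q \in \PP_1$; and (III) Lemma~\ref{properties_of_a2}, which gives $C_i \neq L_j$ for all $i,j$ and $\mathrm{char}(\kappa(L_j)) \neq \ell$.

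First I will handle the claims at each $C_i$. By (III) and (I), $[a)$ is unramified at $C_i$. The class $b_i \in B/(\pi_i)$ is a nonzero element of $\kappa(C_i)^*$ (the symbol $[a_i, b_i) = \beta_{x_i}$ in Lemma~\ref{local_choice_of_b} requires $b_i \neq 0$), so (II) forces $\nu_{C_i}(b) = 0$ and identifies the image of $b$ in $\kappa(C_i)$ with $b_i$. Hence $\alpha = [a,b)$ is unramified at $C_i$, and its specialization there is $[a(C_i), b_i) = [a_i, b_i)$, which equals $\partial_{C_i}(\zeta)$ by Lemma~\ref{local_choice_of_b}.

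Next, at $L_j$ the tame symbol formula applies since $\mathrm{char}(\kappa(L_j)) \neq \ell$ by (III). Using $\nu_{L_j}(b) = 0$ and $\bar{b} = 1$ in $\kappa(L_j)$ from (II), one computes
\[
\partial_{L_j}([a,b)) = (-1)^{\nu_{L_j}(a)\cdot 0}\,\bar{a}^{0}\,\bar{b}^{-\nu_{L_j}(a)} = 1.
\]
Finally, for $Q \in \PP_1$, I will apply Lemma~\ref{2dim_unramified}: since $b$ is a unit in $A_Q$, necessarily $\nu_E(b) = 0$ at every height-one prime $E$ of $A_Q$ (otherwise $b$ would lie in the maximal ideal). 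If $E = C_i$ or $E = L_j$ through $Q$, unramifiedness has just been shown. Otherwise $E \notin \{L_s\}_{s=1}^d$, so $a$ is a unit at $E$ and $[a)$ is unramified there by (I); moreover $\supp_{\XX}(\ell) \subset \{C_1,\dots,C_r\}$ forces $\mathrm{char}(\kappa(E)) \neq \ell$, so the tame symbol formula again yields $\partial_E([a,b)) = 1$ because both $\nu_E(a)$ and $\nu_E(b)$ vanish.

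The argument is largely bookkeeping once (I)--(III) are in hand; the only step requiring any care is noting that a curve $E$ through $Q \in \PP_1$ need not belong to the listed families $\{C_i\}$ or $\{L_j\}$, but the unit condition on $b$ at $Q$ is exactly what allows the residue formula to vanish for every such extraneous component.
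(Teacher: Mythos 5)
Your proposal is correct and follows essentially the same three-way decomposition as the paper (handling $C_i$, $L_j$, and $Q\in\PP_1$ in turn from the chosen properties of $a$ and $b$). The only minor variation is at $L_j$: you compute the tame residue directly, whereas the paper notes that $\bar b=1$ and $\mathrm{char}(\kappa(L_j))\neq\ell$ make $b$ an $\ell$-th power in $F_{L_j}$, so $\alpha\otimes F_{L_j}=0$ (a slightly stronger conclusion) — both arguments are valid and equivalent in effect.
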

 
 \begin{proof} Since $[a )$ is unramified at $C_i$   (\ref{choice_of_a}) 
 and $b$ is a   unit at $C_i$ for all $i$ (\ref{choice_of_b}), $\alpha$ is  unramified at $C_i$
 and the specialization of $\alpha$ at $C_i$ is $[a(C_i), b_i) = \partial_{C_i}(\zeta)$ (\ref{local_choice_of_b}, 
 \ref{choice_of_b}). 
 Since char$(\kappa(L_j)) \neq \ell$ (\ref{properties_of_a2}) and  $b = 1$ modulo $\delta_j$ (\ref{choice_of_b}), 
 $b$ is an $\ell^{\rm th}$ power in  $F_{L_j}$ and hence $\alpha \otimes F_{L_j} = 0$.
 In particular $\alpha$   is unramified at $L_j$.
 
Let   $Q \in  \PP_1$. Then $b$ is  a unit at $Q$ (\ref{choice_of_b}). 
Let $x$ be a dimension one point of Spec$(B_Q)$. 
Then $b$ is a unit at $x$. If  $[ a )$ is unramified at $x$, then $\alpha$ is unramified at  $x$.
Suppose $[ a )$ is ramified at $x$. Then, by the choice of $L_j$'s,  $x$ is the generic point of $L_j$ for some $j$ and hence 
$\alpha$ is unramified at $x$. Thus $\alpha$ is unramified at $Q$ (cf. \ref{2dim_unramified}).
 \end{proof}

\begin{prop}
\label{prop}
The triple $(\XX,  \zeta, [ a, b ))$ satisfies the assumptions \ref{assumptions2}.
 \end{prop}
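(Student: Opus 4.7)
The plan is to verify the eight conditions B1)--B8) of \ref{assumptions2} one by one for the triple $(\XX, \zeta, [a,b))$ produced in the preceding lemmas. Most conditions are essentially immediate from the construction; the only one that requires a genuine computation is B8), which describes the local shape of the ramification curves of $\alpha$ at the closed points of $C_i \cap C_j$.

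Conditions B1) and B2) follow from the choice of $\XX$ (the $C_i$'s are regular irreducible curves with normal crossings, containing $\mathrm{ram}_\XX(\zeta) \cup \supp_\XX(\ell)$) together with (\ref{choice_of_alpha}), which shows that $\alpha$ is unramified at each $C_i$; we take $\{D_1,\dots,D_n\}=\mathrm{ram}_\XX(\alpha)$, which is automatically disjoint from $\{C_i\}$. Conditions B5), B6), and B7) come directly from (\ref{choice_of_alpha}), the hypothesis on $\zeta$, and the last bullet of (\ref{choice_of_a}), respectively. For B3), if $P \in D_s \cap C_i \cap C_j$ with $\mathrm{char}(\kappa(D_s)) = \ell$, then $D_s \in \supp_\XX(\ell) \subseteq \{C_1,\dots,C_r\}$, contradicting $D_s \neq C_k$ from B2). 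For B4), if $P \in D_j$ with $\mathrm{char}(\kappa(P)) = \ell$, then $P$ lies on some $C_i$ because $\supp_\XX(\ell) \subseteq \{C_1,\dots,C_r\}$; by (\ref{choice_of_alpha}), $\alpha$ is unramified at every point of $\PP_1$, while $\alpha$ is ramified along $D_j$, so $P \notin \PP_1$; therefore $P$ cannot lie on any $L_s$ (else $P \in L_s \cap C_i \subseteq \PP_1$), and consequently both $a$ and $\frac{a-1}{(\rho-1)^\ell}$ are regular at $P$.

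The main obstacle is B8). By the construction of $\XX$, for $i < j$ the intersection $C_i \cap C_j$ is nonempty only when $i \leq r_1 < j$, and in that case consists of a single closed point $P$. Fix such $P$, with $\pi_i, \pi_j \in A_P$ primes defining $C_i$ and $C_j$; they form a regular system of parameters, so $M_P = (\pi_i, \pi_j)$. By (\ref{local_choice_of_b}) and (\ref{choice_of_b}), $b \equiv b_i \pmod{\pi_i}$ with $\nu_P^i(b_i) = 1$ and $b \equiv b_j \pmod{\pi_j}$ with $\nu_P^j(b_j) = \ell - 1$. Since the image of $\pi_j$ is a uniformizer of $A_P/(\pi_i)$ and the image of $\pi_i$ is a uniformizer of $A_P/(\pi_j)$, these congruences let us write $b = v\pi_j + \pi_i g = u\pi_i^{\ell-1} + \pi_j h$ with $u, v$ units in $A_P$ and $g, h \in A_P$; comparing the two expressions forces $h \equiv v \pmod{\pi_i}$, so $h$ is also a unit, and $b = u\pi_i^{\ell-1} + h\pi_j$ with $u, h$ units. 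Examining $b$ modulo $M_P^2$ (when $\ell \geq 3$ the first summand lies in $M_P^2$ and the second contributes $h\pi_j \neq 0$; when $\ell = 2$ both summands survive and $\pi_i, \pi_j$ are $\kappa(P)$-linearly independent) shows that $b$ has order one at $P$, so $b$ is a regular prime in $A_P$ defining a single irreducible local curve $D$ through $P$. Since $\nu_{C_i}(b) = \nu_{C_j}(b) = 0$ (because $b_i, b_j$ are nonzero) and $[a)$ is unramified at $P$, any $D_s \in \mathrm{ram}_\XX(\alpha)$ through $P$ is not one of the $L_j$'s (so $[a)$ is unramified at $D_s$) and must have $\nu_{D_s}(b)$ coprime to $\ell$, forcing $D_s = D$. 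This yields both the uniqueness of $D_s$ through $C_i \cap C_j = \{P\}$ and its local defining equation $u\pi_i^{\ell-1} + h\pi_j$ of the form required in B8), completing the verification.
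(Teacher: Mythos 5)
Your proof is correct and follows essentially the same route as the paper: verify B1)--B7) by appealing to the constructions in \ref{choice_of_a}, \ref{choice_of_alpha}, and the choice of $\XX$ (with the $\supp_\XX(\ell)\subseteq\{C_i\}$ observation doing the work for B3) and B4)), then handle B8) by combining the two congruences $b\equiv b_i\ (\mathrm{mod}\ \pi_i)$, $b\equiv b_j\ (\mathrm{mod}\ \pi_j)$ coming from \ref{local_choice_of_b} and \ref{choice_of_b} to write $b=u\pi_i^{\ell-1}+h\pi_j$ with $u,h$ units at $P$. You spell out slightly more algebra in B8) than the paper does (in particular why $b$ is a regular prime and why the curve it cuts out is the only candidate for $D_s$), but the argument and its ingredients are the same.
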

 
 \begin{proof}  
By the choice of $\XX$, B1) of \ref{assumptions2} is satisfied.  
Let ram$_\XX(\alpha) = \{ D_1, \cdots , D_n \}$. 
Since $\alpha$ is unramified at all $C_i$ (\ref{choice_of_alpha}), B2) of \ref{assumptions2} is satisfied. 
Since supp$_\XX(\ell) \subset \{ C_1, \cdots , C_r \}$ and $D_i \neq C_j$ for all $i $ and $j$, char$(\kappa(D_i)) \neq \ell$ for all $i$
and hence  B3) of \ref{assumptions2} is satisfied.

Let $P \in D_j$ some $j$ with   char$(\kappa(P)) = \ell$. 
Since supp$_\XX(\ell) \subset \{ C_1, \cdots , C_r \}$,  $P \in C_i$ for some $i$.
Since $\alpha$ is unramified at all $Q \in \PP_1$ (\ref{choice_of_alpha}), $P \not\in \PP_1$.
Since $C_i \cap L_s \subset \PP_1$ for all $s$,   $P \not \in L_s$ for all $s$
and hence $\frac{a-1}{(\rho-1)^\ell} \in A_P$. Thus B4)  of \ref{assumptions2} is satisfied. 

Since   $\partial_{C_i}(\zeta)$ is the specialization of $\alpha$ at $C_i$ (\ref{choice_of_alpha}),
B5) of \ref{assumptions2} is satisfied. 

By the assumption on $\zeta$, B6) of \ref{assumptions2} is satisfied.
 If $\ell = 2$,  then, by the choice of $a$ (\ref{choice_of_a}), B7)  of \ref{assumptions2} is satisfied. 

 Let  $P \in C_i \cap C_j$ for some $i < j$. 
Then,  by the choice of $b_i$ and $b_j$ (\ref{local_choice_of_b}), 
we have $b_i = \overline{u}_j \overline{\pi}_j$ for some unit 
$u_j$ at $P$ and $b_j =  \overline{u}_i \overline{\pi}_i^{\ell-1}$ for some unit 
$u_i$ at $P$. Since $b = b_i$ modulo $\pi_i$ and $b = b_j $ modulo $\pi_j$, we have 
$b = v_{i}\pi_i^{\ell-1} + v_j \pi_j$ for some units $v_{i},  v_j$ at $P$. In particular $b$ is a regular
prime at $P$. 
Since   $[a )$ is unramified at $P$ (\ref{choice_of_a}) and $b$ being  a prime at $P$,  $\alpha$ is unramified at $P$ except possibly 
at $b$.  Thus there is at most one $D_s$ with $P \in D_s$ and such a $D_s$ is defined by 
$b = v_{i}\pi_i^{\ell-1} + v_j \pi_j$ for some units $v_{i},  v_j$ at $P$. In particular B8) of \ref{assumptions2} is satisfied.

\end{proof}

\begin{theorem}
\label{main}   Let $K$ be a global field or a local field and $F$  the function field of 
a curve over $K$.  Let $\ell$ be a prime not equal to the characteristic of $K$. 
Suppose that $K$ contains a primitive $\ell^{\rm th}$ root of unity. 
Let $\zeta \in H^3(F, \Z/\ell(2))$. Suppose that $\zeta \otimes_F(F\otimes_KK_\nu)$ is trivial 
for all real places $\nu$ of $K$.
Then    there exist $a, b, f \in F^*$ 
such that $\zeta = [a , b ) \cdot  (f) $. 
\end{theorem}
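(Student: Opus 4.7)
The plan is to combine the two main technical results assembled in the previous sections, namely Proposition \ref{prop} and Theorem \ref{localglobal}, which together essentially contain the entire content of the theorem.

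First, I would observe that the hypothesis on $\zeta$ is exactly what is needed to invoke Proposition \ref{prop}: the triviality of $\zeta \otimes_F (F \otimes_K K_\nu)$ at every real place $\nu$ of $K$ is required so that assumption B6) of Assumptions \ref{assumptions2} can be met, and the other ingredients of Proposition \ref{prop} (choice of a regular proper model $\mathcal{X}$ over $R$ with suitable support conditions on ramification of $\zeta$ and of $\ell$, the construction of $a$ via Lemma \ref{choice_of_a} so that $\frac{a-1}{(\rho-1)^\ell}$ lies in the appropriate semilocal ring and, when $\ell=2$, $a$ is a sum of two squares, and the construction of $b$ via Lemmas \ref{local_choice_of_b} and \ref{choice_of_b}) have been set up without further assumption on $F$. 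Applying Proposition \ref{prop} therefore yields elements $a, b \in F^*$ and a regular proper model $\mathcal{X}$ of $F$ over $R$ such that the triple $(\mathcal{X}, \zeta, [a, b))$ satisfies all eight conditions B1)–B8) of Assumptions \ref{assumptions2}.

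Next, I would feed this triple into Theorem \ref{localglobal}. That theorem, proved through a sequence of blow-up reductions (taking us from Assumptions \ref{assumptions2} to the stronger Assumptions \ref{assumptions}, using Lemmas \ref{blowups}, \ref{blowups1}, \ref{blowups2} together with the chilly-loop reduction \ref{chillyloops}), combined with the local–global construction of Theorem \ref{local_global} and Kato's vanishing result at the end, produces an element $f \in F^*$ with the property that $\zeta = [a, b) \cdot (f)$. Thus $\zeta$ is a symbol of the required form, and the theorem is proved.

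In this sense there is no real obstacle: all the hard work has already been done. The only thing worth checking carefully is that the primitive $\ell$-th root of unity hypothesis in Theorem \ref{main} (which is stated for $K$) propagates to $F$, which is automatic since $K \subset F$, and that every discrete valuation on $\kappa(x)$ for $x \in \mathcal{X}_{(1)}$ is centered on a closed point of $\overline{\{x\}}$ — this is a standard fact about global or local fields of dimension one and is the key geometric input used inside the proof of Theorem \ref{localglobal} to pass from the conclusion of Theorem \ref{local_global} to genuine unramifiedness on $\mathcal{X}$. Hence the proof of \ref{main} reduces to the single line: apply \ref{prop} to obtain $(a, b, \mathcal{X})$ satisfying Assumptions \ref{assumptions2}, then apply \ref{localglobal} to obtain $f \in F^*$ with $\zeta = [a, b) \cdot (f)$.
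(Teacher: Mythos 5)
Your proposal is correct and follows the paper's own proof exactly: invoke Proposition \ref{prop} to produce $a, b \in F^*$ and a regular proper model $\XX$ with $(\XX, \zeta, [a,b))$ satisfying Assumptions \ref{assumptions2}, then apply Theorem \ref{localglobal} to obtain $f \in F^*$ with $\zeta = [a,b)\cdot(f)$. The additional remarks you make (on B6), on the root-of-unity hypothesis, and on discrete valuations of $\kappa(x)$ being centered on $\overline{\{x\}}$) are accurate but already absorbed into the cited results.
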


 \begin{proof} By (\ref{prop}), there exist $ a, b  \in F^*$   and 
 regular proper model $\XX$ of $F$ such that the triple $(\XX, \zeta, \alpha)$ satisfy the assumption \ref{assumptions2}.
  Thus, by (\ref{localglobal}), there exists $f \in F^*$ such that  $\zeta = \alpha \cdot (f) = [a, b) \cdot (f)$. 
  \end{proof}

\begin{cor}
\label{imaginary}
 Let $K$ be a global field   or a local field and $F$ the function field of 
a curve over $K$.  Let $\ell$ be a prime not equal to the characteristic of $K$.
Suppose that $K$ contains a primitive $\ell^{\rm th}$ root of unity. 
Suppose that either  $\ell \neq 2$ or   $K$ has no real orderings.
Then  for every element $\zeta \in H^3(F, \Z/\ell(3))$, there exist $a, b, c \in F^*$ 
such that $\zeta = [a , b ) \cdot  (c ) $. 
\end{cor}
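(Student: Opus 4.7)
The plan is to derive Corollary \ref{imaginary} as a direct consequence of Theorem \ref{main}, by verifying that in both cases of the stated hypothesis, the condition ``$\zeta \otimes_F (F \otimes_K K_\nu) = 0$ for every real place $\nu$ of $K$'' holds automatically. Once this is done, Theorem \ref{main} produces $a, b, f \in F^*$ with $\zeta = [a, b) \cdot (f)$, and setting $c = f$ yields the desired symbol presentation.

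If $K$ has no real orderings---a situation that by definition includes every local field, every global function field of positive characteristic, and every totally imaginary number field---then $K$ has no real places at all, so the required vanishing is vacuous.

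Suppose instead that $\ell$ is odd. For any real place $\nu$ of $K$, the completion $K_\nu$ equals $\mathbb{R}$, and $F \otimes_K K_\nu$ is a finite product of fields $E_i$, each a finitely generated extension of $\mathbb{R}$ of transcendence degree one (factors over $\mathbb{C}$ may arise if the generic point splits after base change). Since $\ell$ is coprime to $|\mathrm{Gal}(\mathbb{C}/\mathbb{R})| = 2$, one has $\mathrm{cd}_\ell(\mathbb{R}) = 0$, and the standard bound on the cohomological dimension of finitely generated extensions gives $\mathrm{cd}_\ell(E_i) \leq 1$ for each factor. Consequently $H^3(E_i, \Z/\ell(2)) = 0$ for every $i$, and $\zeta \otimes_F (F \otimes_K K_\nu) = 0$ follows.

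With the hypothesis of Theorem \ref{main} verified in both cases, the corollary is immediate. There is no real obstacle here: Theorem \ref{main} does all the heavy lifting, and the content of the corollary is merely the observation that each of the two stated hypotheses eliminates the real-place obstruction in exactly the way required by the main theorem.
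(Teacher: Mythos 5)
Your derivation from Theorem \ref{main} is correct and is the intended proof: in both cases of the hypothesis, the real-place vanishing condition in Theorem \ref{main} is automatic, and the corollary follows immediately. One small observation: since $\mathbb{R}$ contains no primitive $\ell^{\rm th}$ root of unity for odd $\ell > 1$, the standing hypothesis that $K$ contains such a root of unity already forces $K$ to have no real orderings whenever $\ell \neq 2$; so your second case is in fact subsumed by the first, and the cohomological-dimension argument (via $\mathrm{cd}_\ell(\mathbb{R}) = 0$ and the transcendence-degree bound), while correct, is never invoked in a nonvacuous way.
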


  \section{Applications}
  
  In this section we given some applications of our main result to quadratic forms and  Chow group of 
  zero-cycles. 
  
  Let $K$ be a field of characteristic not equal to 2. Let $W(K)$ denote the Witt group of 
  quadratic forms over $K$ and $I(K)$ the fundamental ideal of $W(K)$ consisting of 
  classes of even dimensional forms (cf. \cite[Ch. 2]{Sc}). For $n \geq 1$, let $I^n(K)$ denote the 
  $n^{\rm th}$ power of $I(K)$.    For $a_1, \cdots , a_n \in F^*$, let $<<a_1, \cdots , a_n>>$ denote the
  $n$-fold Pfister form $<\!1, -a_1\!>\otimes  \cdots \otimes <\!1, -a_n\!>$ (cf. \cite[Ch. 4]{Sc}.

  \begin{theorem}
  \label{3fold} 
  Let $k$ be a totally imaginary  number field   and $F$ the function field of 
a curve over $k$. Then every element in $I^3(F)$ is represented by a 3-fold Pfister form. 
In particular if the class of a quadratic form $q$ is in $I^3(F)$ and dimension of $q$ is at least 
9, then $q$ is isotropic.  
  \end{theorem}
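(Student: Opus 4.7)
The plan is to combine the main theorem (Corollary \ref{imaginary} with $\ell = 2$) with the Milnor conjecture (theorem of Orlov--Vishik--Voevodsky) identifying $I^n(F)/I^{n+1}(F)$ with $H^n(F, \mu_2^{\otimes n})$, where a Pfister form $<\!<a_1, \dots, a_n>\!>$ corresponds to the symbol $(a_1) \cdots (a_n)$.

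First I would establish that $I^4(F) = 0$. Since $k$ is a totally imaginary number field, $\mathrm{cd}_2(k) = 2$, and since $F$ has transcendence degree $1$ over $k$, we have $\mathrm{cd}_2(F) \leq 3$. Consequently $H^n(F, \mu_2^{\otimes n}) = 0$ for all $n \geq 4$, and by the Milnor conjecture $I^n(F)/I^{n+1}(F) = 0$ for all $n \geq 4$. The Arason--Pfister Hauptsatz then forces $I^4(F) = 0$: any nonzero element would have an anisotropic representative of dimension at least $2^m$ for every $m \geq 4$, which is absurd.

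Next, given $\xi \in I^3(F)$, consider its image $\bar\xi \in I^3(F)/I^4(F) \simeq H^3(F, \mu_2^{\otimes 3})$. Since $k$ has no real orderings ($k$ is totally imaginary) and $2 \neq \mathrm{char}(k) = 0$, Corollary \ref{imaginary} applies and gives $a, b, c \in F^*$ with $\bar\xi = (a) \cdot (b) \cdot (c)$. Under the Milnor isomorphism, this symbol is the image of the $3$-fold Pfister form $<\!<a,b,c>\!> \in I^3(F)$, so $\xi - <\!<a,b,c>\!> \in I^4(F)$. Since $I^4(F) = 0$, this difference vanishes in $W(F)$, proving that $\xi$ is represented by the Pfister form $<\!<a,b,c>\!>$.

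For the isotropy statement, if $q$ has class in $I^3(F)$ and $\dim q \geq 9$, then by what we just proved $q$ is Witt-equivalent to some $3$-fold Pfister form $\varphi = <\!<a,b,c>\!>$ of dimension $8$. Hence the anisotropic part of $q$ has dimension at most $\dim \varphi = 8 < 9 \leq \dim q$, so $q$ must be isotropic. The main obstacle is setting up the vanishing $I^4(F) = 0$ cleanly, which is where the totally imaginary hypothesis enters beyond what is already needed for Corollary \ref{imaginary}; the rest is a direct translation from Galois cohomology to quadratic forms via the Milnor conjecture.
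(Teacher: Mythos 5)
Your proof is correct and reaches the same conclusion as the paper, but by a slightly different citation route. The paper invokes \cite[Theorem 2]{AEJ} (Arason--Elman--Jacob, 1986), which is a pre-Milnor-conjecture result tailored to fields of $2$-cohomological dimension $\leq 3$: it directly packages both the vanishing $I^4(F)=0$ and the identification of $I^3(F)$ with $H^3(F,\mu_2^{\otimes 3})$ under that hypothesis, so that ``every class in $H^3$ is a symbol'' immediately yields ``every class in $I^3$ is represented by a $3$-fold Pfister form.'' You instead unfold the argument by hand: you get $I^4(F)=0$ from $\mathrm{cd}_2(F) \le 3$ together with the Milnor conjecture and the Arason--Pfister Hauptsatz, then lift the symbol decomposition of $\bar\xi \in I^3/I^4 \cong H^3$ back to a Pfister form in $W(F)$. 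This is logically equivalent, and your version is arguably more transparent since it makes the role of $I^4(F)=0$ explicit; the trade-off is that, as stated, you invoke the full Orlov--Vishik--Voevodsky theorem, whereas AEJ's result is an elementary low-degree statement independent of Voevodsky (and even the degree-$3$ case you actually need was settled by Rost and Merkurjev--Suslin before the general Milnor conjecture). Your derivation of the isotropy statement from the Pfister-form representation is also correct and matches the paper's intent.
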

  
  \begin{proof}  Since every element in $H^3(F, \Z/2(3))$ is a symbol (\ref{imaginary}) and
  cd$_2(F) \leq 3$,    it follows from (\cite[Theorem 2]{AEJ}) that every 
  element in $I^3(F)$ is represented by a 3-fold Pfister form (see the proof of \cite[Theorem 4.1]{PS1}).  
  \end{proof}

  \begin{prop} 
\label{rank10-general}Let $F$ be a field of characteristic not equal to 2 with cd$_2(F) \leq 3$
  Suppose that every element in $H^3(F, \Z/2(3))$ is a symbol. 
  If $q$ is a quadratic form over $F$ of dimension at least 5 and $\lambda \in F^*$, then 
  $q  \otimes <\!1, -\lambda\!>$ is isotropic.  
  \end{prop}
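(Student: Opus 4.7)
The plan is to reduce to the case $\dim q = 5$, lift the problem into $I^3(F)$ so that the $3$-fold Pfister representation theorem applies, and exploit $I^4(F) = 0$ (a consequence of $\mathrm{cd}_2(F) \leq 3$) to force $q \otimes \langle 1, -\lambda \rangle$ to be isotropic. We may assume $\dim q = 5$, since any $5$-dimensional subform of $q$ yields a subform of $q \otimes \langle 1, -\lambda \rangle$; and we may assume $\lambda \notin F^{*2}$, else $\langle 1, -\lambda \rangle$ is hyperbolic.

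Write $d = \det q$ and set $q' = q \perp \langle -d \rangle$; this $6$-dimensional form has trivial discriminant, hence lies in $I^2(F)$, and consequently $\varphi := q' \otimes \langle 1, -\lambda \rangle \in I^3(F)$ has dimension $12$. The argument used to prove Theorem~\ref{3fold} relies only on the hypotheses that every element of $H^3(F, \Z/2(3))$ is a symbol and $\mathrm{cd}_2(F) \leq 3$, and under these hypotheses yields that $\varphi$ is Witt-equivalent to a $3$-fold Pfister form $\tau$ and is isotropic. If $\tau$ is hyperbolic, then $\varphi$ itself is hyperbolic, forcing $q'$ to be hyperbolic over $L = F(\sqrt\lambda)$; since $q' = q \perp \langle -d \rangle$ has dimension $6$, this gives $q_L$ Witt index at least $2$, so $q_L$ is isotropic and hence $q \otimes \langle 1, -\lambda \rangle$ is isotropic as desired.

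In the remaining case, $\tau$ is anisotropic of dimension $8$, so $\varphi \cong \tau \perp 2\mathbb{H}$ as $12$-dimensional forms. Expanding $\varphi = q \otimes \langle 1, -\lambda \rangle \perp \langle -d, \lambda d \rangle$ and using that $\langle -d, \lambda d \rangle$ is anisotropic (as $\lambda \notin F^{*2}$), adding the hyperbolic form $\langle -d, \lambda d \rangle \perp \langle d, -\lambda d \rangle = 2\mathbb{H}$ to both sides and applying Witt cancellation yields the key isomorphism
\[
q \otimes \langle 1, -\lambda \rangle \;\cong\; \tau \perp \langle d, -\lambda d \rangle.
\]
Now, for every $b \in F^*$, the product $\tau \cdot \langle 1, -b \rangle = \tau \perp -b\,\tau$ lies in $I^4(F) = 0$ and is therefore hyperbolic; for the anisotropic Pfister form $\tau$, hyperbolicity of $\tau \perp -b\,\tau$ is equivalent to $b \in G(\tau) = D(\tau)$. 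Hence $D(\tau) = F^*$; in particular $-d \in D(\tau)$, so $\tau \perp \langle d \rangle$ is isotropic, and consequently $\tau \perp \langle d, -\lambda d \rangle \cong q \otimes \langle 1, -\lambda \rangle$ is isotropic.

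I expect the main obstacle to be the Witt-cancellation step producing the identity $q \otimes \langle 1, -\lambda \rangle \cong \tau \perp \langle d, -\lambda d \rangle$, which requires careful bookkeeping of anisotropic versus hyperbolic summands in the $12$-dimensional form $\varphi$. Once this is in place, the observation that $I^4(F) = 0$ renders every anisotropic $3$-fold Pfister form over $F$ universal---via the identity $G(\tau) = D(\tau)$ for Pfister forms---is formal and immediately completes the argument.
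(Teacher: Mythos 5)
Your approach shares the paper's opening steps---reduce to $\dim q = 5$, complete to a 6-dimensional $q'\in I^2(F)$, pass to $\varphi = q'\otimes\langle 1, -\lambda\rangle\in I^3(F)$, and apply the 3-fold Pfister representation theorem---but the endgame is genuinely different. The paper uses $\varphi_L = 0$ over $L = F(\sqrt\lambda)$ to write the Pfister form with $\langle 1, -\lambda\rangle$ as a slot, then uses $I^4(F)=0$ to rescale by $-cd$ and subtract off $\langle 1, -\lambda\rangle$, leaving a Witt class of dimension $6$, which forces isotropy of a $10$-dimensional representative. You instead apply Witt cancellation to get the form identity $q\otimes\langle 1, -\lambda\rangle\cong\tau\perp\langle d, -\lambda d\rangle$ and then exploit $I^4(F)=0$ to show that $\tau$ is universal, via roundness of Pfister forms ($D(\tau)=G(\tau)=F^*$). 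That is a valid alternative route, and the universality observation is a clean way to finish.

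However, your treatment of the case where $\tau$ is hyperbolic contains a genuine gap. You assert that $\varphi = q'\otimes\langle 1, -\lambda\rangle$ being hyperbolic forces $q'$ to become hyperbolic over $L$. This implication is false in general: the kernel of multiplication by $\langle 1, -\lambda\rangle$ on $W(F)$ is the image of the Scharlau transfer $s_*\colon W(L)\to W(F)$, which is not contained in the kernel of restriction $W(F)\to W(L)$ (those two subgroups sit at different spots of the exact Witt triangle for $L/F$). Already $\langle 1, \lambda\rangle\otimes\langle 1, -\lambda\rangle\cong 2\mathbb{H}$, while $\langle 1,\lambda\rangle$ restricts to $\langle 1,1\rangle$ over $L$, which need not be hyperbolic. (The subsequent inference from ``$q_L$ isotropic'' to ``$q\otimes\langle 1,-\lambda\rangle$ isotropic'' is likewise unjustified in general.) Fortunately the case split is unnecessary: your Witt-cancellation identity $q\otimes\langle 1, -\lambda\rangle\cong\tau\perp\langle d, -\lambda d\rangle$ holds verbatim when $\tau$ is hyperbolic (then $\tau\cong 4\mathbb{H}$, $\varphi\cong 6\mathbb{H}$, and cancelling $2\mathbb{H}$ from $\varphi\perp\langle d,-\lambda d\rangle\cong\tau\perp 2\mathbb{H}\perp\langle d,-\lambda d\rangle$ goes through unchanged), and the right-hand side is then visibly isotropic. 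Drop the case distinction and the argument is complete.
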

  
  \begin{proof}  
Without loss of generality we assume that dimension of $q$ is 5.
By scaling we also assume that $q = <\!-a, -b, ab, c, d\!>$ for some $a, b, c, d \in F^*$.
Let $q' = <\!-a, -b, ab, c, d, -cd\!> \otimes<\!1, -\lambda\!>$.
Since $ <\!-a, -b, ab, c, d, -cd\!> \in I^2(K)$ (\cite[p. 82]{Sc}),  $q' \in I^3(F)$.  
Hence, by (\ref{3fold}),  $q'$ is represented by 3-fold Pfister form. Since $q' \otimes F(\sqrt{\lambda}) = 0$, 
$q' =  <\! 1, -\lambda\!>\otimes<\!1, \mu\!>\otimes<\!1, \mu'\!>$ for some  $\mu, \mu' \in F^*$ (cf. \cite[p. 45, Theorem 5.2]{Sc},  
\cite[p. 143, Corollary 1.5]{Sc} and \cite[p. 144, Theorem   1.4]{Sc}).
Since $H^4(F, \Z/2(4)) = 0$, $I^4(F) = 0$ (\cite[Corollary 2]{AEJ}), 
 we have $q' = -cd<\! 1, -\lambda\!>\otimes<\!1, \mu\!>\otimes<\!1, \mu'\!>$. 

Thus we have 
$$
\begin{array}{rcl}
<\!-a, -b, ab, c, d\!>\otimes<\!1, -\lambda\!>  & = & -cd<\!1, -\lambda\!>\otimes<\!1, \mu\!>\otimes<\!1, \mu'\!> + <\!1, -\lambda\!> \\
& = & -cd<\!1 - \lambda\!>\otimes<\!\mu, \mu', \mu\mu'\!>.
\end{array}
$$
 In particular  $<\!-a, -b, ab, c, d\!>\otimes<\!1, -\lambda\!>$ is isotropic (\cite[p. 34]{Sc}). 
\end{proof}

  \begin{cor} 
  \label{rank10}
  Let $K$ be a totally imaginary number field and  $F$ the function field a  curve over $K$.
  Let $q$ be a quadratic forms over $F$ of dimension at least  5. Let $\lambda \in F^*$.
  Then the quadratic form $q \otimes <\!1, -\lambda\!>$ is isotropic. 
  \end{cor}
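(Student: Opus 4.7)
The plan is to deduce the corollary directly from Proposition \ref{rank10-general}, so the task reduces to verifying its two hypotheses for $F$ the function field of a curve over a totally imaginary number field $K$: namely that $\mathrm{cd}_2(F) \leq 3$ and that every element of $H^3(F, \Z/2(3))$ is a symbol.

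For the cohomological dimension bound, I would note that a totally imaginary number field $K$ has no real places, so $\mathrm{cd}_2(K) \leq 2$ by standard class field theory. Since $F$ is the function field of a curve over $K$, the transcendence degree is one, and hence $\mathrm{cd}_2(F) \leq \mathrm{cd}_2(K) + 1 \leq 3$. For the second hypothesis, I would invoke Corollary \ref{imaginary} with $\ell = 2$: the hypothesis ``$K$ is a global field with no real orderings'' is satisfied because $K$ is a totally imaginary number field, so every element of $H^3(F, \Z/2(3))$ is indeed a symbol.

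With both hypotheses of Proposition \ref{rank10-general} in hand, the conclusion that $q \otimes \langle 1, -\lambda\rangle$ is isotropic for any quadratic form $q$ of dimension at least $5$ and any $\lambda \in F^*$ is immediate. There is essentially no obstacle here: the entire content is packaged into the two earlier results, so the corollary is a one-line deduction once one has verified the cohomological dimension bound and cited \ref{imaginary}.
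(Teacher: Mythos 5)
Your proof is correct and follows the paper's own route: the paper also reduces the corollary to Proposition \ref{rank10-general} after noting $H^4(F,\Z/2(4))=0$ (your $\mathrm{cd}_2(F)\le 3$ bound, with the standard justification spelled out) and invoking Corollary \ref{imaginary} for the symbol statement.
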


\begin{proof}  Since $K$ is a totally imaginary number field and $F$ is a function field of a curve 
over $k$, we have $H^4(F, \Z/2(4)) = 0$. 
Since every element in $H^3(F,\Z/2(3))$ is a symbol (\ref{imaginary}),  $q\otimes <\!1, -\lambda\!>$ is
isotropic (\ref{rank10-general})
\end{proof}

  \begin{theorem} 
    \label{zero-cycles}
    Let $k$ be a totally imaginary number field and  $C$ a smooth projective geometrically integral 
  curve over $K$. Let $ \eta : X \to C$ be an admissible quadric fibration. If dim$(X) \geq 4$, 
  then    $CH_0(X)$ is a finitely generated abelian group.   
  \end{theorem}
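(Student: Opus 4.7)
The proof breaks into two steps: first establish the vanishing $CH_0(X/C) = 0$, and then deduce finite generation of $CH_0(X)$ from this together with the known structure of $CH_0(C)$.

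For the vanishing, I plan to invoke the identification of $CH_0(X/C)$ due to Colliot-Th\'el\`ene and Skorobogatov \cite{CTSk}. For an admissible quadric fibration $\eta \colon X \to C$ with $\mathrm{char}(K) \neq 2$, they describe $CH_0(X/C)$ as a specific $2$-torsion subquotient of $F^* = K(C)^*$ whose triviality is controlled by the isotropy of the generic quadratic form $q$ of $\eta$ after scaling by elements $\lambda \in F^*$. Since $\dim(X) \geq 4$, the form $q$ has rank at least $5$, so Corollary \ref{rank10} guarantees that $q \otimes <\!1, -\lambda\!>$ is isotropic over $F$ for every $\lambda \in F^*$. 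Feeding this input into the Colliot-Th\'el\`ene--Skorobogatov description collapses the subquotient, yielding $CH_0(X/C) = 0$.

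For the second step, since $C$ is a smooth projective geometrically integral curve over the number field $K$, the degree map together with the Mordell--Weil theorem applied to the Jacobian of $C$ imply that $CH_0(C) \simeq \Z \oplus \mathrm{Pic}^0(C)(K)$ is a finitely generated abelian group. Combining this with the tautological exact sequence $0 \to CH_0(X/C) \to CH_0(X) \to CH_0(C)$ (arising from the definition of $CH_0(X/C)$ as the kernel of the proper pushforward $\eta_*$) and the vanishing from the first step, one obtains an injection $CH_0(X) \hookrightarrow CH_0(C)$, whence $CH_0(X)$ is finitely generated.

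The main technical obstacle is the first step: matching the isotropy statement supplied by Corollary \ref{rank10} against the precise $2$-torsion subquotient of $F^*$ that Colliot-Th\'el\`ene and Skorobogatov extract as $CH_0(X/C)$. Once this translation is verified, the vanishing is immediate, and the deduction of finite generation is formal. This is exactly where the strength of the main theorem (\ref{main}), channeled through its quadratic-form incarnations \ref{3fold} and \ref{rank10-general} and then \ref{rank10}, is fed into the $0$-cycle calculation.
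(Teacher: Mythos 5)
Your proposal matches the paper's proof essentially line for line: the paper invokes the Colliot-Th\'el\`ene--Skorobogatov identification of $CH_0(X/C)$ as a subquotient of $k(C)^*/N_q(k(C))$, uses Corollary~\ref{rank10} to show that every $\lambda \in k(C)^*$ is a product of two values of $q$ (so $N_q(k(C)) = k(C)^*$ and hence $CH_0(X/C)=0$), and then deduces finite generation of $CH_0(X)$ from the injection into $CH_0(C)$ and the Mordell--Weil theorem. The ``translation'' you flag as the main obstacle is exactly this: isotropy of $q \otimes \langle 1,-\lambda\rangle$ means $\lambda$ is a ratio, hence a product, of two values of $q$, which is precisely what makes the relevant norm group all of $k(C)^*$.
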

 
  \begin{proof} Let $q$  be a quadratic form over $k(C)$ defining the generic fibre of $\eta : X \to C$.
  Let $N_q(k(C))$ be the subgroup of $k(C)^*$ generated by  $fg$ with $f, g \in k(C)^*$  represented by $q$. 
    Let $\lambda \in k(C)^*$. 
  Since dim$(X) \geq 4$,  the dimension of $q$ is at least 5.  Thus, by (\ref{rank10}), 
  $q \otimes <\!1, -\lambda\!>$ is isotropic. Hence $\lambda$ is a product of two values of $q$.
  In particular $\lambda \in N_q(k(C))$ and $k(C)^* = N_q(k(C))$. 
   
  Let $CH_0(X/C)$ be the kernel of the induced homomorphism $CH_0(X) \to CH_0(C)$. 
  Then,   by (\cite{CTSk}), $CH_0(X/C)$ is a sub quotient of the group $k(C)^*/N_q(k(C))$ and hence 
  $CH_0(X/C) = 0$.  In particular $CH_0(X)$ is isomorphic to a subgroup of $CH_0(C)$.
  Since, by a theorem of Mordell-Weil, $CH_0(C)$ is finitely generated, $CH_0(X)$ is finitely generated. 
    \end{proof}

\providecommand{\bysame}{\leavevmode\hbox to3em{\hrulefill}\thinspace}

\end{document}